\documentclass[12pt]{extarticle}
\usepackage{amsmath, amsthm, amssymb, hyperref, color}
\usepackage[shortlabels]{enumitem}
\usepackage{graphicx}
\usepackage[all]{xypic}
\usepackage{makecell}
\usepackage[final]{pdfpages}
\setboolean{@twoside}{false}
\usepackage{pdfpages}
\usepackage{caption}
\usepackage{subcaption}
\usepackage{scalefnt}
\usepackage{verbatim}
\usepackage{tikz}
\usepackage{forest}
\tolerance 10000
\headheight 0in
\headsep 0in
\evensidemargin 0in
\oddsidemargin \evensidemargin
\textwidth 6.5in
\topmargin .25in
\textheight 8.7in

\newtheorem{theorem}{Theorem}
\numberwithin{theorem}{section}
\newtheorem{proposition}[theorem]{Proposition}
\newtheorem{lemma}[theorem]{Lemma}
\newtheorem{corollary}[theorem]{Corollary}

\newtheorem{thmDef}[theorem]{Theorem/Definition}

\newtheorem{remark}[theorem]{Remark}
\newtheorem{example}[theorem]{Example}

\newcommand{\RR}{\mathbb{R}}

\newcommand{\PP}{\mathbb{P}}
\newcommand{\CC}{\mathbb{C}}
\newcommand{\ZZ}{\mathbb{Z}}
\newcommand{\CH}{\mathrm{CH}}
\newcommand{\Gr}{\mathrm{Gr}(1,\PP^3)}
\newcommand{\Sym}{\mathrm{Sym}}
\newcommand{\codim}{\mathrm{codim}\,}

\date{}
 
\title{\textbf{Changing Views on Curves and Surfaces}}

\author{Kathl\'en Kohn, Bernd Sturmfels and Matthew Trager}

\begin{document}

\maketitle

\begin{abstract}
\noindent Visual events in computer vision are studied from the perspective of
algebraic geometry. Given a sufficiently general curve or surface in
$3$-space, we consider the image or contour curve that arises by projecting
from a viewpoint. Qualitative changes in that curve occur when the viewpoint
crosses the visual event surface. We examine the components of this
ruled surface, and observe that these coincide with the iterated singular loci
of the coisotropic hypersurfaces associated with the original curve or surface. We
derive formulas, due to Salmon and Petitjean, for the degrees of these
surfaces, and show how to compute exact representations for all visual event
surfaces using algebraic methods.
\end{abstract}

\section{Introduction}
\label{sec1}

Consider a curve or surface in $3$-space, and pretend you are taking a picture
of that object with a camera. If the object is a curve, you see again a curve
in the image plane. For a surface, you see a region bounded by a curve, which
is called {\em image contour} or {\em outline curve}. The outline is the
natural sketch one might use to depict the surface, and is the projection of
the critical points where viewing lines are tangent to the surface. In both
cases, the image curve has singularities that arise from the projection,
even if the original curve or surface is smooth. Now, let your camera
travel along a path in $3$-space. This path naturally breaks up into
segments according to how the picture looks like. Within each segment, the
picture looks alike, meaning that the topology and singularities of the image
curve do not change.

The appearance of a solid object under a continuously varying viewpoint was
studied in the 1970s by Koenderink and van Doorn \cite{KD76}. Their motivation
came from visual perception in psychology and artificial intelligence.
Koenderink offers a detailed discussion in his remarkable book on {\em Solid
Shape} \cite{Koen}. On the mathematical side, the topic was studied in
singularity theory by Arnol'd and others~\cite{Arn,Ker,Plat}. In that setting,
the transitions between locally stable views are the non-generic singularities
from {\em catastrophe theory}. These catastrophes have been classified for
projection-generic surfaces. The catalogue consists of the following six {\em
visual events}. The first three names are due to Ren\'e Thom \cite{Thom}:

\begin{itemize}
\item[(L)]  Local events:   {\em lip},  {\em beak-to-beak},  and {\em
swallowtail}. \vspace{-0.1in}
\item[(M)] Multi-local events: {\em tangent crossing}, {\em cusp crossing},
and {\em triple point}.
\end{itemize}
In the 1980s, visual events became a research topic in computer vision
\cite{BD, Pae, PPK, Pon90}. Chapter~13 in the textbook by Forsyth and Ponce
\cite{ForPon} offers an introduction in that context.  The motivation in
computer vision was to give a description of all possible appearances of a
solid object using a finite number of stable views, or {\em aspects}. The
overall structure of aspects and events is encoded in the {\em aspect graph},
in which vertices correspond to aspects, and edges correspond to visual events
between stable views. Although these ideas never found much practical use,
several algorithms for computing aspect graphs of algebraic surfaces were
proposed. Test implementations involved both numerical and symbolic methods.
Ponce and Kriegman \cite{Pon90} and Rieger \cite{Rie92} studied the case of
orthographic projections of parametric algebraic surfaces. Methods for
implicit algebraic surfaces were introduced by~Petitjean {\it et
al}.~\cite{PPK} for orthographic projections, and by Rieger~\cite{Rie93} for
perspective projections. All examples shown in these articles are very special
low-degree surfaces. We here revisit this literature, now 25 years old, and
develop it further for today's applied algebraic geometry.

Our model for the object to be viewed is a smooth variety $X$ 
of dimension one or two in  complex projective space $\PP^3$. We assume that $X$ 
is defined over $\RR$ and the real locus $X_\RR$ is Zariski dense in $X$. Taking a
picture of $X_\RR$ is modeled by the linear projection $\pi:\PP^3
\dashrightarrow \PP^2$ with center $z$ (for {\em Zentrum}). This defines a
curve $C_z(X)$ in the image plane $\PP^2$. If $X$ is a curve, then $C_z(X)$ is
the closure of the image of $X$ under $\pi$. If $X$ is a surface, then
$C_z(X)$ is the branch locus of $\pi$ restricted to $X$. This is the closure
of the set of points in $\PP^2$ whose corresponding viewing lines are
tangent to $X$. Even though $X$ is smooth, the curve $C_z(X)$ has many singular
points. For a surface viewed from a general viewpoint $z$, the only
singularities in the contour are nodes and cusps. For a space curve, the image
curve has only nodes. As the center $z$ changes, the structure of its
singularities is locally constant. At some point, a transition occurs, and the
singularity structure changes. The {\em visual event surface} $\mathcal{V}(X)$
is the Zariski closure in $\PP^3$ of the set of these transition points. This
definition can be extended to singular curves and surfaces by excluding the
role of singular points on~$X$.

The visual event surface $\mathcal{V}(X)$ is usually reducible. If $X$ is a
general curve, then $\mathcal{V}(X)$ has three irreducible components. If $X$
is a general surface, then $\mathcal{V}(X)$ has five irreducible components.
These arise from the six events in (L) and (M) above. We shall explain
the geometry of these irreducible components, and we discuss how to compute
them in practice. An important caveat for applications is the distinction
between real and complex points. Algebraic methods  do not distinguish between
them. They apply to any complex curve or surface $X$ in  $\PP^3$. For any
particular $X$ that is defined over $\RR$, it can happen that some visual
events are not seen on its real points, i.e.,~they might live in the set $X \backslash
X_\RR$ of complex points.

The focus in this paper lies on curves and surfaces that are general in the
sense of algebraic geometry. Thus, for a surface $X$ in $\PP^3$ of degree $d$,
we assume that its equation is general among homogeneous polynomials of
degree $d$ in four variables. For a curve $X$ in $\PP^3$ of degree $d$ and
genus $g$, we assume that it is a general point in the Hilbert scheme of
such curves.

\smallskip

We now briefly describe the organization and main results in this article.
Section \ref{sec2} is devoted to ruled surfaces in $\PP^3$ and to its subclass
of developable surfaces. We introduce effective representations of ruled
surfaces, and we show how to compute with these. This is relevant because all
visual event surfaces are ruled. Their irreducible components are the ruled
surfaces in the bottom rows of Figures \ref{fig:curve-graph} and \ref{fig:surface-graph}. 
These arise as iterated singular loci of Chow and Hurwitz
threefolds in ${\rm Gr}(1,\PP^3)$, and of dual varieties in $(\PP^3)^*$.

In Section \ref{sec3} we develop the geometry of
visual event surfaces for curves in $\PP^3$. The three
irreducible components are the tangential surface, edge surface and trisecant surface.
These represent the three Reidemeister moves on the image curve, 
as shown in Figure \ref{fig:eins}. We present case studies that show
the computation of visual event surfaces for curves up to degree six.

Section \ref{sec4} concerns the visual event surface $\mathcal{V}(X)$ of a
general surface $X$ in $\PP^3$. The six events in (L) and (M) are depicted in
Figure \ref{fig:events}, which we discuss in detail. These events are
translated into the algebraic setting, where they correspond to the five
irreducible components of $\mathcal V(X)$. Their degrees are listed in Theorem
\ref{thm:surfaces}. These formulas were known classically: they appear in
paragraphs 597, 598, 599, 608 and 613 of Salmon's book \cite{Sal}. Modern
proofs were  given by  Petitjean~\cite{Pet}. In Section \ref{sec5} we present
new proofs, based on intersection theory in algebraic geometry, as seen in the
textbook by Eisenbud and Harris~\cite{EH}.

Section \ref{sec6} is devoted to practical methods for computing and
representing the visual events associated with a surface $X$ in $\PP^3$. This
is a non-trivial matter because the degrees of the ruled surfaces in the
output are very high, as seen in Table \ref{tab:surfaces}. For instance, if
$X$ is a quintic, then the degrees of the irreducible components of
$\mathcal{V}(X)$ range between $260$ and $930$.

\section{Ruled Surfaces and Developable Surfaces}

\label{sec2}

An irreducible surface in $\PP^3$ is ruled if it is covered by straight lines.
These lines are parameterized by some curve $C$, and they are known as the
{\em generators} of the surface. A first example are smooth quadratic surfaces
in $\PP^3$. These possess two rulings of lines over $\CC$. We refer to the
book by  Edge \cite{edge} for many classical results on ruled surfaces. In
this section we develop algebraic tools for computing and representing ruled
surfaces in practice.

Ruled surfaces arise naturally when taking pictures of an object in $3$-space.
We encounter them because all components of a visual event surface $\mathcal
V(X)$ are ruled. Indeed, every general point $z$ on $\mathcal{V}(X)$
determines a line of sight that has a special intersection with the curve or
surface $X$. Every point on the line shares this property with $z$ and hence
lies in $\mathcal{V}(X)$.

A key player is the Grassmannian of lines in $\PP^3$, here denoted  ${\rm Gr}(1,\PP^3)$. This is
a $4$-dimensional variety, embedded in $\PP^5$ via {\em Pl\"ucker
coordinates}. Every line is associated with a sextuple
$(p_{12}:p_{13}:p_{14}:p_{23}:p_{24}:p_{34})$ given by the $2 \times 2$-minors
of a $2 \times 4$-matrix whose kernel is the line. The Pl\"ucker coordinates satisfy the 
quadratic {\em Pl\"ucker relation}
\begin{equation}
\label{eq:pluckerrel}
p_{12} p_{34} - p_{13} p_{24} + p_{14} p_{23} \, = \,0.
\end{equation} 
The coordinate ring of ${\rm Gr}(1,\PP^3)$ is the polynomial ring in the six
unknowns $p_{ij}$ modulo the principal ideal generated by
(\ref{eq:pluckerrel}). We can also represent a line in $\PP^3$ by its {\em
dual Pl\"ucker coordinates} $(q_{12}: q_{13}: q_{14}: q_{23}: q_{24} :
q_{34})$. These are the $2 \times 2$-minors of a $2 \times 4$-matrix whose
rows are points that span the line. Primal and dual Pl\"ucker coordinates are
related by $q_{ij} = \sigma_{(ijkl)} p_{kl}$, where $i,j,k,l$ are distinct
indices, and $\sigma_{(ijkl)}$ is the sign of the permutation $(ijkl)$. 
Every line satisfying $p_{34} \not= 0$ has the parametric representation
\begin{equation}
\label{eq:lineparaone} 
z(t) \,\,= \,\,\bigl( \,-p_{34} \,:\, t p_{34} \,:\, p_{14}  - t p_{24} \,:\,  t p_{23}-p_{13} \,\bigr) .
\end{equation}
Consider now an irreducible curve $C$ in ${\rm Gr}(1,\PP^3)$ which has degree $d$ in $\PP^5$.
We write $I_C$ for its prime ideal in the coordinate ring.
The union of all lines on $C$ is a ruled surface $\mathcal{S}_C$ in~$\PP^3$.

\begin{lemma} \label{lem:everyruled}
The ruled surface $\mathcal{S}_C$ is irreducible and it has degree $d$ in $\PP^3$.
Conversely, every irreducible ruled surface in $\PP^3$ arises in this way from
some irreducible curve $C \subset {\rm Gr}(1,\PP^3)$.
\end{lemma}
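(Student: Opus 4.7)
The plan is to treat the two directions separately. For the forward direction, I would form the incidence variety
\[
\mathcal{I}_C \,=\, \bigl\{ (\ell, p) \in C \times \PP^3 \,:\, p \in \ell \bigr\}.
\]
The first projection $\mathcal{I}_C \to C$ is a $\PP^1$-bundle, so $\mathcal{I}_C$ is irreducible of dimension $2$, and the second projection identifies $\mathcal{S}_C$ as its image, hence irreducible. That $\dim \mathcal{S}_C = 2$ is forced, since a one-dimensional image would be a curve containing every line of $C$, which would compel $\mathcal{S}_C$ to be a single line and $C$ to be a point.

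To compute the degree I would intersect $\mathcal{S}_C$ with a generic line $L \subset \PP^3$. Every point of $\mathcal{S}_C \cap L$ arises as the unique intersection of $L$ with some generator $\ell \in C$ that meets $L$. Inside $\Gr$ the locus of lines meeting $L$ is the Schubert cycle $\sigma_1(L)$, cut out by a single linear form in Pl\"ucker coordinates, hence a hyperplane in $\PP^5$. By B\'ezout, $|C \cap \sigma_1(L)| = d$, and for generic $L$ these $d$ lines are distinct from $L$ and meet it transversely at distinct points, yielding $\deg \mathcal{S}_C = d$.

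For the converse, let $\mathcal{S} \subset \PP^3$ be an irreducible ruled surface and let $F_1(\mathcal{S}) \subset \Gr$ be its Fano scheme of contained lines. Since $\mathcal{S}$ is ruled, $\bigcup_{\ell \in F_1(\mathcal{S})} \ell = \mathcal{S}$, and by irreducibility of $\mathcal{S}$ some irreducible component $C_0 \subseteq F_1(\mathcal{S})$ already satisfies $\mathcal{S}_{C_0} = \mathcal{S}$. If $\dim C_0 = 1$, I would take $C = C_0$; otherwise (essentially only the case $\mathcal{S}$ is a plane, where $F_1 \cong (\PP^2)^*$ is two-dimensional), I would slice $C_0$ by $(\dim C_0 - 1)$ generic Pl\"ucker hyperplanes to obtain an irreducible curve $C \subseteq C_0$. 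The main obstacle I foresee is in this final step: verifying that the sliced family still sweeps out all of $\mathcal{S}$ rather than collapsing to a proper subvariety. This should follow from a dimension count on the incidence variety $\mathcal{I}_{C_0} \to \mathcal{S}$, combined with a Bertini-type irreducibility statement for generic linear sections in $\PP^5$.
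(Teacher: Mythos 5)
The paper does not actually prove this lemma: it simply cites Edge's book \cite[Chapter I, \S 26]{edge}. Your proposal supplies a genuine argument, and it is the standard one: the incidence variety $\mathcal{I}_C \to C$ is a $\PP^1$-bundle, giving irreducibility and dimension two, and the degree is read off by intersecting with a general line $L$ and identifying the generators meeting $L$ with the hyperplane section $C \cap \sigma_1(L)$ in the Pl\"ucker embedding. The converse via the Fano scheme is also fine; as you note, the only surface whose Fano scheme is two-dimensional is a plane, and there a pencil of lines already does the job, so the Bertini step you worry about is needed only in that trivial case.

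There is, however, one genuine gap in the degree computation, at the words ``the \emph{unique} intersection'' and ``at \emph{distinct} points.'' The map from $C \cap \sigma_1(L)$ to $\mathcal{S}_C \cap L$ is a bijection for general $L$ only if a general point of $\mathcal{S}_C$ lies on exactly one generator. If a general point lies on $k>1$ generators, the map is $k$-to-$1$ and $\deg \mathcal{S}_C = d/k$. This is not a phantom worry: take $C$ to be the tangent lines of a smooth conic inside a fixed plane $H$. This is an irreducible conic in a $\beta$-plane of $\Gr$, so $d=2$, yet $\mathcal{S}_C = H$ has degree $1$, because every point of $H$ lies on two generators. So the statement, and your proof of it, implicitly assume that the ruling is generically simple (equivalently, that $\mathcal{I}_C \to \mathcal{S}_C$ is birational); this is the convention under which Edge's ``order'' of the curve on the Klein quadric equals the order of the surface. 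A second, smaller point you should make explicit: the hyperplanes $\sigma_1(L)$ form only a four-dimensional family in $(\PP^5)^*$ (the dual of the Pl\"ucker quadric), so ``B\'ezout for generic $L$'' requires observing that this family is not contained in the dual variety of $C$ nor in the locus of hyperplanes through a singular point of $C$; since the dual quadric is irreducible of dimension four and is neither a linear space nor equal to $C^\vee$, a general $L$ indeed gives $d$ reduced intersection points.
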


\begin{proof}
This is one of the basic facts derived in Edge's book \cite[Chapter I, \S 26]{edge}.
\end{proof}

The defining polynomial of the surface $\mathcal{S}_C \subset \PP^3$
 can be computed from the equations
 \begin{equation}
\label{eq:skewsymP}
\begin{pmatrix}
0  & p_{12} &  p_{13} & p_{14} \,\\
-p_{12} & 0 & p_{23} & p_{24} \,\\
-p_{13} & -p_{23} & 0 & p_{34} \,\\
-p_{14} & -p_{24} & -p_{34} & 0
\end{pmatrix} \cdot \begin{pmatrix}  x_1 \\  x_2 \\ x_3 \\ x_4 \end{pmatrix} 
\,\, = \,\,
\begin{pmatrix} 0 \\ 0 \\ 0 \\ 0 \end{pmatrix}.
\end{equation}
We add these four bilinear forms to the ideal $I_C$, and then we saturate
 with respect to the irrelevant ideal $\langle p_{12}, p_{13}, p_{14},
p_{23}, p_{24},p_{34} \rangle$ of $\PP^5$.
The resulting ideal is prime, and it describes the incidence correspondence of
points on lines that are in the curve $C$. Now, by eliminating the
unknowns $p_{ij}$, we obtain a principal homogeneous prime ideal in 
$\RR[x_1,x_2,x_3,x_4]$. The generator of this 
ideal is the polynomial of degree $d$ that defines the desired surface.

This computation can be reversed. Given a surface $\mathcal{S}$ in $\PP^3$,
we can compute the {\em Fano scheme} of all lines on $\mathcal{S}$. This lives in
${\rm Gr}(1,\PP^3)$. Its ideal in  $\RR[p_{12},p_{13},p_{14},p_{23},p_{24},p_{34}]$
is obtained by substituting (\ref{eq:lineparaone})
into the equation of $\mathcal{S}$, extracting the coefficients of the resulting polynomial in $t$,
and saturating their ideal  by $\langle p_{34} \rangle$.
The Fano scheme  is usually empty or consists of points.  However, if it
 is a curve $C$, then the surface is ruled and $\mathcal{S} = \mathcal{S}_C$.

The {\em dual projective space} $(\PP^3)^*$ consists of all planes in $\PP^3$.
Homogeneous coordinates on $(\PP^3)^*$ are denoted $(y_1 : y_2 : y_3 :
y_4)$. This point $y$ represents the plane $\{ x \in \PP^3: \sum_{i=1}^4 y_i
x_i = 0\}$. Given an arbitrary subvariety ${X} $ in $ \PP^3 $, its {\em dual}
${X}^\vee \subset (\PP^3)^*$ is the Zariski closure of the set of all planes
$y$ that are tangent at a smooth point of ${X}$. Typically, the dual
$\mathcal{S}^\vee$ of a surface of degree at least two will be a surface in
$(\PP^3)^*$. However, it can happen that $\mathcal{S}^\vee$ is a curve. In
that case,  $\mathcal{S}$ is a {\em developable surface}. Each
developable surface $\mathcal{S}$ is encoded by its dual curve
$\mathcal{S}^\vee$ since we can recover the surface by the biduality relation
$\,\mathcal{S} = (\mathcal{S}^\vee)^\vee$.

\begin{theorem}
\label{thm:everydev} Every developable surface $\mathcal{S}$ is a ruled
surface, i.e.,~it satisfies $\mathcal{S} = \mathcal{S}_C$ for some curve $C$
in ${\rm Gr}(1,\PP^3)$. For a curve $C$ in ${\rm Gr}(1,\PP^3)$, the
corresponding ruled surface $\mathcal{S}_C$ is developable if and only if all
tangent lines of $\,C$ in $\PP^5$ are  contained in the Pl\"ucker quadric.
\end{theorem}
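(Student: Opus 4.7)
The plan is to treat the two assertions separately: deduce ruledness from biduality, and then translate the Plücker-quadric tangency condition into classical differential-geometric developability via the parametrization \eqref{eq:lineparaone}.

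For the first assertion, let $\Gamma := \mathcal{S}^\vee \subset (\PP^3)^*$, which by hypothesis is an irreducible curve. By biduality $\mathcal{S} = \Gamma^\vee$, so it suffices to show that the dual of any irreducible curve in $(\PP^3)^*$ is a ruled surface in $\PP^3$. At a smooth point $y \in \Gamma$, the projective tangent line $T_y\Gamma$ is a line in $(\PP^3)^*$, i.e., a pencil of planes in $\PP^3$; this pencil has a common axis, a line $\ell_y \subset \PP^3$. The assignment $y \mapsto \ell_y$ defines a morphism from $\Gamma$ to $\Gr$ whose image is an irreducible curve $C$. I would then argue set-theoretically: a point $x \in \PP^3$ corresponds to the hyperplane $\{y' : y'(x)=0\}$ in $(\PP^3)^*$, and $x \in \Gamma^\vee$ precisely when this hyperplane contains some tangent line $T_y\Gamma$, i.e., when $y'(x) = 0$ for every $y' \in T_y\Gamma$. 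This is equivalent to $x \in \ell_y$, giving $\Gamma^\vee = \bigcup_y \ell_y$. Combined with Lemma~\ref{lem:everyruled} this proves $\mathcal{S} = \mathcal{S}_C$.

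For the second assertion, I would work in the affine chart where $p_{34}(t) \ne 0$ and use \eqref{eq:lineparaone} to write $z(t,s) = \mathbf{a}(t) + s\,\mathbf{b}(t)$ with $\mathbf{a} = (-p_{34}, 0, p_{14}, -p_{13})$ and $\mathbf{b} = (0, p_{34}, -p_{24}, p_{23})$. The embedded projective tangent plane of $\mathcal{S}_C$ at $z(t,s)$ lifts to the $3$-dimensional subspace $\mathrm{span}(\mathbf{a},\mathbf{b}, \dot{\mathbf{a}} + s\,\dot{\mathbf{b}}) \subset \CC^4$, and $\mathcal{S}_C$ is developable precisely when this subspace is independent of $s$, equivalently when $\dot{\mathbf{b}} \in \mathrm{span}(\mathbf{a},\mathbf{b},\dot{\mathbf{a}})$, i.e., when
\[
\det\bigl(\mathbf{a}(t),\ \mathbf{b}(t),\ \dot{\mathbf{a}}(t),\ \dot{\mathbf{b}}(t)\bigr) \;=\; 0.
\]
Expanding this $4\times4$ determinant and using the Plücker relation \eqref{eq:pluckerrel} to substitute for $p_{12}$, one obtains (up to the nonzero factor $\pm p_{34}^2$) exactly the Plücker quadratic form evaluated at the tangent vector, $Q(\dot p(t))$. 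Since $C \subset Q$ automatically forces the polar form $B(p,\dot p)$ to vanish, the projective tangent line $\overline{p\,\dot p} \subset \PP^5$ lies in $Q$ if and only if $Q(\dot p) = 0$, yielding the stated equivalence.

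The step I expect to be most delicate is the first part: the identification $\Gamma^\vee = \bigcup_y \ell_y$ must be upgraded from a set-theoretic equality on a Zariski-dense open (avoiding cusps and flexes of $\Gamma$) to an equality of algebraic varieties, and one must verify that the morphism $\Gamma \to \Gr$ has a well-defined irreducible image $C$ recovering $\mathcal{S}$ via Lemma~\ref{lem:everyruled}; this requires a careful use of the conormal variety and biduality at non-smooth points of $\Gamma$. The second part is conceptually clean, and the only real work is the routine determinant expansion and verifying that the chart $p_{34}\ne 0$ captures a dense open of $C$ (covered by relabeling indices otherwise).
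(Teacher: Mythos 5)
Your argument is correct, but it is worth noting that the paper does not actually prove Theorem~\ref{thm:everydev}: it cites Edge \cite[Chapter V, \S 344]{edge} for the first statement and \cite[Prop.~12.4.1]{BCMG} for the second. What you have written out is essentially the standard self-contained proof that those references contain. For the first half, the axis-of-the-pencil construction $y \mapsto \ell_y$ together with biduality is exactly right, and your identification $\Gamma^\vee = \overline{\bigcup_y \ell_y}$ goes through because both sides are irreducible closed surfaces agreeing on a dense open set; the only degenerate case to exclude is $\Gamma$ a line, which cannot occur since $\mathcal S$ is a surface. For the second half, the computation is cleanest if, instead of brute-force expansion, you use the identity $\det(u,v,w,z) = \langle u \wedge v,\, w \wedge z\rangle$ for the bilinear form polarizing the Pl\"ucker quadric $Q$: with $q = \mathbf a \wedge \mathbf b = -p_{34}\, p$ one gets $\langle \dot q, \dot q\rangle = -2\det(\mathbf a, \mathbf b, \dot{\mathbf a}, \dot{\mathbf b})$, and since $Q(p)\equiv 0$ forces $B(p,\dot p)\equiv 0$ on $C$, this equals $-2\,p_{34}^2\, \cdot 2Q(\dot p)$ up to the convention for $Q$; the tangent line $\overline{p\,\dot p}$ lies in $Q$ iff $Q(\dot p)=0$, exactly as you say. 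One small point deserves a sentence in a final write-up: the paper defines ``developable'' as $\dim \mathcal S^\vee = 1$, whereas your criterion is that the tangent plane is constant along each generator. The implication from constancy to $\dim \mathcal S^\vee \le 1$ is immediate; for the converse, observe that the tangent planes along a fixed generator $L_t$ all lie in the pencil of planes through $L_t$, and two distinct generators share at most one plane in their pencils, so a nonconstant Gauss map along generators would sweep out a two-dimensional family of tangent planes. With that remark added, your proof is complete and, unlike the paper's, does not rely on external references.
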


\begin{proof}
For the first statement see
\cite[Chapter V, \S 344]{edge}.
The second is~\cite[Prop.~12.4.1]{BCMG}.
\end{proof}

A developable surface $\mathcal{S} = \mathcal{S}_C$ that is not a cone has
three distinct encodings as a curve. First, there is the
curve $C$ in the Grassmannian ${\rm Gr}(1,\PP^3)$. Second,
there is the dual curve $\mathcal{S}^\vee$ in  $(\PP^3)^*$.
We saw how to recover $\mathcal{S}$ from these encodings.
Finally, there is the {\em edge of regression} $E(\mathcal{S})$ 
which lies on the surface $\mathcal{S}$ in $\PP^3$. 
Points in $E(\mathcal{S})$ are
planes in $(\PP^3)^*$ that intersect the curve $\mathcal{S}^\vee$ with multiplicity three.
The surface $\mathcal{S}$ is the tangential surface of $E(\mathcal{S})$,
i.e.,~it is the union of lines that are tangent to $E(\mathcal{S})$ (see~\cite[page 111]{Pie}). This also verifies
that $\mathcal{S}$ is indeed a ruled surface. 
The curves $E(\mathcal{S})$ and $\mathcal{S}^\vee$ are also related by a biduality relation; namely, $\mathcal{S}^\vee$
is the set of planes in $\PP^3$ that intersect $E(\mathcal{S})$ with multiplicity three~\cite[Thm.~5.1]{Pie1}.
Moreover, the tangent lines of $E(\mathcal{S})$ and $\mathcal{S}^\vee$ are dual to each other.
This situation degenerates when the surface $\mathcal{S}$ is a cone, which means that 
its dual $\mathcal{S}^\vee$ is a plane curve.
In that special case, the edge of regression $E(\mathcal{S})$ is the vertex of the cone $\mathcal{S}$.

We illustrate the three curve encodings of a developable surface with a simple
example.

\begin{example} \rm
Let $\mathcal{S}$ be  the surface of degree six in $\PP^3$ that is defined by the polynomial
\begin{equation*}
 f \,\,= \,\,
16 x_2^3 x_3^3-27 x_1^2 x_3^4+6 x_1 x_2^2 x_3^2 x_4-27 x_2^4 x_4^2+48 x_1^2 x_2 x_3 x_4^2-16 x_1^3 x_4^3.
\end{equation*}
This is the surface in \cite[\S 3, eqn.~(9)]{SS}. We verify that $\mathcal{S}$ is developable
by computing the ideal of its dual variety $\mathcal{S}^\vee \subset (\PP^3)^*$.
This shows that $\mathcal{S}^\vee$ is a smooth rational quartic curve:
\begin{equation}
\label{eq:annacurve1} \bigl\langle \,
    y_2 y_3-4 y_1 y_4\,,\,y_3^3+4 y_2 y_4^2 \,,\, y_1 y_3^2+y_2^2 y_4 \,,\,y_2^3+4y_1^2 y_3 \,\bigr\rangle.  
\end{equation}
The curve $C$ in the Grassmannian ${\rm Gr}(1,\PP^3)$ that encodes
the ruling of $\mathcal{S} = \mathcal{S}_C$ has the ideal
\begin{equation*}
  I_C \,\,\, = \,\,\,
\bigl\langle\,
2 p_{14}-p_{23} \, , \,
p_{24}^2+3p_{13} p_{34} \, , \,
p_{13} p_{24}-9 p_{12} p_{34} \, , \,
p_{23}^2-16 p_{12}  p_{34} \, , \,
p_{13}^2+3 p_{12} p_{24} \,
\bigr\rangle .
\end{equation*}
This ideal defines the Fano scheme of $\mathcal{S}$ in $\PP^5$.
Finally, the edge of regression $E(\mathcal{S})$ is the rational quartic curve
$\,\{(s^4:s^3t: st^3: t^4)\}\,$ in $\PP^3$. The ideal of this curve equals
\begin{equation}
\label{eq:annacurve2} \bigl\langle \,
    x_2 x_3-x_1 x_4\,,\,x_3^3  - x_2 x_4^2 \,,\, x_1 x_3^2 - x_2^2 x_4 \,,\,x_2^3 -  x_1^2 x_3 \,\bigr\rangle.  
\end{equation}
This curve has $\mathcal{S} $ as its tangential surface. Note that (\ref{eq:annacurve1}) is isomorphic to
(\ref{eq:annacurve2}). This reflects the isomorphism between (9) and (10) in
\cite{SS}. All of these computations can be reversed. This example shows how various 
objects can serve as a representation of the surface~$\mathcal S$.
\hfill $\diamondsuit$
\end{example}

Many of the ruled surfaces $\mathcal{S}_C$ we shall encounter in later sections have the property
that their defining polynomial $f$ is extremely large and impossible to compute symbolically.
In such cases, the curve $C $ in ${\rm Gr}(1,\PP^3) \subset \PP^5$ is more manageable, and we can often
compute generators for its ideal $I_C$.  This encoding of the ruling
enables us to carry out computations with the surface  $\mathcal{S}_C$.
For example, suppose $ \mathcal{S}_C$ has degree $d$ and consider
 a general line $L$ in $\PP^3$. We may wish to compute the $d$  points in the
 intersection $\mathcal{S}_C \cap L$. This problem arises in our computer vision application
 when the camera travels along $L$. The real intersection points with the visual event surfaces
 are precisely the visual events we are interested in.

Fix two points  $(a_1:a_2:a_3:a_4)$ and $(b_1:b_2:b_3:b_4)$ on $L$, and parameterize $L$ by
\begin{equation}
\label{eq:linepara} x_i \,=\, s a_i + t b_i \quad {\rm for} \,\,\,i=1,2,3,4. 
\end{equation}
To compute $\mathcal{S}_C \cap L$ from $I_C$, we substitute 
(\ref{eq:linepara}) into (\ref{eq:skewsymP}), we add the resulting
four bilinear forms to $I_C$, we saturate with respect to
$\langle p_{12},\ldots,p_{34} \rangle$, and we then eliminate
the six Pl\"ucker coordinates. The result is the principal
ideal in $\mathbb{R}[s,t]$ that is generated by the binary form
\begin{equation}
\label{eq:binfor}   f \bigl(\, s a_1 + t b_1, \,
s a_2 + t b_2, \,s a_3 + t b_3, \,s a_4 + t b_4 \,\bigr). 
\end{equation}
Thus, even when $f$ is unknown, we can compute its specialization
(\ref{eq:binfor}) directly from $I_C$.

When $\mathcal S$ is developable, the specialization \eqref{eq:binfor} can
also be obtained from the ideal $I(\mathcal S^\vee)$. Let $J$ be a
Jacobian matrix for the ideal $I(\mathcal{S}^\vee)$ in $\RR[y_1,y_2,y_3,y_4]$.
This matrix has four columns. Let $J_x$ be the matrix obtained from $J$ by
adding one more row, namely the vector  $(x_1,x_2,x_3,x_4)$ in
(\ref{eq:linepara}). We now add the $3 \times 3$-minors of $J_x$ to the ideal
$I(\mathcal{S}^\vee)$, we saturate with respect to the ideal of $2 \times
2$-minors of $J$, and then we eliminate the unknowns $y_1,y_2,y_3,y_4$. The
result is the desired principal ideal  (\ref{eq:binfor}) in $\RR[s,t]$. See
Example~\ref{ex:stabbing_victory} for an application.

These strategies can be adapted to compute the plane curve that is obtained as
the intersection of a ruled or developable surface $\mathcal S$ with a fixed
plane $H$ in $\PP^3$. For event surfaces, this corresponds to restricting the
camera movement to a plane, or to assuming that all projections are
orthographic (which means that the viewpoint lies on the plane at infinity).
It is sufficient to parameterize the points on $H$ by writing $x_i = s
a_i + t b_i + u c_i$ in~\eqref{eq:binfor}.

\smallskip

\paragraph*{Associated ruled surfaces.}

The ruled surfaces of interest to us arise
from an arbitrary curve or surface $X$ in $\PP^3$. 
They represent families of planes and lines that intersect $X$ with prescribed multiplicities and are shown in the bottom rows of Figures \ref{fig:curve-graph}  and \ref{fig:surface-graph}.
For a general curve or surface $X$ in $\PP^3$, the rows of these diagrams correspond to codimension in $(\PP^3)^*$ or ${\rm Gr}(1,\PP^3)$.
The shown subvarieties consist of lines and planes that intersect $X$ with various multiplicities $m$.
A solid edge from $Y_1$ to $Y_2$ means that $Y_2$ is an irreducible component of the
singular locus of $Y_1$. A dashed edge just means that $Y_2$ is 
contained in $Y_1$.
Below the ambient spaces $(\PP^3)^*$ and ${\rm Gr}(1,\PP^3)$
we see the {\em coisotropic hypersurfaces} studied by
 Gel'fand, Kapranov and Zelevinsky in \cite[Sec.~4.3.B]{GKZ}.
See also \cite{Kohn, Stu}. These codimension one loci~are:
\begin{itemize}
\item the dual surface $X^\vee $ in $(\PP^3)^*$, \vspace{-0.12in}
\item the \emph{Chow threefold} ${\rm Ch}(X)$ in ${\rm Gr}(1,\PP^3)$, consisting of lines that meet the curve $X$, \vspace{-0.12in}
\item the \emph{Hurwitz threefold} ${\rm Hur}(X)$ in ${\rm Gr}(1,\PP^3)$, of lines that are tangent to the  surface $X$. 
\end{itemize}
Every irreducible hypersurface in ${\rm Gr}(1,\PP^3)$ is defined by one equation in Pl\"ucker coordinates,
 which is unique up to scaling and modulo the Pl\"ucker quadric.
In the two cases above, this equation is called \emph{Chow form} resp.~\emph{Hurwitz form} of $X$.
As we shall see in Sections \ref{sec3} and \ref{sec4}, the irreducible components of
the visual event surface of $X$ are (iterated) singular loci of the coisotropic hypersurfaces associated to $X$. 
The developable components are dual to the singular curves in the dual surface $X^\vee$.
The non-developable components are parameterized by the singular curves in the singular locus of the Chow or Hurwitz threefold of~$X$.

We first consider a general smooth curve $X$ in $\PP^3$. 
The left diagram in Figure~\ref{fig:curve-graph} depicts the landscape in $(\PP^3)^*$.  The dual surface $X^\vee$ 
consists of planes that meet $X$ with multiplicity $2$. The singular locus of $X^\vee$
is the union of two irreducible curves, whose
points are osculating planes $(m=3)$ and bitangent planes $(m=2+2)$.
The symbols that denote our loci, like $\mathcal{T}^p(X)$ and $\mathcal{E}^p(X)$,
 will be explained in Sections \ref{sec3} and \ref{sec4}.
The right diagram in Figure \ref{fig:curve-graph} shows
the landscape in the Grassmannian ${\rm Gr}(1,\PP^3)$.
We refer to \cite[Thm.~1.1 and Sec.~7]{KNT} for precise statements and proofs, 
also for the right diagram in Figure \ref{fig:surface-graph}.
The singular locus of the Chow threefold ${\rm Ch}(X)$ is the surface ${\rm Sec}(X)$ in ${\rm Gr}(1,\PP^3)$ of secant lines, i.e., lines that meet $X$ twice.
The singular locus of  ${\rm Sec}(X)$ is the curve $\mathcal{D}^\ell(X)$ of trisecant lines.
The curve $\mathcal{T}^\ell(X)$ of tangent lines is contained in ${\rm Sec}(X)$
but it does not belong to the singular~locus.

\begin{figure}[htbp]
  \centering
  \begin{tikzpicture}[sibling distance=10em,
    every node/.style = {
    align=center}]]
    \node {$(\PP^3)^*$\\{\footnotesize $m=1$}}
    child[dashed] {node[solid] {$X^\vee$\\{\footnotesize $m=2$}}
    child[solid] { node {$\mathcal T^p(X)$\\{\footnotesize $m=3$} } }
    child[solid] { node {$\mathcal E^p(X)$\\{\footnotesize $m=2+2$}}}};
    \end{tikzpicture}
    \qquad \qquad
    \begin{tikzpicture}[sibling distance=10em,
    every node/.style = {
    align=center}]]
    \node {${\rm Gr}(1,\PP^3)$\\\footnotesize $m=0$}
    child[dashed] {node {${\rm Ch}(X)$\\ \footnotesize $m=1$}
    child[solid] {node {${\rm Sec}(X)$\\ \footnotesize $m=1+1$}
    child[solid] {node {$\mathcal D^\ell(X)$\\ \footnotesize $m=1+1+1$}}
    child[dashed]{node {$\mathcal T^\ell(X)$\\ \footnotesize $m=2$}}}};
    \end{tikzpicture}
  \caption{Loci of planes and lines that meet a curve $X$ with assigned multiplicities.}
  \label{fig:curve-graph}
\end{figure}
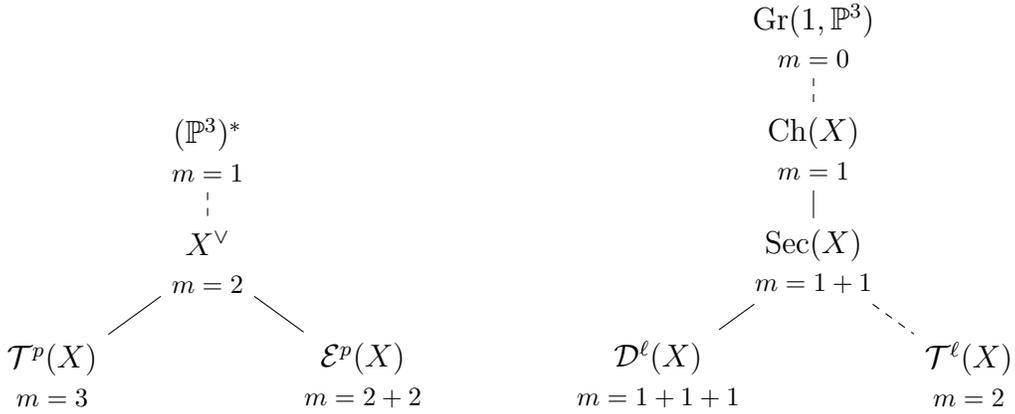

In Figure~\ref{fig:surface-graph}, we consider various loci associated with a general smooth surface $X$ in
$\PP^3$. The dual surface $X^\vee$ is singular along two irreducible curves.
The nodal component $\mathcal{E}^p(X)$ of its singular locus is the set of all bitangent planes, and
the cuspidal component $\mathcal{P}^p(X)$ is the set of all planes that
intersect $X$ with multiplicity three at a point. The Hurwitz threefold ${\rm
Hur}(X)$ is singular along two irreducible surfaces. Its nodal component ${\rm
Bit}(X)$ contains all bitangent lines, and its cuspidal component ${\rm PT}(X)$
comprises all principal tangents, i.e., lines that meet $X$ with
multiplicity three. The latter was denoted ${\rm Infl}(X)$ in \cite{KNT}. These
surfaces contain three special curves $\mathcal{F}^\ell(X)$,
$\mathcal{C}^\ell(X)$ and $\mathcal{T}^\ell(X)$, indicating lines that meet
$X$ with multiplicity $4$, or $3{+}2$, or $2{+}2{+}2$. For instance,
$2{+}2{+}2$ refers to  tritangent lines. Sections \ref{sec4}, \ref{sec5} and
\ref{sec6} are devoted to the ruled surfaces in $\PP^3$ that are represented by these curves.

\begin{figure}[htbp]
  \centering
  \begin{tikzpicture}[sibling distance=10em,
    every node/.style = {
    align=center}]]
    \node {$(\PP^3)^*$\\{\footnotesize $m=1$}}
    child[dashed] {node[solid] {$X^\vee$\\ \footnotesize $m=2$}
    child[solid] { node {$\mathcal P^p(X)$\\ \footnotesize $m=3$} }
    child[solid] { node {$\mathcal E^p(X)$\\ \footnotesize $m=2+2$} }};
    \end{tikzpicture}
    \qquad \qquad
    \begin{forest}
    [{${\rm Gr}(1,\PP^3)$ \\ \footnotesize $m=1$}, align=center, base=bottom
      [{${\rm Hur}(X)$\\ \footnotesize $m=2$}, align=center, base=bottom, edge=dashed
        [{${\rm PT}(X)$\\\footnotesize $m=3$}, align =center, base = bottom, name = PTnode
          [{$\mathcal F^\ell(X)$\\\footnotesize $m=4$}, align=center, base=bottom, name = Fnode]
          [, no edge ]
        ] 
        [{${\rm Bit}(X)$\\\footnotesize $m=2+2$}, align=center, base=bottom, name = BITnode
          [{$\mathcal C^\ell(X)$\\\footnotesize $m=3+2$}, align=center, base=bottom, name = Cnode]
          [, no edge ]
          [{${\mathcal T}^\ell(X)$\\\footnotesize $m=2+2+2$}, align=center, base=bottom]
        ]
      ]
    ]
    \draw (PTnode.south east)[dashed] -- (Cnode.north west);
    \draw (BITnode.south west)[dashed] -- (Fnode.north east);
    \end{forest}
  \caption{Loci of planes and lines that meet a surface $X$ with assigned multiplicities.}
  \label{fig:surface-graph}
\end{figure}
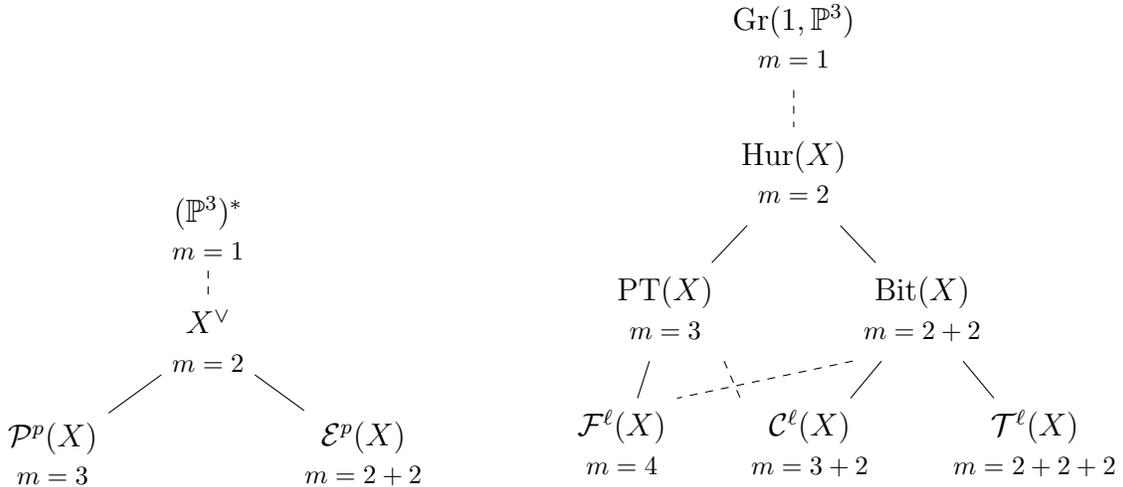

For each of the five curves at the bottom of Figure~\ref{fig:surface-graph},
there is also an associated curve on $X$. It consists of the points on $X$
where the special intersection occurs. For example, the curve associated with
$\mathcal E^{p}(X)$ is the locus of points on $X$ that lie on bitangent
planes. These are the contact points on a curved object when it is rolled on a
table. Our favorite terminology for this curve is due to Cayley: he calls it
the \emph{node-couple curve}. For $\mathcal P^{p}(X)$ and $\mathcal
F^{\ell}(X)$, the special contact occurs at a single point, and we can give a
more detailed description. At a general point $x$, the surface has two
principal tangents. These are the tangent lines of the nodal curve obtained by
intersecting $X$ with its tangent plane at $x$. The same lines are the
intersection of the tangent plane with the Hessian quadric at $x$ defined by
\begin{equation*}
 \quad
y \cdot H_f(x) \cdot y^T \,\, = \,\, 0, \quad
\hbox{where $y =  (y_1 : y_2 : y_3 : y_4)\,\,$ and} \quad
 H_f \,=\, \left(\frac {\partial^2 f}{\partial x_{i} \partial x_{j}}\right).
\end{equation*}

Exceptional situations occur at {\em flecnodal} and {\em parabolic} points
$x$. At a flecnodal point, one of the two principal tangents has intersection
multiplicity four. Such a line is called a {\em flecnodal line}. At a parabolic
point, the Hessian matrix $H_f(x)$ drops rank, and the two principal tangents
degenerate to a double line. At these points, the intersection of $X$ with its
tangent plane has a cusp at $x$. The locus of all parabolic points is the
curve given by the intersection of $X$ with the {\em Hessian surface}
$\{\det(H_f)=0\}$. Over the real numbers, the parabolic curve is the boundary
between the {\em elliptic} and {\em hyperbolic} regions on $X$, where the two
principal tangents are respectively both complex or both real. 

The curve
$\mathcal P^p(X)$ is the set of tangent planes at parabolic points, and the
curve $\mathcal F^{\ell}(X)$ is the set of flecnodal lines at flecnodal
points. The parabolic and flecnodal curves always intersect tangentially, at
special points known as {\em godrons} (or {\em cusps of the Gauss map}).
Interestingly, the node-couple curve also passes through the godrons, and has
the same tangent as the parabolic and flecnodal curves~\cite[p.~170]{cayley}.

\begin{remark} \rm A formal study of the singular loci of the families of
lines and planes described in this section presents many technical
challenges. This is the topic of \cite{KNT}. For example, in the course of examining parabolic surfaces, we
discovered a small error in \cite[Section 4]{ABT}, where Arrondo {\it et al.}
consider the incidence variety for principal tangents of $X$: $$ Y_2 \,=
\,\bigl\{ \,(x,L) \in X \times {\rm Gr}(1,\PP^3) \,: \,
\hbox{$L$ intersects $X$ at $x$ with multiplicity at least $3$} \,\bigr\}. $$
Lemma 4.1 b) in \cite{ABT} states that the surface $Y_2$ is singular at points
$(x,L)$ for which $x$ is parabolic. This is incorrect. A general cubic surface
$X$ has a degree $12$ curve of parabolic points. However, the incidence
variety $Y_2$ is smooth. This is shown by direct computation.
\end{remark}

\section{Views of Curves}
\label{sec3}

In this section we study the visual events for a general
curve $X$ in $\PP^3$ of degree $d$ and genus $g$. In particular, $X$ is smooth and
irreducible. The three visual events correspond to the three {\em Reidemeister
moves} that are familiar from knot theory. They are shown in
Figure~\ref{fig:eins}.

\begin{figure}[h]
  \begin{center}
\includegraphics[height=6.5cm]{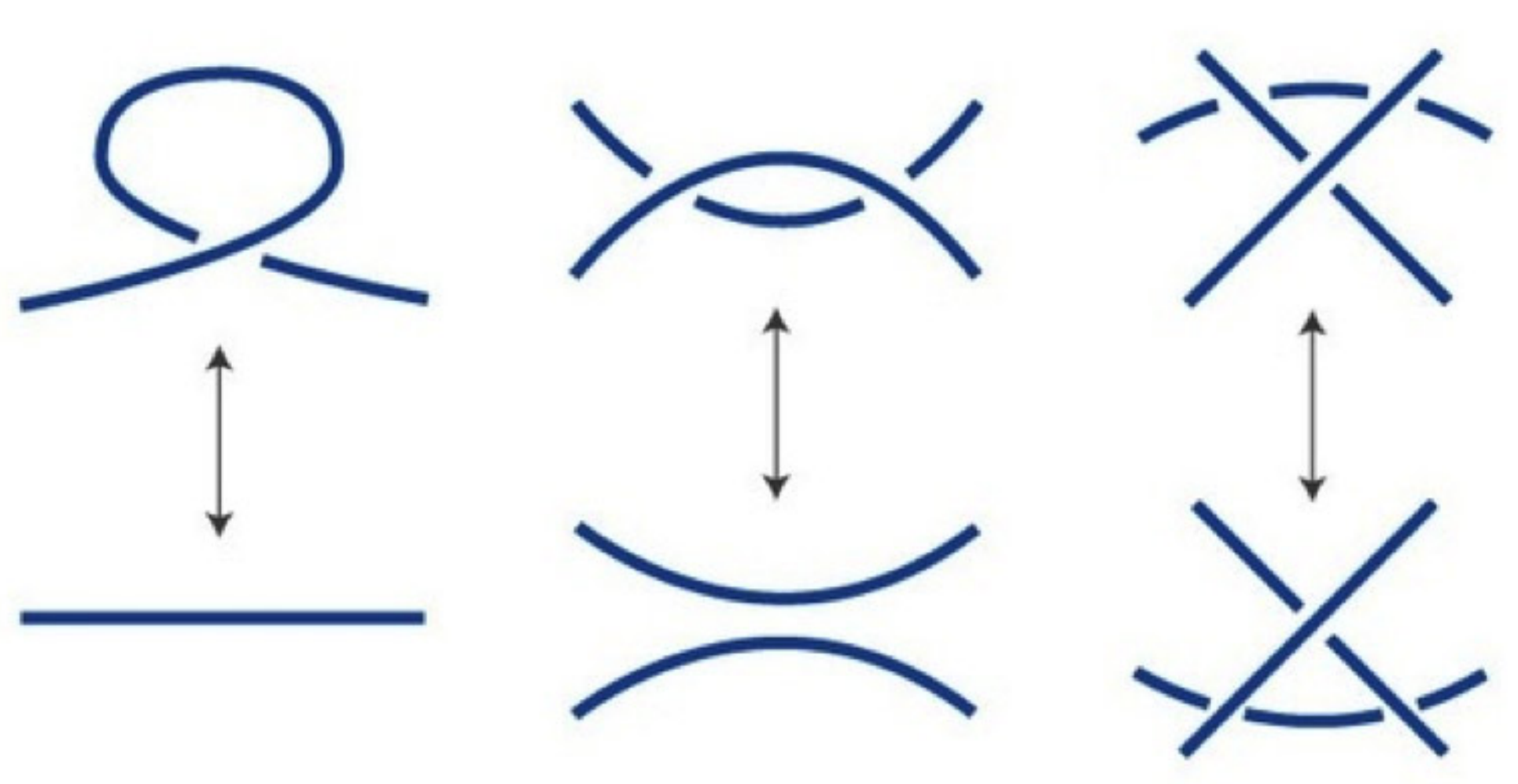}
\vspace{-0.2in}
\end{center}    \caption{\label{fig:eins} 
Changing views of a curve correspond to Reidemeister moves.
The viewpoint $z$ crosses the tangential surface 
(left), edge surface (middle), or trisecant surface (right).  }
 \end{figure}

\smallskip

The three components of the visual event surface of a space curve $X$ are as
follows:
\begin{enumerate}
\item The {\em tangential surface} $\mathcal{T}(X)$, also known as the {\em
tangent developable}, is the union of all tangent lines to $X$. It represents
viewpoints $z$ such that the plane curve $C_z(X)$ has a cusp. When $z$ crosses
$\mathcal{T}(X)$, a node on $C_z(X)$ transitions from being real to complex.
\item The {\em edge surface} $\mathcal{E}(X)$ is the union of all secant lines
that are edges. An {\em edge} is the line spanned by two points on $X$ whose
tangent lines lie in a common plane. This surface represents viewpoints
$z$ such that the plane curve $C_z(X)$ has a tacnode. When $z$ crosses $\mathcal E(X)$, a pair
of nodes transitions between being real and  being complex.
\item The {\em trisecant surface} $\mathcal{D}(X)$ is the union of all lines that are
spanned by triples of collinear points on $X$ (the symbol $\mathcal{D}$ stands
for {\em drei}). This represents viewpoints $z$ such that $C_z(X)$ has a
triple point. When $z$ crosses $\mathcal{D}(X)$, the real curve $C_z(X)$
experiences a triangle crossing, but the real singularity structure is
unchanged.
\end{enumerate}

The following classical theorem characterizes the expected degrees of these ruled surfaces.

\begin{theorem}
\label{thm:curves}
For a general space curve $X$ of degree $d$ and genus $g$, 
the degrees of the tangential surface $\mathcal{T}(X)$,
 the edge surface $\mathcal{E}(X)$
and the trisecant surface $\mathcal{D}(X)$ are as follows:
$$
\begin{matrix}
{\rm deg}\bigl(\mathcal{T}(X) \bigr) & = & 2(d+g-1), \\
{\rm deg}\bigl(\mathcal{E}(X) \bigr)& =  & 2(d-3)(d+g-1), \\
 {\rm deg}\bigl(\mathcal{D}(X) \bigr) & = & \frac{(d-1)(d-2)(d-3)}{3}-(d-2)g. \\
 \end{matrix}
$$
\end{theorem}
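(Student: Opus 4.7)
The uniform strategy, via Lemma~\ref{lem:everyruled}, is that the degree of a ruled surface $\mathcal{S}_C\subset\PP^3$ equals the degree of $C$ under the Pl\"ucker embedding, which equals the Schubert intersection $[C]\cdot\sigma_1$ counting generators of $\mathcal{S}_C$ that meet a fixed general line $L\subset\PP^3$. In each of the three cases I will count those generators.

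For $\mathcal{T}(X)$, a tangent line $T_xX$ meets $L$ exactly when $x$ is a ramification point of the linear projection $\pi_L\colon X\to\PP^1$ from $L$, a degree-$d$ covering with only simple ramification for general $L$. Riemann--Hurwitz then gives the ramification count $2g-2+2d = 2(d+g-1)$.

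For $\mathcal{E}(X)$, I would pass to $X\times X$ with the standard basis $\alpha = X\times\{\mathrm{pt}\}$, $\beta = \{\mathrm{pt}\}\times X$, $\Delta$, and compute the intersection of two divisors: $B_L$, pairs whose chord meets $L$, and $E$, off-diagonal pairs with coplanar tangent lines. A fibre count fixes $[B_L] = d(\alpha+\beta)-\Delta$ from $B_L\cdot\alpha = d-1$ (residual intersection of $X$ with the plane spanned by $L$ and a fixed point) and $B_L\cdot\Delta = r := 2(d+g-1)$ (tangents meeting $L$). For $[E]$, I would pull back the ``coplanar lines'' divisor class $\sigma_1\otimes 1 + 1\otimes\sigma_1$ on $\Gr\times\Gr$ along the Gauss map $x\mapsto T_xX$, obtaining $r(\alpha+\beta)$ on $X\times X$. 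A Taylor expansion of the $4\times 4$ Pl\"ucker determinant near the diagonal shows it vanishes to order exactly $(s-t)^4$, so this pull-back equals $E+4\Delta$ and $[E] = r(\alpha+\beta)-4\Delta$. A short computation then gives $[E]\cdot[B_L] = 4(d-3)(d+g-1)$, and halving (to pass from ordered pairs on $X\times X$ to unordered pairs on $X^{(2)}$) yields the desired $2(d-3)(d+g-1)$. The delicate point here is pinning down the diagonal multiplicity as exactly four.

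For $\mathcal{D}(X)$, I would work on $X^{(3)}$ or $X^3$, where the collinear-triple locus is a curve whose class is accessible by applying Porteous's formula to the $3\times 4$ matrix of homogeneous coordinates (a rank-$\leq 2$ degeneracy locus); intersecting this with a ``line meets $L$'' divisor and normalising for the symmetric group gives the count. An equivalent route exploits that the secant map $X^{(2)}\to\mathrm{Sec}(X)\subset\Gr$ is birational onto a surface of degree $(d-1)^2-g$ (computed by intersecting with $\sigma_1^2 = \sigma_2 + \sigma_{1,1}$: $\binom{d-1}{2}-g$ apparent double points through a general point plus $\binom{d}{2}$ secants in a general plane) and is exactly $3$-to-$1$ over $\mathcal{D}^\ell(X)$, so the degree of the trisecant curve falls out of the associated triple-point formula. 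I expect the main obstacle to be the careful bookkeeping of excess contributions from the small diagonals in $X^{(3)}$, where two or three of the points collide; these play the role that the $4\Delta$-correction played in the edge case.
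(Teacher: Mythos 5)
Your arguments for the first two formulas are sound. The Riemann--Hurwitz count for $\mathcal{T}(X)$ is exactly the paper's route (which cites Piene for the same fact). Your computation of $\deg\mathcal{E}(X)$ on $X\times X$ is a genuinely different and essentially self-contained proof: the paper simply cites \cite[Theorem 2.1]{RS2}, whose argument rests on De Jonqui\`eres' formula, whereas you get the answer from the two divisor classes $[B_L]=d(\alpha+\beta)-\Delta$ and $[E]=r(\alpha+\beta)-4\Delta$ with $r=2(d+g-1)$. Both classes are correct (one can confirm them without assuming $\mathrm{NS}(X\times X)$ has rank $3$, by noting that $\det(x,y,a,b)$ and the polarized Pl\"ucker form $\det\bigl(x,x',y,y'\bigr)$ are sections of $\mathcal{O}(1)\boxtimes\mathcal{O}(1)$ and of its pullback under the Gauss map, vanishing on $\Delta$ to orders $1$ and $4$ respectively; the local expansion gives $\tfrac{(s-t)^4}{12}\det(x,x',x'',x''')$, so the order is exactly $4$ for a non-degenerate curve). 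The arithmetic $\tfrac12[E]\cdot[B_L]=2(d-3)(d+g-1)$ checks out, and Kleiman transversality handles the genericity of $L$. This buys a proof where the paper only has a pointer.

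The trisecant formula, however, is not proved, and the gap is more than ``bookkeeping.'' Your Porteous approach on $X^3$ fails because the rank-$\le 2$ locus of the $3\times 4$ coordinate matrix contains each pairwise diagonal $\Delta_{ij}\cong X\times X$, a surface, while the expected dimension of the degeneracy locus is $1$; so Porteous's formula does not compute the class of the trisecant curve but of a cycle supported largely on these excess components, and extracting the residual class requires blowing up the diagonals (or working on the Hilbert scheme of three points, which is Le Barz's route) --- that residual computation is precisely the content of Berzolari's formula. Your alternative via the chord map $X^{(2)}\to\Gr$ fares no better as stated: for a map from a surface to a fourfold the expected dimension of the triple-point locus is $2-2\cdot(4-2)=-2$, yet the trisecant locus is a curve, so any ``triple-point formula'' one invokes is again an excess-intersection statement (this is why the paper, in the analogous surface computations, has to appeal to Colley's stationary multiple-point theory). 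The paper avoids all of this by citing Berzolari's 1895 formula through Bertin \cite{Ber}. To close your proof you would need either to carry out the residual intersection on the blow-up of $X^3$ along the diagonals, or to cite the trisecant formula as the paper does.
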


\begin{proof} The degree of the tangential surface $\mathcal T(X)$ is the Riemann-Hurwitz number
$2d+2g-2$. This coincides with the degree of the dual surface $X^\vee$. See
\cite[page 111]{Pie} for a geometric derivation and \cite{Joh} for
computational examples. The formula for the degree of the edge surface $\mathcal E(X)$ 
appears in \cite[Theorem 2.1]{RS2}. The proof given there is based on {\em De
Jonqui\'eres' Formula}. The degree of the trisecant surface $\mathcal{D}(X)$
is due to Berzolari who first found it in 1895. One finds Berzolari's
formula in Bertin's article \cite{Ber} on the geometry of 
$\mathcal{D}(X)$.
\end{proof}

\begin{table}[h]
  \centering
  \begin{tabular}{cccccc}
    \Xhline{2\arrayrulewidth}
$d$ & $g$ && $ {\rm deg}(\mathcal{T}(X)) $& 
$ {\rm deg}(\mathcal{E}(X))$ & 
$ {\rm deg}(\mathcal{D}(X))$ \\
\hline
3 & 0 && 4 & 0 & 0 \\
4 & 0 && 6 & 6 & 2 \\
4 & 1 && 8 & 8 & 0 \\
5 & 0 && 8 & 16 & 8 \\
5 & 1 && 10 & 20 & 5 \\
5 & 2 && 12 & 24 & 2 \\
6 & 0 && 10 & 30 & 20 \\
6 & 1 && 12 & 36 & 16 \\
6 & 2 && 14 & 42 & 12 \\
6 & 3 && 16 & 48 & 8 \\
6 & 4 && 18 & 54 & 4 \\
\Xhline{2\arrayrulewidth}
\end{tabular}
\caption{\label{tab:curves}  
Degrees of the components of the
visual event surface of a space curve}
\end{table}

Table \ref{tab:curves} summarizes the conclusion of Theorem~\ref{thm:curves}
for space curves of degree $d$ at most six. The genus $g$ ranges from $0$ to
{\em Castelnuovo's bound}. Note that, for fixed $d$ and increasing $g$, the
degree of $\mathcal{D}(X)$ decreases  while that of the others increases. In
particular, there is no trisecant surface for twisted cubic curves and
elliptic quartic curves (cf.~\cite[Proposition 1]{Ber}).

The edge surface $\mathcal{E}(X)$ is of importance in convex geometry
because the algebraic boundary of the convex hull of 
a real affine curve $X_\RR$ consists of 
$\mathcal{E}(X)$ and the tritangent planes of $X$.
This was shown by Ranestad and Sturmfels  in
\cite[\S 2]{RS2}, and in \cite[\S 3]{RS2} they describe a method for  computing
$\mathcal{E}(X)$ when $X$ is rational. This theme was picked up
by Seigal and Sturmfels  in their study of real tensor decompositions
\cite{SS}. According to \cite[\S 3]{SS}, the real rank two boundary of $X$
is the union $\mathcal{T}(X) \cup \mathcal{E}(X)$, so it is part
of the visual event surface of $X$.
The study of curves in the present section is thus a
further development of \cite{RS2, SS}.

Consider now the projection $\pi_z: X \subset \PP^3\dashrightarrow \PP^2$ from
a  center $z \in \PP^3 \backslash X$. The target $\PP^2$ has two intrinsic
realizations. These live in the ambient spaces ${\rm Gr}(1,\PP^3)$ and
$(\PP^3)^*$ respectively. The first is the surface $\alpha(z)$ of all lines
in  $\PP^3$ that contain $z$. The second is the plane $z^\vee$ of all planes
in $\PP^3$ that contain $z$. Basic projective geometry yields the following
characterizations of the image curve $C_z(X)$ in these intrinsic realizations of the image plane.

\begin{proposition} \label{prop:visualcone} The image $C_z(X)$ of our curve
 $X$ is projectively equivalent to the curve $\alpha(z) \cap {\rm Ch}(X) $ in
 the Grassmannian $ {\rm Gr}(1,\PP^3)$. The dual curve $(C_z(X))^\vee$ in $
 (\PP^2)^*$ is projectively equivalent to the curve $z^\vee \cap X^\vee$ in
 the dual projective space $(\PP^3)^*$.
\end{proposition}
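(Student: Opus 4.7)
The plan is to establish both projective equivalences by first recasting the projection $\pi_z$ intrinsically and then unpacking the definitions of ${\rm Ch}(X)$ and $X^\vee$.

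For the first statement, I would identify the target $\PP^2$ of $\pi_z$ with the Schubert surface $\alpha(z) \subset \Gr$ of lines through $z$, via the canonical projective isomorphism that sends $\pi_z(x)$ to the line $\overline{xz}$. Under this identification, a point of $\PP^2$ lies in the image curve $C_z(X)$ precisely when its corresponding line through $z$ meets $X$; by the very definition of the Chow threefold, this is exactly the condition for membership in ${\rm Ch}(X)$. Since $z \notin X$ for the general viewpoints considered, the correspondence is everywhere defined, giving the desired projective equivalence $C_z(X) \cong \alpha(z) \cap {\rm Ch}(X)$.

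For the second statement I would dualize the construction above: each line $L \subset \PP^2$ pulls back to the plane $\pi_z^{-1}(L) \cup \{z\}$ in $\PP^3$ passing through $z$, and this yields a projective isomorphism $(\PP^2)^* \xrightarrow{\sim} z^\vee$. It then suffices to show that tangent lines of $C_z(X)$ correspond under it to planes through $z$ tangent to $X$. At a general smooth point $x \in X$, the differential $d\pi_z$ maps $T_xX$ onto $T_{\pi_z(x)}C_z(X)$, so the preimage plane is $\langle z, T_xX\rangle$; this plane contains $T_xX$, is therefore tangent to $X$ at $x$, and hence belongs to $z^\vee \cap X^\vee$. Taking the Zariski closure over the smooth locus identifies $(C_z(X))^\vee$ with $z^\vee \cap X^\vee$.

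The only subtlety lies at the finitely many nodes of $C_z(X)$, which come from pairs of points of $X$ that are collinear with $z$. Because $(C_z(X))^\vee$ is defined as the closure of tangents at smooth points, the nodes contribute nothing extra: each node's two branches correspond, via the isomorphism, to the two distinct tangent planes of $X$ through $z$ determined by the corresponding collinear pair on $X$, and these already appear on both sides of the identification. No further singularity analysis is required.
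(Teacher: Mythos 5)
Your proof is correct and is exactly the ``basic projective geometry'' argument that the paper alludes to immediately before the proposition (the paper itself supplies no explicit proof): identify $\PP^2$ with the $\alpha$-plane $\alpha(z)$ of lines through $z$ and $(\PP^2)^*$ with the plane $z^\vee$ of planes through $z$, and match membership conditions with the definitions of ${\rm Ch}(X)$ and $X^\vee$. Your closing remark on nodes is a harmless extra check; since both dual curves are defined as closures of tangent loci at smooth points, the generic correspondence $T_{\pi_z(x)}C_z(X)\leftrightarrow\langle z,T_xX\rangle$ already suffices.
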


In computer vision, the term {\em visual cone} is used for the union of all
lines in the pre-image of a set in $\PP^2$. The visual cone of a curve is a
developable surface in $\PP^3$. The dual of the visual cone associated with
$C_z(X)$ is the plane curve $z^\vee \cap X^\vee$. Hence,
Proposition~\ref{prop:visualcone} describes the curves in ${\rm Gr}(1,\PP^3)$
and $(\PP^3)^*$ that encode this visual cone, as discussed after Theorem
\ref{thm:everydev}.

Our task in this section is to solve the following computer algebra problem. Given a general space
curve $X$, compute the ruled surfaces $\mathcal{T}(X)$, $\mathcal{E}(X)$ and
$\mathcal{D}(X)$. Here the input is either the ideal of $X$, or a parametric
representation of $X$. The output is the defining polynomial $f$ of the
surface $\mathcal{S}_C$ in $\PP^3$. If the polynomial $f$ is too
large, we compute the ideal of the curve $C$ in ${\rm Gr}(1,\PP^3)$ or,
when $\mathcal S_C$ is developable, the ideal of the dual
$\mathcal S_C^\vee$ in~$(\PP^3)^*$.

\smallskip

We use the symbols $\mathcal{T}^\ell(X)$,
$\mathcal{E}^\ell(X) $ and $\mathcal{D}^\ell(X)$
to denote the curves in the Grassmannian ${\rm Gr}(1,\PP^3)$
that represent the surfaces $\mathcal{T}(X)$, $\mathcal{E}(X)$
and $\mathcal{D}(X)$.  The degrees of these curves in $\PP^5$
are the numbers in Theorem \ref{thm:curves}.
Two of the curves appear on the lower right in Figure~\ref{fig:curve-graph}.

The surfaces $\mathcal{T}(X)$ and $\mathcal{E}(X)$ are developable, so 
they can also be represented by their dual curves in $(\PP^3)^*$.
We use the same notation as the left diagram in Figure~\ref{fig:curve-graph}:
$$
\mathcal{T}^p(X) \,:= \,\mathcal{T}(X)^\vee \qquad {\rm and} \qquad
\mathcal{E}^p(X)\, :=\, \mathcal{E}(X)^\vee.
$$
Here  the index ``$p$'' stands for planes. The earlier
used upper index ``$\ell$'' stands for lines.
The trisecant surface $\mathcal{D}(X)$ is ruled but not developable,
so it has no associated curve in $(\PP^3)^*$.

Many space curves arising in applied contexts lie in the zero set of a quadratic polynomial.
A generic curve has this property when its genus $g$ is maximal with respect
to the Castelnuovo bound~\cite[Thm. 6.4, page 351]{Har}. We now focus on that
special case. Later in this section we address our computational task for
general curves that are not on a quadric.

Let $X$ be a general curve on a smooth quadric $Q$ in $\PP^3$. 
Any line $L$ in $\PP^3$ that intersects the quadric $Q$ in three points must
lie on $Q$. Hence $L$ lies in one of the two rulings of $Q$.

\begin{remark} \label{rem:ruling}
If $d \geq 4$ and $(d,g) \not= (4,1)$ and
 $X$ lies on a quadric $Q$, then the trisecant surface $\mathcal{D}(X)$
coincides with the quadric surface $Q$, taken with an appropriate multiplicity.
\end{remark}

To derive that multiplicity, and to set the stage for computing
$\mathcal{T}(X)$ and $\mathcal{E}(X)$,  we perform a linear change of coordinates
in $\PP^3$ so that the equation of $Q$ equals $x_1 x_4 = x_2 x_3$.
Thus we identify $Q$ with the Segre surface $\PP^1 \times \PP^1$.
We fix affine coordinates $((1:s), (1:t))$.

\begin{corollary}
\label{cor:curves}
If $X$ has bidegree $(a,b)$ on $Q = \PP^1 \times \PP^1 \subset \PP^3$,
then the degrees of the tangential surface $\mathcal{T}(X)$,
 the edge surface $\mathcal{E}(X)$
and the (non-reduced) trisecant surface $\mathcal{D}(X)$~are 
$$
\begin{matrix}
{\rm deg}\bigl(\mathcal{T}(X) \bigr) & = & 2ab, \\
{\rm deg}\bigl(\mathcal{E}(X) \bigr)& =  & 2ab(a+b-3),  \\
 {\rm deg}\bigl(\mathcal{D}(X) \bigr) & = & 2  \bigl(\binom{a}{3} + \binom{b}{3} \bigr). \\
 \end{matrix}
 $$
\end{corollary}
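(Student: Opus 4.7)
The plan is to derive the three formulas by specializing Theorem \ref{thm:curves} to a smooth curve $X$ of bidegree $(a,b)$ on $Q\cong\PP^1\times\PP^1$. By the adjunction formula on $Q$ (whose canonical class is $-2H_1-2H_2$), such a curve has degree $d=a+b$ in $\PP^3$ and arithmetic genus $g=(a-1)(b-1)$. Plugging into the formulas of Theorem \ref{thm:curves} and simplifying yields the stated expressions: $2(d+g-1) = 2ab$ and $2(d-3)(d+g-1) = 2ab(a+b-3)$ for the tangent developable and the edge surface respectively. The identity
\[
\frac{(d-1)(d-2)(d-3)}{3}-(d-2)g \;=\; \frac{a(a-1)(a-2)+b(b-1)(b-2)}{3} \;=\; 2\!\left(\binom{a}{3}+\binom{b}{3}\right)
\]
after substituting $d=a+b$ and $g=(a-1)(b-1)$ is a routine algebraic verification.

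For $\mathcal T(X)$ and $\mathcal E(X)$, this reduction to Theorem \ref{thm:curves} is the full argument: the proofs cited there (Riemann--Hurwitz for the tangent developable, De Jonqui\`eres' formula for the edge surface) depend only on the invariants $d$ and $g$ of a smooth non-degenerate irreducible curve, so they apply verbatim when $X$ sits on a quadric. For $\mathcal D(X)$, however, Remark \ref{rem:ruling} shows that the geometry degenerates: any trisecant to $X$ meets $Q$ in at least three points and hence lies on $Q$, so the set-theoretic trisecant surface is $Q$ itself, sitting in one of the two rulings. The formula of Theorem \ref{thm:curves} must therefore be read as a cycle-theoretic degree, and the true content of the corollary for $\mathcal D(X)$ is the determination of the multiplicity with which $Q$ occurs.

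I would verify this multiplicity by a direct count on the two rulings of $Q$. A generic line in the first ruling meets $X$ in exactly $a$ points and thus is a trisecant in $\binom{a}{3}$ distinct ways; it therefore contributes with multiplicity $\binom{a}{3}$ to the trisecant cycle along the $1$-parameter family that sweeps out $Q$. The second ruling contributes $\binom{b}{3}$ in the same fashion. Since each ruling covers the quadric $Q$ (degree $2$ in $\PP^3$), the total degree of the non-reduced trisecant surface is $2\binom{a}{3} + 2\binom{b}{3}$, matching the algebraic specialization above. The main obstacle is conceptual rather than computational: one must accept that when the expected irreducible trisecant surface collapses onto a multiple of $Q$, the correct invariant is the cycle-theoretic degree, which agrees with both Berzolari's formula and the direct enumerative count.
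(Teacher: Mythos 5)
Your proof is correct and follows essentially the same route as the paper: compute $d=a+b$ and $g=(a-1)(b-1)$ (the paper counts interior lattice points of the Newton polygon rather than using adjunction, an immaterial difference) and substitute into Theorem \ref{thm:curves}. Your ruling-by-ruling count of the multiplicity $\binom{a}{3}+\binom{b}{3}$ of $Q$ in $\mathcal{D}(X)$ is exactly the explanation the paper gives in the paragraph immediately following the corollary.
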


\begin{proof}
The affine polynomial $f(s,t)$ that defines $X$
has degree $a$ in $s$ and degree $b$ in $t$. Our curve $X$ has
degree $d = a+b$ and genus $g = (a-1)(b-1)$. Indeed, it is a basic fact from
toric geometry that the genus $g$
is the number of interior lattice points of the 
Newton polygon, which is a rectangle of size $a \times b$.  
Moreover, a general curve $X$ in $\PP^3$ of that degree and genus
lies on a quadric, so we can apply Theorem \ref{thm:curves}.
We substitute $d = a+b$ and $g = (a-1)(b-1)$ into the formulas
given there. This yields the formulas in Corollary \ref{cor:curves}.
\end{proof}

We now see that the
``appropriate multiplicity'' in Remark \ref{rem:ruling}
is $\binom{a}{3} + \binom{b}{3}$.
 The two summands correspond to the two rulings of $Q$.
Each line in the first ruling meets $X$ in $a$ points, so it
counts as a trisecant with multiplicity $\binom{a}{3}$, and
ditto with $b$ for the second ruling.

We shall present algorithms for computing $\mathcal{T}(X)$ and $\mathcal{E}(X)$
from the affine polynomial $f(s,t)$ that defines $X$ in $\PP^1 \times \PP^1$.
Recall that a change of coordinates is
 required in order to apply our method in situations
when $X$ is given by its ideal in $\RR[x_1,x_2,x_3,x_4]$.
We illustrate this point, and later our algorithms,
for the case when $a=3, b=2$ and hence $d=5, g=2$.

\begin{example} \label{ex:bicanonical} \rm Let $X$ be
the bicanonical embedding of a genus $2$
curve. It has degree $5$ in $\PP^3$.
The curve is arithmetically Cohen-Macaulay. Its ideal is given by
the $2 \times 2$-minors of
\begin{equation}
\label{eq:twobythree}
 \begin{pmatrix}
\ell_{11} & \ell_{12} & q_{1} \\
\ell_{21} & \ell_{22} & q_{2} 
\end{pmatrix}.
\end{equation}
The $\ell_{ij}$ are linear forms and the $q_i$
are quadratic forms, found by computing syzygies of $X$.
Assuming $\ell_{11}, \ell_{12}, \ell_{21}, \ell_{22}$ to be linearly independent,
we change coordinates and write this~as
$$ \begin{pmatrix}
x_1 & x_3  & q_{1} (x_1,x_2,x_3,x_4)  \\
x_2 & x_4 & q_{2} (x_1,x_3)
\end{pmatrix} \quad = \quad
\begin{pmatrix}
1 & t &  g_1(s,t) \\
s & st & g_2(t)
\end{pmatrix} .
$$
Here, the polynomial $g_1$ has bidegree $(2,2)$ in $(s,t)$,
the polynomial $g_2$ has degree $2$ in $t$, and we used column operations
to eliminate $x_2$ and $x_4$ from $q_2$.
One of the $2 \times 2$-minors of this $2 \times 3$-matrix is the equation
of bidegree $(3,2)$ that defines our curve in the affine plane:
$$ f(s,t)\,\, = \,\,g_2(t) - s g_1(s,t). $$
Conversely, every polynomial of bidegree $(3,2)$
in $(s,t)$ has a matrix representation (\ref{eq:twobythree}).
\hfill $\diamondsuit$
\end{example}

To compute the tangential surface $\mathcal{T}(X)$ from $f(s,t)$, we form the $3 \times 4$-matrix
\begin{equation*}
\qquad M \,\, = \,\, \begin{pmatrix}
\,1 & \,s & \,t\, & st \\
\, 0 & \! -f_t & \,f_s \,& s f_s - t f_t \\
\,x_1 & x_2 & \,x_3 \,& x_4
\end{pmatrix} , \qquad
\hbox{where}\,\,
 f_s = \frac{\partial f}{\partial s}\,\, {\rm and} \,\,
 f_t = \frac{\partial f}{\partial t}.
\end{equation*}
The first two rows of $M$ are linearly independent,
and they span  the tangent line at the 
point of $X$ corresponding to $(s,t)$.
The second row is the image of the tangent direction $(-f_t,f_s)$ of the affine
curve $\{f=0\}$ under the linear map given by the Jacobian of $\,\mathbb{C}^2 \rightarrow \mathbb{C}^3$,
$\,(s,t) \mapsto (s,t,st)$.
 Another point $(x_1:x_2:x_3:x_4)$ lies on
that tangent line in $\PP^3$ precisely when ${\rm rank}(M) = 2$.
The following ideal is generated by five polynomials in $\RR[s,t,\,x_1,x_2,x_3,x_4]$:
\begin{equation}
\label{eq:sixvar}
 \langle \,f \,\rangle +  \langle \hbox{ $3 \times 3$-minors of $M$ } \rangle .
\end{equation}
Our argument implies the following method for finding the tangential surface of degree $2ab$:

\begin{proposition} Eliminating the unknowns $s$ and $t$ from the ideal (\ref{eq:sixvar})
yields a principal ideal in $\RR[x_1,x_2,x_3,x_4]$. Its generator is the
polynomial defining the tangential surface $\mathcal{T}(X)$.
\end{proposition}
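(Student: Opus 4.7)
The plan is to interpret the ideal~\eqref{eq:sixvar} as cutting out an incidence correspondence in $\mathbb A^2\times\PP^3$, identify the image of its projection to $\PP^3$ with the tangential surface, and conclude by elimination theory together with the UFD property of $\RR[x_1,x_2,x_3,x_4]$.

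First, I would verify the geometric meaning of the matrix $M$. Row one is the parametrization $\phi(s,t)=(1{:}s{:}t{:}st)$ of $X$ in its affine chart. At a smooth point of the affine curve $\{f=0\}$ the tangent direction is $(-f_t,f_s)$, and pushing it forward through the Jacobian of $\phi$ yields exactly row two: $(0,-f_t,f_s,\,sf_s-tf_t)$. Smoothness of $X$ forces $(f_s,f_t)\neq(0,0)$ at every affine zero of $f$, so rows one and two of $M$ are linearly independent and span the projective tangent line $T_{\phi(s,t)}X\subset\PP^3$. The vanishing of the $3\times 3$-minors of $M$ together with $f(s,t)=0$ is therefore equivalent to $(x_1{:}\cdots{:}x_4)\in T_{\phi(s,t)}X$. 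Hence the subscheme cut out by \eqref{eq:sixvar} in $\mathbb A^2_{(s,t)}\times\PP^3_x$ is the incidence variety
$$\mathcal I_X \,=\, \bigl\{\,((s,t),x) \,:\, f(s,t)=0,\ x\in T_{\phi(s,t)}X\,\bigr\},$$
which is a $\PP^1$-bundle over the smooth affine curve $\{f=0\}\subset\mathbb A^2$; in particular $\mathcal I_X$ is reduced, irreducible, and two-dimensional.

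Second, I would apply elimination theory. The intersection of~\eqref{eq:sixvar} with $\RR[x_1,x_2,x_3,x_4]$ is the ideal of the scheme-theoretic image of $\mathcal I_X$ under the second projection $\mathbb A^2\times\PP^3\to\PP^3$. By construction this image is contained in $\mathcal T(X)$, and because $\mathbb A^2$ is Zariski dense in $\PP^1\times\PP^1$, the image is also dense in $\mathcal T(X)$. The tangential surface is an irreducible hypersurface in $\PP^3$ of degree $2ab$ by Corollary~\ref{cor:curves}, so its ideal is a height-one prime in the UFD $\RR[x_1,x_2,x_3,x_4]$, hence principal, and generated by the irreducible homogeneous polynomial defining $\mathcal T(X)$.

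The main obstacle is the step from ``the elimination ideal has the expected radical'' to ``the elimination ideal is itself principal''. For this one needs $\mathcal I_X$ to be a reduced irreducible scheme, not just set-theoretically irreducible. The $\PP^1$-bundle description handles precisely this issue, but one should still rule out stray embedded or lower-dimensional primary components of~\eqref{eq:sixvar}: any such component would project to a proper subvariety of $\mathcal T(X)$, and hence could not contribute an additional irreducible factor to the generator. The rest of the argument is bookkeeping, and the generator has degree $2ab$ as predicted by Corollary~\ref{cor:curves}.
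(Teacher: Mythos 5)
Your argument is correct and follows the same route the paper takes: the paper's ``proof'' is precisely the preceding discussion that rows one and two of $M$ span the tangent line at $\phi(s,t)$, so that the rank condition together with $f=0$ cuts out the incidence variety of points on tangent lines, and elimination of $s,t$ projects this to $\mathcal{T}(X)$. The only difference is that you make explicit the elimination-theoretic bookkeeping (reducedness and irreducibility of the incidence scheme, and the UFD argument for principality) that the paper leaves implicit.
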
 

\begin{example} \label{ex:bicanonical2} \rm
Let $d=5, g=2$ as in Example \ref{ex:bicanonical}, and fix
the curve $X \subset \PP^1  \times \PP^1$ defined~by
\begin{equation}
\label{eq:threetwocurve}
  f \,\,=\,\, s^3 t^2 + s^3 + t^2 + s + t + 1
  \end{equation}
The polynomial that defines the tangential surface $\mathcal{T}(X)$ has degree $12$. It looks like this:
$$ 
93 x_1^{12}+310 x_1^{11} x_2+341 x_1^{10} x_2^2+558 x_1^9 x_2^3+1054 x_1^8 x_2^4+744 x_1^7 x_2^5
+837 x_1^6x_2^6
+ \,\cdots\, +216 x_2 x_4^{11}+108x_4^{12}.
$$
This polynomial has $432$ monomials, out of
the $\binom{15}{3} = 455$ possible ones of degree $12$.
\hfill $\diamondsuit$
\end{example}

We now consider the edge surface $\mathcal{E}(X)$. Its
degree is $2ab(a+b-3)$. We  shall compute its dual representation $\mathcal{E}^p(X)$.
Planes in $\PP^3$ correspond to curves of bidegree $(1,1)$ in $\PP^1 \times \PP^1$:
\begin{equation}
\label{eq:oneonecurve}
 y_1+ y_2 s + y_3 t + y_4 st \,\,= \,\, 0.
 \end{equation}
We solve for $s$, substitute into $f(s,t)$, and
clear denominators. The result is a polynomial in one variable $t$
of degree $a+b$. We seek the condition that this 
 has two double roots, corresponding to $m=2+2$ in
Figure \ref{fig:curve-graph}. This condition defines the curve
$\mathcal{E}^p(X)$ in   $(\PP^3)^*$.

\begin{example} \label{ex:binonical3} \rm
Let $d=5, g=2$ as before in Example \ref{ex:bicanonical}.
Fix the curve $X$ in Example \ref{ex:bicanonical2}.
We shall compute the dual curve $\mathcal{E}^p(X)$
to the edge surface.
The moving curve  (\ref{eq:oneonecurve}) has five intersection points 
with the fixed curve $f=0$ in (\ref{eq:threetwocurve}). Their $t$-coordinates
are the roots of
\begin{equation}
\label{eq:quintic} c_0 + c_1 t + c_2 t^2 + c_3 t^3 + c_4 t^4 + c_5 t^5 \,=\, 0 , 
\end{equation}
where the $c_i$ are quadratic polynomials in $y_1,y_2,y_3,y_4$.
Regarding the coefficients as unknowns, we pre-compute the ideal
$\Delta_{(2,2)}(5) \subset \RR[c_0,c_1,c_2,c_3,c_4,c_5]$ whose variety consists
of quintics (\ref{eq:quintic}) with two double roots.
The ideal $\Delta_{(2,2)}(5)$ has codimension $2$ and degree $12$. It is generated
by $10$ quintics in the $c_i$, as seen in the row labeled 
$\lambda = 221$ in  \cite[Table 1]{LS}. Let $I$ be the ideal obtained from $\Delta_{(2,2)}(5)$
by replacing the $c_i$ with the quadrics in $y_1,y_2,y_3,y_4$
that represent the specific curve $X$, and then
saturating by the irrelevant ideal $\langle y_1,y_2,y_3,y_4 \rangle$.
The variety of $I$ is the curve $\mathcal{E}^p(X)$ in $(\PP^3)^*$.
The ideal $I$ has $14$ minimal generators, all of degree $10$,
with large integer coefficients.
This is the dual representation of the edge surface.

Computing $\mathcal{E}(X)$ by directly dualizing $\mathcal{E}^p(X)$ rarely terminates in practice.
It is easier to intersect
$\mathcal{E}(X) $ with lines or planes, as explained in Section~\ref{sec2},
around  (\ref{eq:linepara}) and (\ref{eq:binfor}).
\hfill $\diamondsuit$
\end{example}

We now consider curves $X$ that need not lie on a quadric $Q$.
Let us first assume that $X$ is the image 
of a variety $Y$ in a higher-dimensional space $\PP^d$ under a linear projection
$\alpha: \PP^d \dashrightarrow \PP^3$. This allows us to preprocess $Y$,
especially if the {\em Chow form} of $Y$ is known.

This approach works well when $X$ is rational. Here $Y$ is
the {\em rational normal curve}  in $\PP^d$, which is parameterized by
$ \bigl(\,1: t:t^2: \cdots : t^d \,\bigr)$.
Let $\alpha : \PP^d \dashrightarrow \PP^3$ be the linear projection that maps
$Y$ onto our curve $X$. We write
 $A$ for the $4 \times (d+1)$ matrix that represents $\alpha$.

 We first explain the computation of the tangential surface $\mathcal{T}(X)$. 
  Let $s$ be an unknown and let $Q$ be the skew-symmetric $4 {\times} 4$-matrix obtained from (\ref{eq:skewsymP}) by substituting to dual Pl\"ucker coordinates.
 We form the ideal in $\RR[s, q_{12},q_{13},\ldots,q_{34}] $
that is generated by the Pl\"ucker quadric  $q_{12} q_{34} - q_{13} q_{24} + q_{14} q_{23} $
and  the eight entries of the $4 \times 2$ matrix 
\begin{equation*}
Q \cdot A \cdot
\begin{pmatrix}
1 & s & s^2 & s^3 & s^4 & \cdots & s^d \\
0 & 1  & 2 s & 3s^2 & 4s^3 & \cdots & d s^{d-1}
\end{pmatrix}^{\!\! T}.
\end{equation*}
Eliminating $s$ and saturating with respect to the $q_{ij}$
 now yields the ideal of the  curve $\mathcal{T}^\ell(X)$.
From this we can compute the defining polynomial of
$\mathcal{T}(X)$ via (\ref{eq:skewsymP}).
The edge surface $\mathcal{E}(X)$ can be computed similarly.
This was also discussed in \cite[\S 2]{RS2} and  in \cite[\S 3]{SS}.

We now compute
the trisecant surface $\mathcal{D}(X)$ of a rational curve $X$ of degree $d$ in $\PP^3$.
The Chow form of $Y$ is the resultant of two binary forms of degree $d$.
We write this as the determinant of the {\em B\'ezout matrix} $B(r)$.
This is a symmetric $d \times d$-matrix whose entries are linear forms in the Pl\"ucker
coordinates $r_{ij}$ of $(d-2)$-planes in $\PP^d$.
For the formula we refer to equation (1.18) on page 402 in Section III.12.1  of \cite{GKZ}.
The  B\'ezout matrix for $d=6$~equals
\begin{equation*}
 B(r) \,\,\, = \,\,\,
\begin{pmatrix}
r_{12}    & r_{13}  & r_{14} &         r_{15} &        r_{16} &    r_{17} \\
r_{13} & r_{14}+r_{23} &  r_{15}+r_{24} &     r_{16}+r_{25} &     r_{17}+r_{26} & r_{27} \\
r_{14} & r_{15}+r_{24} &  r_{16}+r_{25}+r_{34} & r_{17}+r_{26}+r_{35}  & r_{27}+r_{36} & r_{37} \\
r_{15} & r_{16}+r_{25} & r_{17}+r_{26}+r_{35} & r_{27}+r_{36}+r_{45} &  r_{37}+r_{46} & r_{47} \\
r_{16} &  r_{17}+r_{26} & r_{27}+r_{36} &    r_{37}+r_{46} &    r_{47}+r_{56} & r_{57}  \\
r_{17} & r_{27} &     r_{37} &         r_{47} &         r_{57} &    r_{67}
\end{pmatrix} \quad
\end{equation*}

We shall use the following fact that is well-known in computer algebra; see
\cite[page 1228]{ADG}.

\begin{lemma} \label{lem:lalo}
The minors of the B\'ezout matrix $B(r)$ having size $d-k+1$ define
an irreducible variety of codimension $k$ in the Grassmannian
of $(d-2)$-planes in $\PP^d$. General points $q$ on this 
variety represent pairs of univariate polynomials
of degree $d$ that have $k$ common zeros.
\end{lemma}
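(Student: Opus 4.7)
The plan is to use the rational normal curve parameterization $t \mapsto (1:t:\cdots:t^d)$ of $Y \subset \PP^d$ to identify linear forms on $\PP^d$ with polynomials of degree at most $d$ in one variable $t$. Under this identification, a $(d{-}2)$-plane in $\PP^d$ is cut out by a two-dimensional subspace of linear forms, and hence corresponds to a pencil $\langle f, g \rangle$ of polynomials of degree at most $d$. The matrix $B(r)$ in the statement then becomes the classical B\'ezout matrix $B(f, g)$ of this pencil, whose entries are bilinear in the coefficients of $f, g$ and hence linear in the Pl\"ucker coordinates $r_{ij}$, and whose determinant equals the resultant $\mathrm{Res}(f,g)$ up to sign.

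The first step is to invoke the classical rank formula for the B\'ezout matrix:
$$
\mathrm{rank}\, B(f,g) \,=\, d \,-\, \deg \gcd(f, g).
$$
This is the input recorded in \cite[page 1228]{ADG}, and it lies behind the description of the Chow form of the rational normal curve in \cite[Sec.~III.12.1]{GKZ}. Granting this formula, the vanishing locus of all $(d{-}k{+}1) \times (d{-}k{+}1)$-minors of $B(r)$ is, set-theoretically, the locus
$$
V_k \,=\, \bigl\{\, \langle f, g \rangle \,:\, \deg \gcd(f,g) \geq k \,\bigr\}
$$
of pencils whose members share at least $k$ common zeros in $\PP^1$. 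This already verifies the second sentence of the lemma.

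The second step is to prove that $V_k$ is irreducible of codimension exactly $k$ in the Grassmannian of $(d{-}2)$-planes in $\PP^d$ (which has dimension $2(d-1)$). I would exhibit the surjective morphism
$$
\Phi \,:\, \PP^k \times \mathrm{Gr}(1, \PP^{d-k}) \,\longrightarrow\, V_k, \qquad
\bigl( h,\, \langle u, v \rangle \bigr) \,\longmapsto\, \langle hu,\, hv \rangle,
$$
where $h$ is a polynomial of degree at most $k$ viewed projectively, and $\langle u, v \rangle$ parameterizes pencils of polynomials of degree at most $d-k$. Surjectivity onto $V_k$ is immediate: any pencil with gcd of degree $\geq k$ can be written in this form by pulling out any degree-$k$ factor of the gcd. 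On the dense open locus where $\gcd(u,v) = 1$ and $\deg h = k$, the gcd of the image pencil recovers $h$ up to scalar, so $\Phi$ is generically injective. Since the domain is irreducible of dimension $k + 2(d - k - 1) = 2d - k - 2$, the image $V_k$ is irreducible of the same dimension, i.e., of codimension exactly $k$ in the ambient Grassmannian.

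The main obstacle is Step~1, the rank-gcd formula for the B\'ezout matrix. Once this classical fact is accepted from the cited references, the remaining steps are a direct surjective parameterization and an elementary dimension count, with the two factors $\PP^k$ and $\mathrm{Gr}(1,\PP^{d-k})$ transparently encoding the ``choose a degree-$k$ common factor, then choose a pencil of cofactors'' structure of a general point of $V_k$.
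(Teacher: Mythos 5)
Your proposal is correct, and it actually does more than the paper, which offers no proof of this lemma at all beyond the pointer to \cite[page 1228]{ADG}. Both you and the paper take the same external input --- the classical fact that the B\'ezout matrix of a pencil of binary forms of degree $d$ has rank $d - \deg\gcd$, which is precisely what the cited reference establishes --- so the set-theoretic description of the minor locus is handled identically. What you add is a self-contained justification of irreducibility and codimension: the surjective morphism $\Phi:\PP^k\times\mathrm{Gr}(1,\PP^{d-k})\to V_k$, $(h,\langle u,v\rangle)\mapsto\langle hu,hv\rangle$, with source of dimension $k+2(d-k-1)=2d-k-2$ inside the ambient Grassmannian of dimension $2(d-1)$, giving codimension exactly $k$; and generic injectivity (over a pencil whose gcd has degree exactly $k$, the fiber is a single point, since any degree-$k$ common factor must equal the gcd up to scalar) guarantees the image has the full dimension of the source. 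Since the source is projective and irreducible, the image is closed and irreducible, so $V_k$ is indeed the variety cut out by the $(d-k+1)\times(d-k+1)$ minors. The only point worth making explicit is that ``polynomials of degree at most $d$'' should be read as binary forms of degree $d$ (so that roots at infinity are counted in the gcd); with that convention your $\PP^k$ of common factors and your pencils of cofactors are exactly the right parameter spaces, and the argument is complete.
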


The second exterior power $\wedge_2 A$
of the matrix $A$ is a matrix of format  $6 \times \binom{d+1}{2}$.
We write $p = (p_{12}, p_{13},p_{14},p_{23},p_{24},p_{34})$ for the
Pl\"ucker coordinates of a line in $\PP^3$.  The preimage
of the line $p$ under the projection $\alpha$ is the $(d-2)$-plane in $\PP^d$
with Pl\"ucker coordinates 
$r \,=\, p \cdot \wedge_2 A $.

\begin{proposition}
If the $4 \times (d{+}1)$-matrix A is sufficiently general,  then the ideal of 
$(d{-}2) \times (d{-}2)$-minors of the matrix
$ B \bigl( p \cdot \wedge_2 A \bigr) $ defines the
curve $\mathcal{D}^\ell(X)$ of degree  $2 \binom{d-1}{3}$ in $\PP^5$.
\end{proposition}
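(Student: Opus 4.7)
The plan is to reduce the trisecant condition on the curve $X \subset \PP^3$ to the condition that the preimage $(d{-}2)$-plane in $\PP^d$ meets the rational normal curve $Y$ in at least three points, and then invoke Lemma \ref{lem:lalo}. Throughout, I will assume $A$ is generic enough that the center of the projection $\alpha$ is disjoint from $Y$, that $\alpha$ is birational onto $X$, and that no special incidences (such as fibers of $\alpha$ being contained in trisecant planes) occur.

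First I would record the key linear-algebraic fact: for a line $L \subset \PP^3$ with Pl\"ucker vector $p$, the preimage $\alpha^{-1}(L)$ is a $(d{-}2)$-plane in $\PP^d$ whose Pl\"ucker vector is $p \cdot \wedge_2 A$. This is immediate by describing $L$ as the common zero set of two linear forms on $\CC^4$ and pulling these forms back along $A$; the second exterior power of the transpose of $A$ is exactly the linear map on (dual) Pl\"ucker coordinates. Next, $L$ is a trisecant of $X = \alpha(Y)$ if and only if $\alpha^{-1}(L) \cap Y$ has length at least three; this uses the birationality of $\alpha|_Y$ and the genericity of $A$. Applying Lemma \ref{lem:lalo} with $k = 3$, the $(d{-}2) \times (d{-}2)$-minors of $B(r)$ cut out (set-theoretically and, indeed, scheme-theoretically in codimension three) the locus of $(d{-}2)$-planes meeting $Y$ with multiplicity $\ge 3$. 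Substituting $r = p \cdot \wedge_2 A$ and intersecting with the Pl\"ucker quadric produces exactly the curve $\mathcal{D}^\ell(X)$ in $\Gr(1,\PP^3) \subset \PP^5$.

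For the degree, I would invoke Theorem \ref{thm:curves} with $g=0$: the trisecant surface $\mathcal{D}(X) \subset \PP^3$ has degree $\tfrac{(d-1)(d-2)(d-3)}{3} = 2\binom{d-1}{3}$. By Lemma \ref{lem:everyruled}, and because a general rational curve is trisecated at most once along each of its trisecants (so the map from $\mathcal{D}^\ell(X)$ to the rulings of $\mathcal{D}(X)$ is birational), the degree of the curve $\mathcal{D}^\ell(X)$ in $\PP^5$ equals the degree of the surface $\mathcal{D}(X)$ in $\PP^3$, yielding the claimed $2\binom{d-1}{3}$.

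The main obstacle is the scheme-theoretic claim that the minor ideal itself (after pullback by $\wedge_2 A$) already defines $\mathcal{D}^\ell(X)$, without having to pass to the radical or saturate against extraneous components. Two issues must be controlled: the linear map $\wedge_2 A$ embeds $\PP^5$ into the Pl\"ucker ambient space of $\Gr(d-2,\PP^d)$ transversely to the Bezout determinantal variety (so the codimension three condition is preserved and no spurious components of lower codimension appear on $\Gr(1,\PP^3)$), and the classical determinantal variety defined by the $(d-2)$-minors of $B(r)$ is Cohen-Macaulay and generically reduced, so transverse pullback remains reduced. Both transversality and avoidance of the base locus of $\alpha$ hold on a dense open set of matrices $A$, which is precisely the genericity hypothesis. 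This completes the identification of the ideal with that of $\mathcal{D}^\ell(X)$.
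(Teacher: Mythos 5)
Your proposal is correct and follows essentially the same route as the paper: both arguments identify the preimage $(d{-}2)$-plane $\alpha^{-1}(L)$ with Pl\"ucker vector $p \cdot \wedge_2 A$, apply Lemma~\ref{lem:lalo} with $k=3$ so that the corank condition on the B\'ezout matrix detects three common zeros (i.e.\ a trisecant), and obtain the degree from Theorem~\ref{thm:curves} with $g=0$ together with Lemma~\ref{lem:everyruled}. Your extra attention to transversality of the pullback and reducedness of the determinantal locus is more careful than the paper's terse set-theoretic argument, but it is not a different method.
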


\begin{proof} We use
Lemma~\ref{lem:lalo}. Intersecting the curve $X$ with two planes
in $\PP^3$ amounts to solving two univariate
polynomials of degree $d$.  The Chow form of $X$ is
the determinant of the specialized B\'ezout matrix 
$ B \bigl( p \cdot \wedge_2 A \bigr) $. The corank of that matrix 
is the number of common zeros. That number is $ 3$ when
the intersection of the two planes is a trisecant line of $X$. Thus, the line $p$
 is in the trisecant curve $\mathcal{D}^\ell(X)$
 precisely when $ \,B \bigl( p \cdot \wedge_2 A \bigr)\, $ has rank $\leq d-3$.
By Lemma \ref{lem:everyruled} and setting
$g=0$ in Theorem \ref{thm:curves},  the degree of
$\mathcal{D}^\ell(X)$ is $2 \binom{d-1}{3}$.
\end{proof}

For an illustration consider rational curves $X$ of degree $d=6$.  Then $A$ is a 
$4 \times 7$ matrix, and $\wedge_2 A$ is a $ 6 \times 21$ matrix.
The ideal of $4 \times 4$-minors of the $6 \times 6$-matrix
$ \,B \bigl( p \cdot \wedge_2 A \bigr)\, $ is minimally generated,
modulo the Pl\"ucker relation, by $45$ quartics.
After saturating with respect to the irrelevant
maximal ideal $\langle p_{12}, p_{13}, p_{14},
p_{23}, p_{24},p_{34} \rangle$, we obtain
the prime ideal $I_C$ of the curve $C$ of trisecants.  The ideal $I_C$ has
degree $20$ and is generated by $10 $ cubics.

The success of this computation relied on writing
the Chow form of $X$ as a determinant of a matrix whose entries are linear in Pl\"ucker coordinates.
Eisenbud, Schreyer and Weyman \cite[\S 4]{ESW} proved that such a formula exists for
all curves.
 See \cite[Proposition 4.2]{ESW} for a derivation of the
B\'ezout matrix $B(r)$ from the perspective of Ulrich sheaves, and 
\cite[Example 4.6]{ESW} for an extension to hyperelliptic curves.
Whenever we have such matrices explicitly, we get
the surface $\mathcal{D}(X)$ by imposing 
the corank $3$ constraints. Such
 matrix formulas for Chow forms can also
be derived for curves $X$ in $\PP^3$ that arise by intersecting certain nice varieties.

\begin{example} \rm
Let $X$ be the curve in $\PP^3$ defined by the
$3 \times 3$-minors of the $3 \times 4$-matrix 
$$ M(x) \quad = \quad \begin{pmatrix}
\,x_1+x_4 & x_2 -x_1 & x_3-x_2 & x_3+x_4 \\
\,x_4-x_3& x_1+x_4 & x_2-x_1 & x_2+x_3 \\
\,x_3-x_2 & x_2-x_3+x_4 & x_1+x_4 & x_1 
\end{pmatrix}.
$$
This curve has $(d,g) = (6,3)$.
By computing syzygies, we can represent every curve
of degree $6$ and genus $3$ via such a matrix with linear entries.
This follows from the {\em Hilbert-Burch Theorem}.
Let $Y$ be the variety of
$3 \times 4$-matrices of rank $\leq 2$.
The Chow form of $Y$ is the determinant of the following $6 \times 6$-matrix
in dual Pl\"ucker coordinates for lines in $\PP^{11}$:
\begin{equation}
\label{eq:bondal}
\begin{small}
 \begin{pmatrix}
r_{11,12} &  r_{11,22}+r_{21,12} &  r_{11,32}+r_{31,12} & r_{21,22} & r_{21,32}+r_{31,22} & r_{31,32} \\
r_{11,13} &  r_{11,23}+r_{21,13} &  r_{11,33}+r_{31,13} & r_{21,23} & r_{21,33}+r_{31,23} & r_{31,33} \\
r_{11,14} &  r_{11,24}+r_{21,14} &  r_{11,34}+r_{31,14} & r_{21,24} & r_{21,34}+r_{31,24} & r_{31,34} \\
r_{12,13} & r_{12,23}+r_{22,13} &  r_{12,33}+r_{32,13} & r_{22,23} & r_{22,33}+r_{32,23} & r_{32,33} \\
r_{12,14} & r_{12,24}+r_{22,14} &  r_{12,34}+r_{32,14} & r_{22,24} & r_{22,34}+r_{32,24} & r_{32,34} \\
r_{13,14} & r_{13,24}+r_{23,14} & r_{13,34}+r_{33,14}  & r_{23,24} & r_{23,34}+r_{33,24} & r_{33,34}
\end{pmatrix}.
\end{small}
\end{equation}
This matrix appears in \cite[page 472]{GKZ}. We now replace the Pl\"ucker coordinates 
$r_{ij,kl}$ by linear forms in the six coordinates 
$q_{ij} = a_i b_j - b_i a_j$ for lines in $\PP^3$. 
For instance,
$r_{11,12 } = q_{12}+q_{14}-q_{24}$,
$ r_{11,13} = -q_{12}+q_{13}+q_{24}-q_{34}$,
$ r_{11,14} = q_{13}+q_{14}-q_{34}$, $\ldots$.
These linear forms are obtained by setting
 $r_{ij,kl} = M(a)_{ij} M(b)_{kl} - M(b)_{ij} M(a)_{kl}$
where $a= (a_1,a_2,a_3,a_4)$ and $b=(b_1,b_2,b_3,b_4)$.

The trisecant curve $\mathcal{D}^\ell(X)$ is defined in 
${\rm Gr}(1,\PP^3) \subset \PP^5 $ by the $4 \times 4$-minors of the resulting matrix (\ref{eq:bondal}).
Saturating by $\langle q_{12}, q_{13}, q_{14},q_{23},q_{24},q_{34} \rangle $ yields
the prime ideal of $\mathcal{D}^\ell(X)$:
$$ \begin{small} \begin{matrix} I \,= \, \langle
q_{14} q_{23}-q_{13} q_{24}+q_{12} q_{34},\,
   q_{13} q_{23}-q_{23}^2-q_{14} q_{24}-q_{24}^2+q_{12} q_{34}-2 q_{13} q_{34}+
   2 q_{14} q_{34}+2 q_{23} q_{34},\, \\ \qquad
      q_{12} q_{23}-q_{23}^2-q_{12} q_{24}+q_{13} q_{24}-3 q_{14} q_{24}-q_{23} q_{24}
      -q_{12} q_{34}+2 q_{14} q_{34}{+}3 q_{23}  q_{34}{+}q_{24} q_{34}{-}2 q_{34}^2, \\ \quad
      q_{14}^2{+}q_{14} q_{24}{+}q_{24}^2{-}q_{12} q_{34}{-}2 q_{14} q_{34}{-}q_{23} q_{34}{+}q_{34}^2,\,
      q_{13}^2{-}q_{23}^2{-}q_{14} q_{24}{-}2 q_{13} q_{34}{+}q_{14} q_{34}{+}2 q_{23} q_{34}, \\
 q_{12} q_{14}-2 q_{14} q_{24}-q_{23} q_{24}-2 q_{24}^2+q_{12} q_{34}+2 q_{14} q_{34}+
 2 q_{23} q_{34}+2 q_{24} q_{34} -2 q_{34}^2, \\
   q_{12} q_{13}+2 q_{13} q_{14}-q_{23}^2-q_{12} q_{24}-q_{13} q_{24}-2 q_{14} q_{24}
  -q_{23} q_{24}+3 q_{23} q_{34}+2 q_{24}      q_{34}-2 q_{34}^2 \rangle .
      \end{matrix}
      \end{small}      $$
      From this, we easily find
      the octic equation of the trisecant surface $\mathcal{D}(X)$:
      $$       x_1^7 x_3-2 x_1^4 x_2^3 x_3+x_1 x_2^6 x_3
      +2 x_1^5 x_2 x_3^2 +2 x_1^4 x_2^2 x_3^2
      +2 x_1^3 x_2^3 x_3^2-2 x_1^2 x_2^4 x_3^2 + \cdots
      $$
   This polynomial uses $136$ of the $165 = \binom{8+3}{3} $  monomials of degree eight.
\hfill $\diamondsuit$
 \end{example}

The past few pages were devoted to specialized techniques that exploit the 
structure of a given curve $X$. Such techniques can be designed for
all entries in Table~\ref{tab:curves}. However, equally important are
general purpose methods that work for all curves. We close this
section by discussing the latter. The curve $X$ is given by its
ideal $I = \langle f_1,f_2,\ldots, f_k\rangle$ in $\RR[x_1,x_2,x_3,x_4]$.

The  edge surface $\mathcal{E}(X)$ was already discussed in
\cite{RS2, SS}. We therefore focus on the other two ruled surfaces
in Theorem~\ref{thm:curves}. The easier among them is the tangential surface $\mathcal{T}(X)$.
At any given point $p $ on the curve $X$, the tangent
line is defined by the linear equations
\begin{equation}\label{eq:tangential}
\begin{pmatrix} \frac{\partial f_1}{\partial x_1}(p) & \frac{\partial
f_1}{\partial x_2}(p) & \frac{\partial f_1}{\partial x_3}(p) & \frac{\partial
f_1}{\partial x_4}(p)\\
 \vdots & \vdots & \vdots & \vdots \\
\frac{\partial f_k}{\partial x_1}(p) & \frac{\partial f_k}{\partial x_2}(p) &
\frac{\partial f_k}{\partial x_3}(p) & \frac{\partial f_k}{\partial x_4}(p)
\end{pmatrix}  \cdot
\begin{pmatrix} x_1\\ x_2 \\ x_3 \\ x_4
\end{pmatrix} \quad = \quad \begin{pmatrix} 0 \\ 0 \\ 0 \\ 0 \end{pmatrix}.
\end{equation} 
To find the polynomial $F$ defining $\mathcal{T}(X)$, we take a
vector of variables $p = (y_1,y_2,y_3,y_4)$, and we augment $I$ with the constraints
\eqref{eq:tangential}. This gives an ideal in
 $\RR[x_1,x_2,x_3,x_4,y_1,y_2,y_3,y_4]$. From that ideal, we
saturate and eliminate the variables $y_1,y_2,y_3,y_4$.
The output is $\langle F \rangle$.

The trisecant surface $\mathcal{D}(X)$ will be represented by its curve
$\mathcal{D}^\ell(X) \subset {\rm Gr}(1,\PP^3)$. To compute this,
we parameterize the line in $\PP^3$ as in (\ref{eq:lineparaone}).
Suppose  for now that our curve is a complete intersection:
$X = \{f_1=f_2 =0\}$.
We want the univariate polynomials
$f_1(z(t))$ and $f_2(z(t))$ to have three common roots, i.e.,~their
greatest common divisor (GCD) has degree $\geq 3$. This 
 can be expressed using {\em subresultants} \cite{ADG}.  The vanishing of all
 subresultants of order $i=0,\ldots, r-1$ for two polynomials in $t$  means that their GCD has degree at
least~$r$.  In our case, we form the ideal given by the subresultant
coefficients of $f_1(z(t))$ and $f_2(z(t))$ of order $0,1$ and $2$ (together with
the Pl\"ucker relation). The ideal of the trisecant curve $\mathcal D^\ell(X)$
is obtained by saturating by the ideal of the leading coefficients of $f_1(z(t))$ and $f_2(z(t))$.

This approach generalizes to the case when $X$ is not a complete
intersection. Indeed, if $X$ is defined by $f_1,\ldots,f_k$, then we can use
the same strategy to impose that $s_1 f_1(z(t)) + \cdots + s_{k-1}
f_{k-1}(z(t))$ and $f_k(z(t))$ have three roots in common for any choice of
$s_1,\ldots,s_{k-1}$.

We conclude this section with an example that illustrates the last row of Table \ref{tab:curves}.

\begin{example} \rm
Let $X$ be the smooth curve of degree $6$ and genus $4$ in $\PP^3$ defined by
$$   x_1^2+x_2^2+x_3^2+x_4^2 \,\, = \,\,
 x_1^3+x_2^3+x_3^3+x_4^3 \,\, = \,\,  0.
$$
The above method easily yields the equation of degree $18$ for the tangential surface
$\mathcal{T}(X)$:
$$ \begin{small} \!
4 x_1^{12}x_2^6-12 x_1^{12} x_2^5 x_3-12 x_1^{12} x_2^5 x_4+21 x_1^{12} x_2^4 x_3^2  + 
\cdots + \underbar{13770} x_1^6 x_2^4 x_3^4 x_4^4
+ \cdots
+24 x_3^7 x_4^{11} + 4 x_3^6 x_4^{12}.
\end{small}
$$
This polynomial has $1094$ terms. Its largest coefficient is underlined.
A compact encoding is given by the $11$ quadratic generators of the
 ideal of $\mathcal{T}^\ell(X)$. It is also easy to compute the
quartic surface $\mathcal{D}(X)$, and it takes a little longer to
compute the degree $54$ curve $\mathcal{E}^\ell(X)$.
\hfill $\diamondsuit$
\end{example}

\section{Views of Surfaces}
\label{sec4}

We now turn to the visual events for a general surface $X$ in projective
$3$-space $\PP^3$. The six visual events associated with  $X$ were mentioned
in the Introduction in items (L) and (M). We shall explain these events and
how they give rise to the following five irreducible surfaces:

\begin{enumerate}
\item The {\em flecnodal surface} $\mathcal{F}(X)$ is the union of all lines $L$ with
contact of order $4$ at a point of~$X$. In other words, the equation of $X$ 
restricted to $L$ has a root of multiplicity~$4$.
\item The {\em cusp crossing surface} $\mathcal{C}(X)$ is the union of all lines $L$
with contact of order $3+2$ at two points of $X$, i.e.,~the equation for $X \cap L$ on $L$ has a 
triple root and a double~root.
\item The {\em tritangent surface} $\mathcal{T}(X)$ is the union of all lines $L$
with contact of order $2+2+2$ at three points of $X$, i.e.,~the equation for $X \cap L$ on $L$ has 
three double roots.
\item The {\em edge surface} $\mathcal{E}(X)$ is the envelope of the bitangent planes of $X$.
It is the union of all bitangent lines arising from these planes. This surface was denoted $(X^{[2]})^\vee$ in \cite{RS2}.
\item The {\em parabolic surface} $\mathcal{P}(X)$ is the envelope of all tangent planes that have contact of order $3$ with $X$. It is the union of all principal tangents at parabolic points \cite[\S A.1.2]{PPK}.
\end{enumerate}

The following theorem characterizes the structure of the visual event surface for $X$ in $ \PP^3$.

\begin{theorem}
\label{thm:surfaces}
For a general surface $X$ of degree $d$ in $\PP^3$, the visual event surface
$\mathcal{V}(X)$ decomposes into the five components listed above. The degrees 
of these surfaces are:
$$
\begin{matrix}
{\rm deg}\bigl(\mathcal{F}(X) \bigr) & = & 2d(d-3)(3d-2) ,&  \hbox{\rm \cite[\S 597]{Sal} and \cite[Prop.~4.5]{Pet}} \\
{\rm deg}\bigl(\mathcal{C}(X) \bigr)& =  & d (d-3)(d-4)(d^2+6d-4),  & \,\, \hbox{\rm \cite[\S 598]{Sal} and \cite[Prop.~4.12]{Pet}} \\
 {\rm deg}\bigl(\mathcal{T}(X) \bigr) & = &  \frac{1}{3} d(d-3)(d-4)(d-5)(d^2+3d-2), \,\, &  \,\,\hbox{\rm \cite[\S 599]{Sal}
  and \cite[Prop.~4.10]{Pet}} \\
 {\rm deg}\bigl(\mathcal{E}(X) \bigr) & = &  d(d-2)(d-3)(d^2+2d-4) , & \, \,\hbox{\rm \cite[\S 613]{Sal} and \cite[Prop.~4.16]{Pet}} \\
 {\rm deg}\bigl(\mathcal{P}(X) \bigr) & = &   2 d(d-2)(3d-4).  &  \hbox{\rm \cite[\S 608]{Sal} and \cite[Prop.~4.3]{Pet}}
 \end{matrix}
 $$
 \end{theorem}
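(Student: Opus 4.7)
The overall strategy is to compute, for each of the five components, the degree of the associated curve $C \subset \mathrm{Gr}(1,\PP^3) \subset \PP^5$; by Lemma~\ref{lem:everyruled} this equals the degree of the ruled surface $\mathcal{S}_C$ in $\PP^3$, and intersection-theoretically it is the number $C \cdot \sigma_1$, where $\sigma_1$ is the Schubert class of lines meeting a fixed line. Each component will be realized as (the image of) an incidence curve on a concrete parameter space on which standard Chern class computations are available.

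I would handle the three line-contact components $\mathcal{F}(X)$, $\mathcal{C}(X)$, $\mathcal{T}(X)$ by working on projectivized tangent-bundle constructions over $X$. On $\PP(T_X)$, a $\PP^1$-bundle over $X$, a point is a pair $(x,L)$ with $L$ tangent to $X$ at $x$. The jets of the defining equation of $X$ along $L$ determine a filtration of line bundles whose successive vanishing loci encode the conditions $m\geq 3$ and $m\geq 4$. This realizes $\mathcal{F}^\ell(X)$ as the degeneracy locus of a bundle map on $\PP(T_X)$, and $\sigma_1\cdot\mathcal{F}^\ell(X)$ becomes a Chern-class integral over $X$ involving only the hyperplane class; this should recover Salmon's $2d(d-3)(3d-2)$. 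The cusp-crossing and tritangent cases require the analogous construction on the symmetric products $X^{(2)}$ and $X^{(3)}$, or equivalently on a flag-type variety of lines through ordered tuples of points of $X$, where one imposes contact multiplicities at each point and then divides by the symmetric-group action while subtracting the contributions of diagonals where contact points collide into flecnodal or parabolic strata.

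For the two developable components $\mathcal{E}(X)$ and $\mathcal{P}(X)$, I would use the curve descriptions. The parabolic surface is the union of the double principal tangents at parabolic points; the parabolic curve on $X$ is cut out by the Hessian $\det(H_f)=0$ of degree $4(d-2)$, so it has degree $4d(d-2)$. Pushing the tautological ``tangent-line'' map forward from this curve to $\mathrm{Gr}(1,\PP^3)$ and intersecting with $\sigma_1$ reduces the computation to a Chern-class count on $X$ that should yield $2d(d-2)(3d-4)$. The edge surface $\mathcal{E}(X)$ has defining curve the set of bitangent lines; its degree may be computed either as the class of the node-couple curve $\mathcal{E}^p(X) \subset (\PP^3)^*$, extracted from the Plücker-style invariants of the dual surface $X^\vee$, or directly as the number of bitangent planes to $X$ containing a generic line, obtained by intersecting the pencil of planes through a line with $X$ and counting cuspidal-type degenerations. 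Either route should produce $d(d-2)(d-3)(d^2+2d-4)$.

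The main obstacle will be the multi-point components, especially $\mathcal{T}(X)$. Setting up the correct parameter space for tritangent lines is delicate: one must quotient by $S_3$ and simultaneously remove all lower-stratum contributions (flecnodal, cusp-crossing, and any $4{+}2$, $3{+}3$, or higher coincidences) so that the intersection number produced is genuinely the degree of the irreducible component $\mathcal{T}(X)$ rather than a sum weighted by multiplicity over the strata shown in Figure~\ref{fig:surface-graph}. Once the diagonal corrections and the excess-intersection contributions from the other components meeting $\mathcal{T}(X)$ are accounted for, every remaining computation reduces to a routine push-forward of polynomial Chern characters on $\PP(T_X)$ and its symmetric powers.
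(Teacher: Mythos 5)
Your overall framework---realize each component as a curve in $\mathrm{Gr}(1,\PP^3)$, equate the degree of the ruled surface with the degree of that curve via Lemma~\ref{lem:everyruled}, and compute the intersection with $\sigma_1$ by Chern classes on an incidence variety---is sound, and for $\mathcal F(X)$ and $\mathcal P(X)$ your proposed parameter spaces ($\PP(T_X)$, and the parabolic curve with its asymptotic-direction section) could be made to work. The paper does essentially this, but on the universal line $\Phi=\{(x,L): x\in L\}$ using the relative jet bundles $\mathcal J^m_{\Phi/\Gr}(\mathcal E)$, whose ambient Chow ring $\CH(\Gr)[H]/\langle H^2-\gamma_1 H+\gamma_2\rangle$ reduces everything to the product $\prod_{i=0}^m\bigl((d-2i)H+i\gamma_1\bigr)$ (Propositions~\ref{prop:flecDegree} and~\ref{prop:parDegree}); this is cleaner than the projective-bundle formula on $\PP(T_X)$, and for $\mathcal P(X)$ it still requires care with a non-transverse, excess-codimension intersection that your sketch does not mention. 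The serious gap is in your treatment of $\mathcal E(X)$: both of your proposed routes compute (at best) the degree of the dual curve $\mathcal E^p(X)\subset(\PP^3)^*$, not the degree of the developable surface $\mathcal E(X)\subset\PP^3$, and these are different numbers ($480$ versus $160$ for a quartic). Worse, your second route is vacuous: the pencil of planes through a general line of $\PP^3$ is a general line in $(\PP^3)^*$, which misses the curve $\mathcal E^p(X)$ entirely, so it contains no bitangent planes. Passing from $\deg\mathcal E^p(X)$ (Proposition~\ref{prop:dual_edge_parabolic}, a Pl\"ucker-formula count on the branch curve $C_z(X)$) to $\deg\mathcal E(X)$ requires Piene's formula $\deg\mathcal E(X)=2\bigl(\deg\mathcal E^p(X)+\mathrm{genus}(\mathcal E^p(X))-1\bigr)-\kappa$, hence the genus and cusp count of $\mathcal E^p(X)$; in Proposition~\ref{prop:edgeDegree} these are extracted from the node-couple curve on $X$, its nodes and cusps, the godron count $2d(d-2)(11d-24)$, and Riemann--Hurwitz. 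None of that appears in your plan.

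Two further gaps. First, you do not address the opening assertion of Theorem~\ref{thm:surfaces}, namely that the five listed surfaces exhaust $\mathcal V(X)$; this requires Platonova's classification of the local singularities of the contour together with an argument that the only multi-local degenerations are the $3{+}2$, $2{+}2{+}2$, and coplanar-bitangent cases. Second, for $\mathcal C(X)$ and $\mathcal T(X)$ you correctly identify the symmetric-product and diagonal-correction problem as the main obstacle, but you leave it unresolved; in fairness, the paper also omits these two computations and defers to Colley's multiple point theory or to residual intersections on $\Phi\times_{\Gr}\Phi$ and $\Phi\times_{\Gr}\Phi\times_{\Gr}\Phi$. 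As written, then, your proposal is a workable blueprint for $\mathcal F(X)$ and $\mathcal P(X)$, an incorrect reduction for $\mathcal E(X)$, and only a statement of the difficulty for $\mathcal C(X)$, $\mathcal T(X)$, and the completeness of the decomposition.
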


We first learned these degree formulas from Petitjean's article \cite{Pet}.
We then discovered that all five formulas already appeared in
Salmon's 1882 book \cite{Sal}.  The precise pointers to both sources are given
above. In Section~\ref{sec5} we present new proofs that are self-contained,
except for pointers to the  textbook~\cite{EH}. We here derive the
first assertion in Theorem~\ref{thm:surfaces}, that is, we justify the five
irreducible surfaces. See the end of this section.

\smallskip

The five ruled surfaces in Theorem~\ref{thm:surfaces} are encoded by the
curves shown in the last row in Figure~\ref{fig:surface-graph}. The surfaces
$\mathcal{E}(X)$ and $\mathcal{P}(X)$ are developable, and are the duals of
the singular loci shown on the left in Figure~\ref{fig:surface-graph}. The
remaining three surfaces $\mathcal T(X)$, $\mathcal C(X)$ and $\mathcal F(X)$
arise from the curves in the Grassmannian ${\rm Gr}(1,\PP^3)$ 
seen on the right of that diagram.

The curves of lines and planes from Figure~\ref{fig:surface-graph} 
capture both the local and multi-local~features of the
surface $X$. This is an advantage compared to the traditional approach for
studying the appearance of surfaces based on differential geometry and
singularity theory. In the computer vision literature \cite{BD, PPK, PH,
Pon90, Rie92}, prominent local features of a surface were defined in terms of
the {\em euclidean Gauss map} and the {\em asymptotic spherical image}. These
are maps from the surface to the unit sphere $\mathbb{S}^2$, taking a point on
$X_\RR$ to its normal direction, or to the direction of one of its principal
tangents. In our algebro-geometric setting, the role of $\mathbb{S}^2$ is
played by the dual surface $X^\vee
\subset (\PP^3)^*$ and the principal tangent surface $PT(X)\subset {\rm
Gr}(1,\PP^3)$. These surfaces carry much more information than  the unit
sphere $\mathbb{S}^2$.

Consider now the projection $\pi_z: X \subset \PP^3\dashrightarrow \PP^2$ 
from a  center $z \in \PP^3 \backslash X$. The following result, 
analogous to Proposition~\ref{prop:visualcone}, describes intrinsic
realizations of the contour $C_z(X)$.

\begin{proposition} \label{prop:visualcone_surf}
 The contour $C_z(X)$ of our surface $X$
is projectively equivalent to the curve $\alpha(z) \cap {\rm Hur}(X) $
in the Grassmannian $ {\rm Gr}(1,\PP^3)$. 
The curve $(C_z(X))^\vee$ in $(\PP^2)^*$ that is dual to the contour
is projectively equivalent to
the curve $z^\vee \cap X^\vee$ in the dual projective space $(\PP^3)^*$.
\end{proposition}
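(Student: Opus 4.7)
The plan is to mimic the proof of Proposition~\ref{prop:visualcone} for curves, replacing the Chow threefold by the Hurwitz threefold. The projection $\pi_z: \PP^3 \dashrightarrow \PP^2$ naturally induces two projective isomorphisms: a map $\varphi: \alpha(z) \to \PP^2$ sending each line $L \ni z$ to the point $\pi_z(L)$, and a map $\psi: z^\vee \to (\PP^2)^*$ sending each plane $H \ni z$ to the image line $\pi_z(H)$. Both descend from the same linear quotient of $\PP^3$ by $\langle z\rangle$, so they are indeed projective isomorphisms.

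For the first assertion, a point $p \in \PP^2$ lies on $C_z(X)$ exactly when its viewing line $\varphi^{-1}(p)$ is tangent to $X$ at some smooth point. By definition of the Hurwitz threefold, the lines through $z$ tangent to $X$ are precisely the points of $\alpha(z) \cap {\rm Hur}(X)$. Taking Zariski closures, $\varphi$ carries $\alpha(z) \cap {\rm Hur}(X)$ isomorphically onto $C_z(X)$.

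For the second assertion, I would identify tangent lines of $C_z(X)$ with planes through $z$ tangent to $X$. Let $\Sigma_z \subset X$ denote the critical locus of $\pi_z|_X$, i.e., the smooth points $x \in X$ with $z \in T_x X$; then $C_z(X) = \overline{\pi_z(\Sigma_z)}$. At a general $x \in \Sigma_z$, the plane $T_x X$ contains $z$ and therefore projects to a line $\pi_z(T_x X) \subset \PP^2$ through $\pi_z(x)$. The main step, and the principal obstacle, is to verify that this line equals $T_{\pi_z(x)} C_z(X)$; this reduces to a rank computation on the differential of $\pi_z|_{\Sigma_z}$, using that $T_x \Sigma_z \subset T_x X$ and that generic smoothness holds on the contour. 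Dualizing, $\psi^{-1}$ then identifies the tangent lines of $C_z(X)$ with the planes through $z$ tangent to $X$, namely $z^\vee \cap X^\vee$, and Zariski closure yields the claimed projective equivalence.
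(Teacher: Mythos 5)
The paper gives no proof of this proposition: like Proposition~\ref{prop:visualcone}, it is presented as a consequence of ``basic projective geometry,'' so there is no written argument to compare yours against. Your proposal supplies the natural argument, and it is correct. The first part is immediate, exactly as you say: $\varphi$ identifies $\alpha(z)$ with the image plane, and lines through $z$ tangent to $X$ are by definition the points of $\alpha(z)\cap{\rm Hur}(X)$. For the second part, the ``main step'' you isolate does hold, and the rank computation you defer can be closed in one line: for a general point $x$ of the contour generator $\Sigma_z=\{f=\sum_i z_i\,\partial f/\partial x_i=0\}$, both $z$ and the tangent line $T_x\Sigma_z$ lie in $T_xX$, and the viewing line $\langle z,x\rangle$ is not tangent to $\Sigma_z$ at $x$ (that degeneration produces the cusps of the contour); hence the plane spanned by $z$ and $T_x\Sigma_z$ is all of $T_xX$, and its image under $\pi_z$, which is $T_{\pi_z(x)}C_z(X)$, equals $\pi_z(T_xX)$. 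Conversely, any plane through $z$ tangent to $X$ at a smooth point $x$ forces $z\in T_xX$, so $x\in\Sigma_z$, giving the reverse inclusion; taking Zariski closures yields the claimed equivalence with $z^\vee\cap X^\vee$. So your outline is sound and only needed this last spanning observation to be complete.
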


We now describe the singularities of the  curve $C_z(X)$ that arises by projecting $X$ from $z$.
Given a point $u  \in C_z(X)$, we write  $L_u = \pi_z^{-1}(u) \in {\rm
Gr}(1,\PP^3)$ for its fiber under $\pi_z$.
For a general viewpoint $z$, the plane $\alpha(z)$ intersects the
Hurwitz threefold ${\rm Hur}(X)$ transversely:
\begin{itemize}
	\item If $L_u$ is a general point of ${\rm Hur}(X)$, then $u$ is a smooth
	point of the curve $C_z(X)$. \vspace{-0.1in}
	\item If $L_u$ is a general point of ${\rm Bit}(X) \subset {\rm Hur}(X)$,
	then $u$ is a simple node of $C_z(X)$. \vspace{-0.1in}
	\item If $L_u$ is a general point of ${\rm PT}(X) \subset {\rm Hur}(X)$,
	then $u$ is a simple cusp of $C_z(X)$.
\end{itemize} 

These singularities exist for any viewpoint $z$, since
$\alpha(z)$ always intersects the surfaces ${\rm Bit}(X)$ and
${\rm PT}(X)$ in ${\rm Gr}(1,\PP^3)$. 
It is also interesting to learn (e.g.~from \cite{PH})
that if $L_u$ is a (non-principal) tangent line at a
parabolic point, then $u$ is a flex point of $C_z(X)$.
We are interested in higher order singularities seen in the image curve 
for special viewpoints:
\begin{itemize}
	\item[($\mathcal{T}$)] If $L_u$ is a tritangent line, then $u$ is a triple point.
	 This is a {\em triple point} event.
	\item[($\mathcal{C}$)] If $L_u$ is a principal bitangent, then $C_z(X)$ 
	has a smooth branch and a cuspidal branch that meet at $u$.
	This is a {\em cusp crossing} event.
	\item[($\mathcal{F}$)] If $L_u$ is a flecnodal line, then $u$ is the limit of two cusps
	and a node, i.e.,~an infinitesimal change of the viewpoint produces
	two cusps and a node. This is a {\em swallowtail} event.
	\item[($\mathcal{E}$)] If $L_u$ is a bitangent line on a bitangent plane, then $u$ is a tacnode.
	It is obtained as the limit of two smooth branches coming together at $u$. This is a {\em tangent
	crossing} event.
    \item[($\mathcal{P}$)] If $L_u$ is the principal tangent at a parabolic
    point $p$, then, over the real numbers, two behaviors are possible: either
    $u$ is an isolated node, which corresponds to a {\em lip} event, or $u$ is a
    tacnode, obtained as the limit of two cusps, which is a {\em beak-to-beak}
    event.
\end{itemize}

The triple point, cusp crossing, and tangent crossing events are multi-local.
The six visual events are shown in Figure~\ref{fig:events}. Detailed
renderings of these pictures are ubiquitous in the relevant computer vision
literature. For instance, see \cite[Figures 5 and 6]{PPK}, and Figures 13.20
through 13.25 in the textbook~\cite{ForPon}. For a fixed general surface $X$
in $\PP^3$, the locus of exceptional viewpoints consists of the five ruled
surfaces $\mathcal{F}(X),
\mathcal{C}(X),
\mathcal{T}(X),
\mathcal{E}(X) $ and 
$\mathcal{P}(X)$.
These were  defined both in Figure \ref{fig:surface-graph} and at the beginning of Section \ref{sec4}.

\begin{figure}[htbp]
	\centering
	\includegraphics[width=0.45\textwidth]{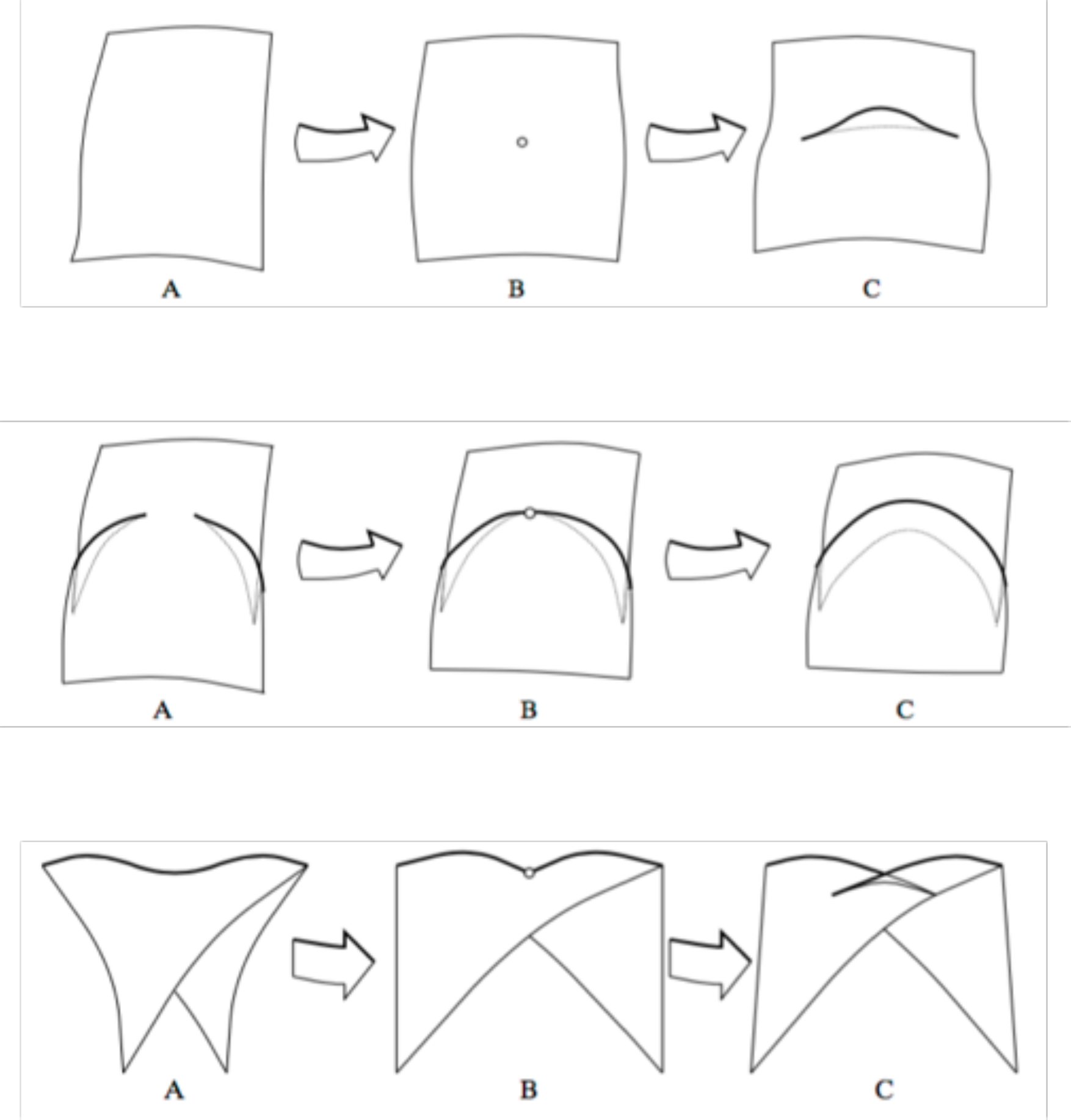} \quad
	\includegraphics[width=0.5\textwidth]{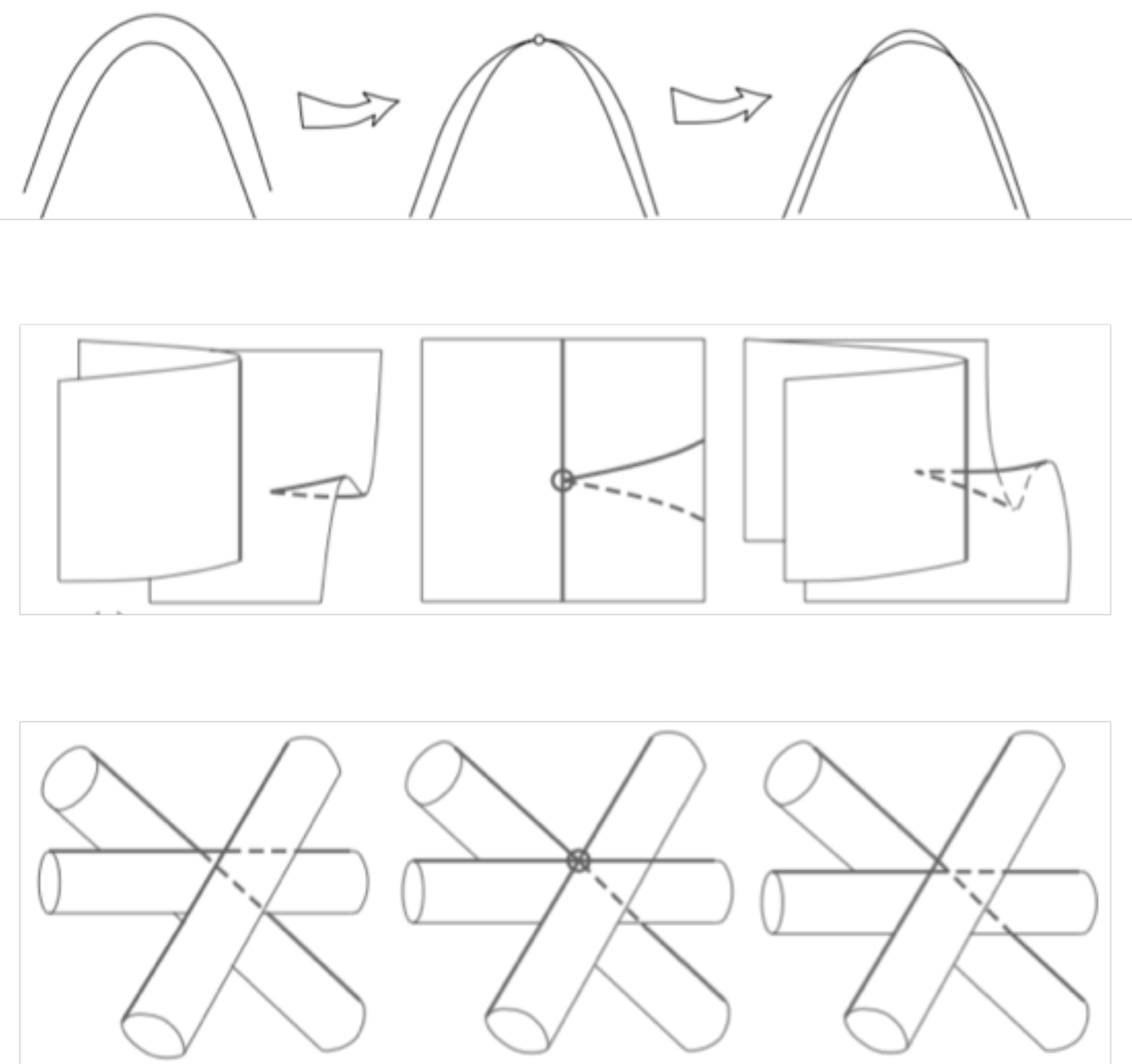}
	\caption{\small The catalogue of visual events
	for the projections of a smooth surface from a viewpoint that moves.
The local events ({\em left},
from top to bottom) are lip, beak-to-beak, swallowtail. The multi-local events 
({\em right}, from top to bottom) are tangent crossing, cusp crossing, triple
point. Reprinted from~\cite{Pae} with permission of Springer.}
	\label{fig:events}
\end{figure}

We now briefly explain how to distinguish the two possible local behaviors (lip
versus beak-to-beak) of the contour when the viewpoint $z$ belongs to the
parabolic surface $\mathcal P(X)$. As argued in Section~\ref{sec2}, the
parabolic surface $\mathcal P(X)$ is a developable surface, since it is dual
to the curve $\mathcal P^p(X)$ in $(\PP^3)^*$. In particular, all principal
tangents at parabolic points are the tangents of the edge of regression curve,
denoted by $E(\mathcal P(X))$. This allows us to associate each parabolic
point $x$ with another point $e_x$, where the principal tangent at $x$ is
tangent to $E(\mathcal P(X))$. In real projective $3$-space, the complement of
$\{x,e_x\}$ in that line has two connected components.  The distinction
between lip and beak-to-beak is made by which of these two components the
viewpoint $z$ belongs to. This was shown in \cite[Theorem 4.10]{Plat}.


\smallskip

We next offer an informal discussion that should provide an
intuitive understanding of our five event surfaces. The
following are some real life situations where these events can actually be
observed. We encourage our readers to look at the world from multiple viewpoints, 
and to then spot the six pictures of Figure~\ref{fig:events}.
Do look carefully at the objects that surround~you.

We first note that cuspidal and nodal singularities of image contours are stable
features, which are visible in most surfaces. Nodes occur whenever
occlusions create discontinuities in the contour. Cusps only appear for non-convex objects.
For instance, they can be observed on the folds of a piece of
cloth. From an exceptional viewpoint, it is possible that several of these
singularities occur along the same visual ray. This gives rise to a multi-local visual event 
(cusp crossing, tangent crossing, or triple points). 
Try it with a napkin or  towel.

The three local events on the left in Figure~\ref{fig:events} are more complicated.
Videos of these events and their corresponding ruled surfaces are available at
\begin{center}
\url{www.mis.mpg.de/nlalg/videos/changingviews}
\end{center}

It takes some practice to discover the
local events in the real world. Here are some concrete examples that we found helpful:

\begin{itemize}
     \item {\em Lip event:} If we observe a small hill from a high aerial
     viewpoint (say, from a hot air balloon), then all points on the ground
     are visible. The hill does not generate an image contour.
     However, as we descend closer to the ground, the profile of the hill 
     suddenly becomes visible in the contour. This qualitative change of appearance is 
          a lip event.

     \item {\em Beak-to-beak.} Observe a glass bottle from the bottom, with
      your eye close to the base. You see a part of the contour generated by the
      convex region where the sectional diameter of the bottle decreases. Now,
      tilt the bottle slowly towards its upright position. At some point, you
            see a complete path  from the base to the top of the bottle.
      Previously your view had been
      blocked. This is a beak-to-beak event. Contrary to the lip
      event, the contour does not disappear at the transition point, but it
      breaks into two pieces.
     \item {\em Swallowtail.} The traditional drawing of a (transparent) torus
     presents two swallowtails. We see both cuspidal and nodal singularities in the contour
      \cite[Fig.~2]{Rie92}. As we rotate the
     torus, a  visual event occurs, and these singularities
     disappear. Try it with a bagel.
 \end{itemize} 

 After having presented an intuitive descriptions of visual events, we now sketch a  formal argument which shows that the list of visual events in Theorem~\ref{thm:surfaces} is indeed exhaustive.
 
 \begin{proof}[Proof sketch for the first part of Theorem~\ref{thm:surfaces}.]
   Let $u \in C_z(X)$ and $L_u := \pi_z^{-1}(u)$.
   Platonova \cite[Main Theorem]{Plat} characterizes all possible \emph{local} singularities in the contour curve.
   In particular, she shows in the case $L_u \in \mathrm{PT}(X)$ that $u$ is not a simple cusp on its branch of the contour curve if and only if $L_u$ is either a flecnodal line or the unique principal tangent at a parabolic point.
   Hence, we only have to characterize the possible multi-local events.
   We argued above that tritangent lines and principal bitangent lines do not yield simple nodes in the contour.
   The final observation is that a line $L_u$ with contact order exactly two at exactly two 
   distinct points of the surface $X$ projects to a simple node $u$ if and only if $L_u$ is 
   not contained in a bitangent plane to $X$ which is tangent at the same two points as $L_u$.
 \end{proof}

\section{Intersection Theory}
\label{sec5}

In this section we derive the degrees of $\mathcal{E}^p(X)$,
$\mathcal{P}^p(X)$, $\mathcal{P}(X)$ and $\mathcal{F}(X)$, and we sketch the
relevant ideas for $\mathcal{E}(X)$, $\mathcal{C}(X)$ and $\mathcal{T}(X)$. We
found Petitjean's proofs in~\cite{Pet} to be lengthy and hard. They require a
full understanding of Colley's multiple point theory~\cite{Col1, Col2}, and
several of the steps are left out. By contrast, the derivations in Salmon's
book \cite{Sal} are inspiring but they lack the rigor of 20th century intersection
theory. 

The exposition that follows refers to the  textbook  by Eisenbud and Harris~\cite{EH}.
We believe that students of that book will find this section to be useful as supplementary reading.
  
We have discussed that the surfaces $\mathcal{E}(X)$ and $\mathcal{P}(X)$ 
are represented by their dual curves in $(\PP^3)^*$. These are the irreducible components in
 the singular locus of the dual surface $X^\vee$.

\begin{proposition}\label{prop:dual_edge_parabolic} The degrees of the curves dual to the edge and parabolic surface of a general surface $X$ of degree $d \geq 4$ or $d \geq 3$, respectively, are
\begin{equation*}
\! {\rm deg}\bigl(\mathcal{E}^p(X)\bigr) =   \frac12 d(d-1)(d-2)(d^3-d^2+d-12)
\quad \hbox{and} \quad
{\rm deg}\bigl(\mathcal{P}^p(X) \bigr) =  4d(d-1)(d-2).
\end{equation*}
\end{proposition}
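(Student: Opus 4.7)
The plan is to compute both degrees by combining the Gauss map $\gamma: X \to X^\vee$ with the classical Plücker class formula applied to a general plane section of $X^\vee$. Throughout I would use the standard fact that $\deg X^\vee = d(d-1)^2$, that $\gamma$ is birational onto its image, and (as asserted in the paper) that the singular locus of $X^\vee$ is the disjoint union of the nodal curve $\mathcal{E}^p(X)$ and the cuspidal curve $\mathcal{P}^p(X)$.

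For $\mathcal{P}^p(X)$, I would first identify the parabolic curve $P \subset X$ as the scheme-theoretic intersection $X \cap \{\det(H_f)=0\}$, where $\det(H_f)$ has degree $4(d-2)$; Bézout in $\PP^3$ gives $\deg P = 4d(d-2)$. The Gauss map restricted to $P$ surjects onto $\mathcal{P}^p(X)$, and I would argue this restriction is birational for generic $X$. The degree of $\mathcal{P}^p(X)$ equals the number of its points meeting a general hyperplane $H \subset (\PP^3)^*$, and $\gamma^{-1}(H)$ on $X$ is the polar curve $\Gamma_p$ (with respect to the point $p$ dual to $H$), which is cut out on $X$ by a polar hypersurface of degree $d-1$. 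Thus the intersection $P \cdot \Gamma_p$ on $X$ equals the triple intersection $X \cap \{\det H_f=0\} \cap \{\text{polar}\}$ in $\PP^3$, namely $d \cdot 4(d-2)\cdot (d-1)$. Birationality of $\gamma|_P$ then yields $\deg \mathcal{P}^p(X) = 4d(d-1)(d-2)$.

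For $\mathcal{E}^p(X)$ I would exploit Plücker on a general plane $H \subset (\PP^3)^*$. The curve $D := H \cap X^\vee$ has degree $d(d-1)^2$, and its only singularities are $\nu := \deg \mathcal{E}^p(X)$ simple nodes (at the transverse intersections with $\mathcal{E}^p(X)$) and $\kappa := \deg \mathcal{P}^p(X) = 4d(d-1)(d-2)$ simple cusps (at the transverse intersections with $\mathcal{P}^p(X)$). By Proposition~\ref{prop:visualcone_surf}, the dual $D^\vee$ is projectively equivalent to the contour $C_p(X) \subset \PP^2$ of the point $p$ dual to $H$, and this contour has degree $d(d-1)$. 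The Plücker class formula therefore reads
\begin{equation*}
d(d-1) \;=\; d(d-1)^2 \bigl( d(d-1)^2 - 1 \bigr) \;-\; 2\nu \;-\; 3 \cdot 4d(d-1)(d-2),
\end{equation*}
and after isolating $\nu$ and factoring one verifies $\nu = \tfrac12 d(d-1)(d-2)(d^3 - d^2 + d - 12)$.

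The expected main obstacle is the bookkeeping of \emph{genericity}: justifying that a general hyperplane section of $X^\vee$ meets $\mathcal{E}^p(X)$ and $\mathcal{P}^p(X)$ transversely, that these two curves are disjoint (so no worse singularities appear on $D$), that the nodes and cusps contributed are all simple, and that $\gamma|_P$ is birational rather than multiple-to-one. All of these reduce to the structural claim about the singular locus of $X^\vee$ for generic $X$ of degree $d \geq 4$ (respectively $d \geq 3$ for the parabolic statement alone), which is a deformation-theoretic input I would borrow from the discussion around Figure~\ref{fig:surface-graph} and \cite{KNT}. The algebraic identity needed to match $d(d-1)^3 - 13d + 24$ with $(d-2)(d^3 - d^2 + d - 12)$ is a direct expansion.
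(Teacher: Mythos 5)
Your argument is correct and arrives at the right formulas, but it takes a partially different route from the paper. The paper treats both $\nu_2 = \deg\mathcal{E}^p(X)$ and $\kappa_2 = \deg\mathcal{P}^p(X)$ as unknowns and extracts them from a system of two equations: Pl\"ucker's class formula applied to the plane section $z^\vee\cap X^\vee \cong (C_z(X))^\vee$ (which is exactly your equation for $\nu$), together with the equality of geometric genera of $C_z(X)$ and its dual via the degree--genus formula. That second equation forces the paper to first compute the node and cusp counts $\nu_1 = \frac12 d(d-1)(d-2)(d-3)$ and $\kappa_1 = d(d-1)(d-2)$ of the branch curve itself. You instead compute $\deg\mathcal{P}^p(X)$ directly as the B\'ezout number $d\cdot 4(d-2)\cdot(d-1)$ of the triple intersection of $X$, its Hessian surface, and a polar surface of degree $d-1$, using birationality of the Gauss map restricted to the parabolic curve; with $\kappa$ known, the single Pl\"ucker equation then determines $\nu$, and the genus computation and the counts $\nu_1,\kappa_1$ are not needed at all. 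This is a genuine simplification, at the modest cost of having to justify that $\gamma|_P$ is birational (which follows from birationality of the Gauss map of a general surface) and that the triple intersection is transverse. One small correction to your list of genericity obstacles: $\mathcal{E}^p(X)$ and $\mathcal{P}^p(X)$ are \emph{not} disjoint --- they meet at the tangent planes of the godrons, where the node-couple and parabolic curves on $X$ are tangent to one another --- but this is harmless, since a general hyperplane in $(\PP^3)^*$ avoids these finitely many points, so the section $D$ still has only the simple nodes and cusps your Pl\"ucker count requires.
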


\begin{proof} 
We  count the bitangent and parabolic planes 
that contain a general point $z \in \PP^3$. Consider the branch curve $C_z(X) \subset \PP^2$.
A bitangent plane to $X$ containing $z$ maps onto a bitangent line of $C_z(X)$.
 A tangent plane to $X$ at a parabolic point that contains $z$ maps onto a flex line of $C_z(X)$.
These correspond, respectively, to the nodes and the cusps of
the dual curve $C_z(X)^\vee $ in $(\PP^2)^\ast$.
We use Pl\"ucker's formula ~\cite[Thm.~1.2.7]{Dol}: for an irreducible plane curve of degree $D$ with
 $\nu$ nodes, $\kappa$ cusps and no other singularities, 
 its dual has degree $D(D-1)-2\nu-3\kappa$.  The genus is
 $\frac12 (D-1)(D-2)-\nu-\kappa$, by the degree-genus formula.

The ramification curve of the projection $\pi_z$ has degree $d(d-1)$ since it is defined by
the equation $f$ of $X$ and its derivative $g(x) := \sum_{i=1}^4 z_i \frac{\partial f}{\partial x_i}$.
Hence $C_z(X)$ has degree $d(d-1)$ as well. The degree
of the dual curve $(C_z(X))^\vee \subset (\PP^2)^\ast$ equals the degree of the dual surface 
$X^\vee$ (see~\cite[proof of Thm.~4.3]{KNT} for details). This common degree is $d(d-1)^2$.

We denote by $\nu_1$ the number of nodes in $C_z(X)$, which is the number of bitangent lines to $X$ passing through the general point $z$.
The number $\kappa_1$ of cusps in $C_z(X)$ is the number of principal tangents to $X$ passing through $z$.
The set of points $x \in X$ that have such a principal tangent through $z$ is the zero
locus of the polynomials $f$, $g$ and $h(x) := \sum_{i=1}^4 z_i \frac{\partial g}{\partial x_i} = z H_f(x) z^T$.
Hence, $\kappa_1 = d(d-1)(d-2)$. Pl\"ucker's formula for $C_z(X)$ tells us that
\begin{align*}
  d(d-1)^2 \,\,=\,\, \deg( (C_z(X))^\vee )\, &
  = \,\,\deg(C_z(X) ) \left( \deg(C_z(X) )-1 \right) - 2 \nu_1 - 3 \kappa_1\\ 
  &= \,\,d(d-1) \left( d(d-1)-1 \right) - 2 \nu_1 - 3 \kappa_1.
\end{align*}
Thus the number of nodes of the branch curve $C_z(X)$ equals $\nu_1 = \frac12 d(d-1)(d-2)(d-3)$.
We can also apply Pl\"ucker's formula to the dual plane curve $C_z(X)^\vee$ to derive
\begin{align}
  \label{eq:pluecker}
  \begin{split}
  d(d-1) \,\, = \,\,\deg( C_z(X) ) \,&= \,\,\deg( (C_z(X) )^\vee ) \left( \deg( (C_z(X) )^\vee )-1 \right) - 2 \nu_2 - 3 \kappa_2\\
  &= \,\,d(d-1)^2 \left( d(d-1)^2-1 \right) - 2 \nu_2 - 3 \kappa_2,
\end{split}
\end{align}
where $\nu_2 = \deg (\mathcal{E}^p(X) )$  and $\kappa_2 = \deg (\mathcal{P}^p(X) )$,
as argued above.
The dual curves $C_z(X)$ and $(C_z(X) )^\vee$ have the same geometric genus.
By the degree-genus formula, it is
\begin{equation}
  \label{eq:genus}
  \frac12 ( d(d{-}1)-1 ) ( d(d{-}1) -2 ) - \nu_1 - \kappa_1 \,\,\,= \,\,\, \frac12 ( d(d{-}1)^2-1 )( d(d{-}1)^2-2 ) - \nu_2 - \kappa_2.
\end{equation}
Solving the equations~\eqref{eq:pluecker} and~\eqref{eq:genus} for
$\nu_2$ and $\kappa_2$ leads to the formulas in Proposition~\ref{prop:dual_edge_parabolic}.
\end{proof}

This proof was entirely classical. By contrast, we derive the degrees of
the visual event surfaces using modern intersection theory \cite{EH}.
At the heart of intersection theory lies the {\em Chow ring}.
For any irreducible projective variety $Y$ and any $c \in \lbrace 0, \ldots, \dim Y \rbrace$, we denote by $\CH^c(Y)$ the free abelian group generated by the set of irreducible closed subvarieties of $Y$ with codimension $c$, modulo \emph{rational equivalence}.
We call elements in $\CH^c(Y)$ \emph{codimension-$c$ cycles}.
The rational equivalence class of a subvariety $Z \subset Y$ is denoted by $[Z]$.
We denote the  class of a point in $Y$ by $\ast \in \CH^{\dim Y}(Y)$, whenever this is well-defined.

\begin{thmDef} {\rm \cite[Thm-Def 1.5]{EH} } 
If $\,Y$ is a smooth projective variety, then there is a unique product structure on
$\,  \CH(Y) = \bigoplus_{c=0}^{\dim Y} \CH^c(Y)\,$ which satisfies the condition:
\begin{center}
If two subvarieties $Z_1,Z_2 \subset Y$ are generically transverse, then $[Z_1][Z_2] = [Z_1 \cap Z_2]$.
\end{center}
By generically transverse we mean that every irreducible component of $Z_1 \cap Z_2$ contains a point where $Z_1$ and $Z_2$ are transverse.
This makes $\CH(Y)$ into an associative commutative graded ring,  called the \emph{Chow ring} 
of~$Y$.
\end{thmDef}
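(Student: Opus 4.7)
The plan is to attack this foundational theorem in two steps: first prove uniqueness by reducing to generically transverse cycles, then construct the product and verify the ring axioms. The key tool underlying both halves is the smoothness of $Y$, which forces the diagonal $\Delta : Y \hookrightarrow Y \times Y$ to be a regular embedding of codimension $\dim Y$.

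For uniqueness, I would first observe that any product $\ast$ on $\CH(Y)$ satisfying the transversality condition is determined on pairs of classes represented by generically transverse subvarieties. Hence uniqueness reduces to a moving lemma: for any two cycles $\alpha_1, \alpha_2 \in \CH(Y)$ there exist representatives $Z_1 \in \alpha_1$, $Z_2 \in \alpha_2$ that are generically transverse. I would establish this by fixing a projective embedding $Y \subset \PP^N$, then moving each $Z_i$ within its rational equivalence class via a generic one-parameter family of linear projections (or equivalently generic translates under a transitive group action, in the homogeneous case) and invoking a Bertini-type argument to ensure generic transversality. This is the classical Chow moving lemma; extending it from the homogeneous setting to arbitrary smooth projective $Y$ is technical and is in fact the main obstacle of the whole theorem.

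For existence, I would follow the modern approach via the diagonal. Since $Y$ is smooth, $\Delta$ is a regular closed embedding with normal bundle the tangent bundle $T_Y$, hence admits a refined Gysin pullback $\Delta^! : \CH(Y \times Y) \to \CH(Y)$ constructed via deformation to the normal cone. Using the flat exterior product $\times : \CH(Y) \otimes \CH(Y) \to \CH(Y \times Y)$, whose compatibility with rational equivalence is a standard consequence of flat pullback, I would define
\[
[Z_1] \cdot [Z_2] \,:=\, \Delta^!\bigl([Z_1] \times [Z_2]\bigr).
\]
When $Z_1$ and $Z_2$ meet generically transversely, the intersection $Z_1 \cap Z_2 \subset Y$ equals $\Delta^{-1}(Z_1 \times Z_2)$ with the expected codimension and trivial excess normal bundle, and the refined Gysin map collapses to the set-theoretic intersection with its reduced structure. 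This verifies the transversality axiom; well-definedness on classes then follows from the rational-equivalence invariance of $\Delta^!$.

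Finally, associativity, commutativity, and the grading $\CH^a \cdot \CH^b \subset \CH^{a+b}$ are formal consequences of the functoriality of refined Gysin pullbacks, applied to the iterated diagonal $Y \hookrightarrow Y \times Y \times Y$, together with codimension bookkeeping in $Y \times Y$. I expect the principal obstacle to be making the deformation-to-the-normal-cone construction watertight, in particular proving rational-equivalence invariance and independence of choices of $\Delta^!$. This is exactly the content of the foundational chapters of Fulton's intersection theory, while the alternative route through the moving lemma shifts the difficulty to Chow's theorem and its Bertini-type arguments but avoids the technology of refined pullbacks.
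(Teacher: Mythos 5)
The paper does not prove this statement at all: it is imported verbatim as Theorem--Definition 1.5 of Eisenbud and Harris \cite{EH}, with the proof deferred entirely to that textbook (and ultimately to Fulton's \emph{Intersection Theory}). So there is no internal argument to compare yours against. Taken on its own terms, your outline is a faithful and correctly organized sketch of how the result is actually established in the literature. The uniqueness half via the moving lemma is exactly the route Eisenbud and Harris take: once one knows that every pair of classes admits generically transverse representatives, the defining condition forces the product, and you correctly flag that the full moving lemma for an arbitrary smooth projective $Y$ (as opposed to a homogeneous space, where generic translates suffice) is the genuinely hard step. The existence half via the exterior product followed by the refined Gysin pullback $\Delta^!$ along the diagonal is Fulton's construction, which sidesteps the moving lemma entirely; your observation that smoothness of $Y$ is used precisely to make $\Delta$ a regular embedding with normal bundle $T_Y$ is the right structural point, and your verification that $\Delta^!$ reduces to the reduced set-theoretic intersection in the generically transverse case (expected codimension, no excess) is how the defining condition is checked. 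Two small caveats: first, your phrase ``trivial excess normal bundle'' should read ``excess bundle of rank zero''; second, be aware that your sketch, like the paper's citation, is not self-contained --- the rational-equivalence invariance and commutation properties of $\Delta^!$ that you invoke for well-definedness and associativity carry essentially all of the technical weight, as you yourself acknowledge. Within the conventions of this paper, which treats the Chow ring as black-box background, your proposal is an accurate roadmap rather than a gap.
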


\begin{example} \rm \cite[Theorem 2.1]{EH} 
\label{ex:chowRingProjSpace} 
The Chow ring of projective $3$-space is $\CH(\PP^3)= \ZZ[h] /  \langle h^4 \rangle$, where $h \in \CH^1(\PP^3)$ is the (rational equivalence) class of a plane in $\PP^3$.
\hfill $\diamondsuit$
\end{example}

\begin{example} \rm \cite[Theorem 3.10]{EH} \
The Chow ring of the Grassmannian ${\rm Gr}(1,\PP^3)$ is a graded 
$\ZZ$-algebra with Hilbert function $(1,1,2,1,1)$. The graded components are
the free $\ZZ$-modules
\begin{align*}
\CH^0(\Gr) = \ZZ \cdot [\Gr], \;\; 
& \CH^1(\Gr) &= \,\,\ZZ \gamma_1, \,\,\, \;\; 
& \CH^2(\Gr) &= \,\ZZ \gamma_1^2 \oplus \ZZ \gamma_2, \\
& \CH^3(\Gr) &= \ZZ \gamma_1\gamma_2, \;\;
& \CH^4(\Gr) &= \,\, \ZZ \ast ,\,\,\,
\end{align*}
where $\gamma_1$ represents the threefold of all lines that meet a given line,
and $\gamma_2$ represents the surface of all lines that lie in a given plane.
The multiplicative structure is given by
\begin{align*}
  \gamma_1^3 = 2 \gamma_1 \gamma_2\,, \quad
  \gamma_2^2 = \ast\,, \quad
  \gamma_1^2\gamma_2 = \ast\,, \quad
  \gamma_1^4 = 2 \ast.
\end{align*}
Further, the class  $\gamma_1^2-\gamma_2$ represents
the surface of all lines passing through a given point.
\hfill $\diamondsuit$
\end{example}

\begin{example} \rm \cite[Proposition 9.10]{EH} The {\em universal line}
$\,\Phi= \lbrace (x,L) \in \PP^3 \times \Gr \mid x \in L \rbrace$ 
 is a $5$-dimensional
smooth projective variety. The Chow ring of this variety equals
$\CH(\Phi) = \CH(\Gr)[H] /  \langle H^2 - \gamma_1 H + \gamma_2 \rangle$.
Here, the class $H \in \CH^1(\Phi)$ represents the preimage of a plane in $\PP^3$ under the projection $\Phi \to \PP^3$ onto the first factor.
\hfill $\diamondsuit$
\end{example}

\begin{proposition}
  \label{prop:flecDegree}
  For a general surface $X$ of degree $d \geq 4$, the degrees of the flecnodal surface $\mathcal F(X)$ and the flecnodal curve $F$ on $X$ are
\begin{equation*}
  \! {\rm deg}\bigl(\mathcal{F}(X)\bigr) =   2d(d-3)(3d-2)
\quad \hbox{and} \quad
{\rm deg}\bigl(F \bigr) =  d(11d-24).
\end{equation*}
\end{proposition}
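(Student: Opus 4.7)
The plan is to realize both $F\subset X$ and the flecnodal line curve $\mathcal F^\ell(X)\subset\Gr$ as pushforwards of a single $1$-dimensional incidence cycle $\Psi_4$ in the universal line $\Phi$. Setting
\[
\Psi_4 \,=\, \bigl\{\,(x,L)\in\Phi \,:\, f|_L \text{ vanishes to order}\geq 4 \text{ at }x\,\bigr\},
\]
where $f$ is the defining equation of $X$ of degree $d$, this locus is the scheme-theoretic zero set of the $3$-jet of $\pi_1^* f$, viewed as a section of the relative principal parts bundle $P := P^3_{\pi_2}(\pi_1^*\mathcal O_{\PP^3}(d))$ on $\Phi$. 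This bundle has rank $4$, with fibre at $(x,L)$ the space of $3$-jets at $x$ of sections of $\mathcal O_L(d)$. For general $X$, the zero scheme $\Psi_4$ has the expected dimension $1$, and both $\pi_1|_{\Psi_4}$ and $\pi_2|_{\Psi_4}$ are birational onto $F$ and $\mathcal F^\ell(X)$, respectively.

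The central calculation is
\[
[\Psi_4] \,=\, c_4(P) \,=\, \prod_{j=0}^{3}(dH + j\omega)\,\in\,\CH^4(\Phi),\qquad \omega := c_1(\Omega_{\pi_2}),
\]
which follows from the standard filtration of $P$ whose successive line-bundle quotients are $\Omega_{\pi_2}^{\otimes j}\otimes\pi_1^*\mathcal O_{\PP^3}(d)$ for $j=0,\ldots,3$. Identifying $\omega$ is the main technical step. I would compute $c_1(T_\Phi)$ through both projections: the $\PP^1$-bundle $\pi_2$ gives $c_1(T_\Phi) = c_1(T_{\pi_2}) + 4\gamma_1$ with $c_1(T_{\pi_2})|_L = 2$ forcing $c_1(T_{\pi_2})=2H+a\gamma_1$; the $\PP^2$-bundle $\pi_1$ gives $c_1(T_\Phi)=c_1(T_{\pi_1})+4H$ with $c_1(T_{\pi_1})|_{\PP^2}=3\gamma_1|_{\PP^2}$ forcing $c_1(T_{\pi_1})=bH+3\gamma_1$. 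Matching the two expressions for $c_1(T_\Phi)$ yields $a=-1$ and $b=-2$, so $\omega=-2H+\gamma_1$, and
\[
[\Psi_4] \,=\, dH\cdot\bigl((d-2)H+\gamma_1\bigr)\bigl((d-4)H+2\gamma_1\bigr)\bigl((d-6)H+3\gamma_1\bigr).
\]

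The final step is a direct pushforward. Using $H^2 = \gamma_1 H - \gamma_2$ and $\gamma_1^3 = 2\gamma_1 \gamma_2$, I would expand $[\Psi_4]$ in the $\ZZ$-basis $\{\gamma_1\gamma_2 H,\ \gamma_1^2\gamma_2,\ \gamma_2^2\}$ of $\CH^4(\Phi)$ with coefficients $A$, $B$, $C$. Applying $(\pi_2)_*$ (with $(\pi_2)_* H = 1$ and $(\pi_2)_* 1 = 0$) extracts only the first coefficient: $(\pi_2)_*[\Psi_4] = A\gamma_1\gamma_2 = [\mathcal F^\ell(X)]$. By Lemma~\ref{lem:everyruled}, $\deg \mathcal F(X)$ equals the $\PP^5$-degree of $\mathcal F^\ell(X)$, namely $A$, and routine algebra simplifies this to $2d(d-3)(3d-2)$. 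For $\pi_1$, each of the three basis classes pushes forward to the line class $h^2\in\CH^2(\PP^3)$, so $\deg F = A+B+C$, which simplifies to $d(11d-24)$. Beyond the cotangent-class bookkeeping, the main obstacle is a transversality argument ensuring $[\Psi_4] = c_4(P)$ with multiplicity one: for a general surface $X$, one must check that the section $j^3(\pi_1^*f)$ vanishes generically to order one along $\Psi_4$ and that both projections are birational, the latter holding because at a general flecnodal point only one of the two principal tangents is the flecnodal line.
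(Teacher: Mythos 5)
Your proposal is correct and follows essentially the same route as the paper: the same incidence variety $X^4_\Phi\subset\Phi$, the same rank-$4$ relative jet bundle whose top Chern class is $\prod_{j=0}^{3}\bigl((d-2j)H+j\gamma_1\bigr)$, and the same extraction of the two degrees by pairing with $\gamma_1$ and $H$ (your pushforward formulation is equivalent), with your derivation of $c_1(\Omega_{\pi_2})=\gamma_1-2H$ from the two fibration structures replacing the paper's citation of \cite[p.~395]{EH}. One cosmetic slip: $\gamma_1^2\gamma_2=\gamma_2^2=\ast$ in $\CH(\Gr)$, so your three classes do not form a basis of the rank-two group $\CH^4(\Phi)$, but since both pushforwards depend only on $A$ and on the sum $B+C$, the final computation is unaffected.
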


\begin{proof}
  Let $X^4_\Phi \subset \Phi$ 
  be the incidence variety of pairs $(x,L)$ such that the line $L$ has contact of order at least $4$ at the point $x \in X$.
The degree of the flecnodal surface
 $\mathcal{F}(X)$ is the number of intersections of $\mathcal{F}(X)$ with a general line.
This is the number of pairs $(x,L) \in X^4_\Phi$ such that $L$ meets a general line. 
In particular, we have $\,[X^4_\Phi] \cdot \gamma_1 = \deg(\mathcal{F}(X)) \ast \,$ in $ \CH(\Phi)$.

We shall compute $[X^4_\Phi] \in \CH^4(\Phi)$ via Chern classes.
The \emph{top Chern class} $c_r(\mathcal{E}) \in \CH^r(Y)$ of a vector bundle $
\mathcal{E}$ of rank $r$ on a smooth 
variety $Y$ with a global section $\sigma: Y \to \mathcal{E}$ is the class of the vanishing locus of $\sigma$. This definition is independent of the chosen global section. 
Fix any vector bundle $\mathcal{E}$ on $\Phi$ and any integer $m \in \mathbb{N}$.
By \cite[Theorem~11.2]{EH}, 
there is a new vector bundle $\mathcal{J}_{\Phi / \Gr}^m(\mathcal{E})$ on $\Phi$ whose
 fiber at $(x,L)$  is the space of all germs of sections of $\mathcal{E}|_{\lbrace (y,L) \in \Phi \rbrace}$ 
  modulo those that vanish to order $ \geq m+1$ at $(x,L)$.
  This is called the \emph{bundle of relative principal parts} or the \emph{relative jet bundle}. 
We shall compute its top Chern class.

Let now $\mathcal{E}$ be the pullback to $\Phi$ of the line bundle $\mathcal{O}_{\PP^3}(d)$.
A global section of $\mathcal{E}$ is given by the homogeneous polynomial 
in $x_1,x_2,x_3,x_4$ of degree $d$ that defines $X$ in  $\PP^3$.
Restricting this polynomial to the line $L$ gives a global 
section of $\mathcal{E}|_{\lbrace (y,L) \in \Phi \rbrace}$. 
That global section vanishes at $(x,L)$ if and only if $L$ has contact of order at least $ m+1$ at $x \in X$.
Hence, the top Chern class of $\mathcal{J}_{\Phi / \Gr}^m(\mathcal{E})$ 
 is the class of the subvariety of all pairs $(x,L) $
 in $\Phi$ such that $L$ has contact of order at least $ m+1$ at $x \in X$.
 In particular, $c_4(\mathcal{J}_{\Phi / \Gr}^3( \mathcal{E})) = [X^4_\Phi]$.

 In addition, we see from Theorem~11.2 in~\cite{EH} that $\mathcal{J}_{\Phi / \Gr}^m(\mathcal{E})$ 
agrees locally with
\begin{equation}
\label{eq:bundlesum}
\mathcal{E} \oplus \left( \mathcal{E} \otimes  \Omega_{\Phi / \Gr} \right) \,\oplus \,
\left( \mathcal{E} \otimes   \Sym^2 \Omega_{\Phi / \Gr} \right) \,\oplus \, \cdots \,\oplus \,
\left( \mathcal{E} \otimes  \Sym^m \Omega_{\Phi / \Gr} \right),
   \end{equation}
where $\Omega_{\Phi / \Gr}$ is the \emph{relative cotangent bundle}, which has rank 1 in our case.
We compute the top Chern class of $\mathcal{J}_{\Phi / \Gr}^m(\mathcal{E})$ from
its representation (\ref{eq:bundlesum}).
From~\cite[p.~395]{EH} 
we have $c_1 (\Sym^m \Omega_{\Phi / \Gr}) = m(\gamma_1 - 2H) $ in $ \CH^1(\Phi)$.
Since the equation $f$  of $X$ gives a global section of $\mathcal{E}$, we further have
  $c_1(\mathcal{E}) = d H$ in $ \CH^1(\Phi)$. The top Chern 
class of the tensor product of two line bundles is the sum of their top Chern classes 
\cite[Prop.~5.17]{EH}. Hence
$$\,c_1(\mathcal{E} \otimes \Sym^i \Omega_{\Phi / \Gr}) \,\,= \,\,
(d-2i)H+i\gamma_1 \qquad \hbox{in} \quad 
\CH^1(\Phi) \quad \hbox{for} \quad i = 0,1,2,\ldots,m.  $$
Finally, \emph{Whitney's sum formula}~\cite[Theorem~5.3]{EH} says that the top 
Chern class of a direct sum of vector bundles is the product of the top Chern classes of the summands. 
This implies
 \begin{align*}
   c_{m+1}(\mathcal{J}_{\Phi / \Gr}^m(\mathcal{E})) \,\,\, = \,\,\,\,
 \prod_{i=0}^m \left( (d-2i)H+i\gamma_1 \right) 
 \qquad \hbox{in} \quad  \CH^{m+1}(\Phi).
\end{align*} 
We have $H^4=0$, by Example~\ref{ex:chowRingProjSpace}
and $H^3 \gamma_1^2 = \ast$,
since exactly one line meets two general lines and passes through a given point.
Finally, $H^2 \gamma_1^3 = 2 \ast$ and $H \gamma_1^4 = 2 \ast$, since
 exactly two lines  meet four general lines in $\PP^3$.  Putting these pieces together, we 
 get the desired formula
$$ \begin{matrix}
\deg(\mathcal{F}(X)) \ast &\!=& [X^4_\Phi] \cdot \gamma_1
\,\,=\,\, c_4(\mathcal{J}_{\Phi / \Gr}^3(\mathcal{E})) \cdot \gamma_1
\,\,\,= \,\,\, \gamma_1 \prod_{i=0}^3 \left( (d-2i)H+i\gamma_1 \right) \\
&\! =& \!\! \left( 6d^3{-}44d^2{+}72d \right) H^3\gamma_1^2 + \left( 11d^2{-}36d \right) H^2 \gamma_1^3 + 6d H \gamma_1^4  \,= \, 2d(d-3)(3d-2)\! \ast. 
\end{matrix}
$$
An analogous computation yields the degree of the flecnodal curve $F$ on the surface  $X$. We find
that $\, \deg(F) \ast \,\,=\,\, [X^4_\Phi] \cdot H \,\,=\,\, H \prod_{i=0}^3 \left( (d-2i)H+i\gamma_1 \right) 
\,\,=\,\, d(11d-24) \ast.  $
\end{proof}

\begin{proposition}
  \label{prop:parDegree}
  The degree of the parabolic surface of a general surface $X$ of degree $d \geq 3$~is
\begin{equation*}
  \! {\rm deg}\bigl(\mathcal{P}(X)\bigr) \,\,= \,\,  2d(d-2)(3d-4).
\end{equation*}
\end{proposition}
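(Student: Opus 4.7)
The plan is to mirror the strategy of Proposition~\ref{prop:flecDegree}, but to replace the ``further jet vanishing'' step by a ramification calculation. Concretely, the parabolic curve $\mathcal{P}^\ell(X) \subset \Gr$ parameterizing the rulings of $\mathcal{P}(X)$ is birationally equivalent to the ramification divisor of the double cover $\pi_1 \colon X^3_\Phi \to X$: its two sheets record the two principal tangents at a nonparabolic point and collapse precisely over the parabolic curve $\Pi \subset X$. Thus, with $Y = \pi_1^{-1}(\Pi)_{\mathrm{red}} \subset X^3_\Phi$ and $j \colon X^3_\Phi \hookrightarrow \Phi$ the inclusion, the sought degree satisfies $\deg(\mathcal{P}(X)) \cdot \ast = j_*[Y] \cdot \gamma_1$ in $\CH^5(\Phi)$.

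To evaluate $[Y]$ we would apply Riemann--Hurwitz to the finite cover $\pi_1$, giving $[Y] = K_{X^3_\Phi} - \pi_1^* K_X$ in $\CH^1(X^3_\Phi)$. Adjunction along $j$ yields $K_{X^3_\Phi} = (K_\Phi + c_1(\mathcal{J}^2_{\Phi/\Gr}(\mathcal{E})))|_{X^3_\Phi}$. As in the proof of Proposition~\ref{prop:flecDegree}, $c_1(\mathcal{J}^2_{\Phi/\Gr}(\mathcal{E})) = 3((d-2)H + \gamma_1)$. The $\PP^1$-bundle structure $\Phi = \PP(\mathcal{V}) \to \Gr$, together with $c_1(\mathcal{V}) = -\gamma_1$ and $K_\Gr = -4\gamma_1$, delivers $K_\Phi = -2H - 3\gamma_1$, and the adjunction $K_X = (d-4)H|_X$ gives $\pi_1^* K_X = (d-4)\, H|_{X^3_\Phi}$. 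Substituting collapses $[Y]$ to the compact expression $2(d-2)\, H|_{X^3_\Phi}$.

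The final step would be to expand $j_*[Y] \cdot \gamma_1 = 2(d-2)\, H\gamma_1 \cdot [X^3_\Phi]$ using $[X^3_\Phi] = dH\bigl((d-2)H + \gamma_1\bigr)\bigl((d-4)H + 2\gamma_1\bigr)$, together with $H^4 = 0$ (pulled back from $\CH(\PP^3)$) and the identities $H^3\gamma_1^2 = \ast$ and $H^2\gamma_1^3 = 2\ast$. These follow from $H^2 = \gamma_1 H - \gamma_2$ and the Grassmannian relations $\gamma_1^4 = 2\ast$, $\gamma_1^2\gamma_2 = \ast$, $\gamma_2^2 = \ast$. Only three terms survive, summing to $2d(d-2)(3d-4)\ast$, which yields the asserted formula for $\deg(\mathcal{P}(X))$.

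The main obstacle lies in the input needed for Riemann--Hurwitz: one must know that $X^3_\Phi$ is smooth for general $X$ and that $\pi_1$ has simple ramification exactly along the reduced parabolic lift $Y$. A useful sanity check is to compare the derived class $[Y] = 2(d-2)\, H|_{X^3_\Phi}$ with the direct consequence $\pi_1^*[\Pi] = 2[Y]$ of simple ramification, using the identity $[\Pi] = 4(d-2)\, H|_X$ from the complete intersection $\Pi = X \cap \{\det H_f = 0\}$; the two formulas agree, confirming that no special configurations such as godrons contribute excess ramification for a generic surface $X$.
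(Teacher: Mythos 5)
Your proof is correct, and it takes a genuinely different route from the paper's. The paper works in the Chow ring of $\Phi_X$ (pairs $(x,L)$ with $x\in X$): it writes the parabolic incidence curve $P^3_\Phi$ set-theoretically as $X^3_\Phi\cap P^1_\Phi$, handles the failure of the codimensions to add in $\CH(\Phi)$ via the normal bundle of $\Phi_X$ in $\Phi$ and \cite[Theorem~13.7]{EH}, and then divides the product $[X^3_\Phi][P^1_\Phi]=4(d-2)E\beta$ by the intersection multiplicity $2$ coming from the two principal tangents colliding along the parabolic curve. You instead work on $X^3_\Phi$ itself, identify $Y=P^3_\Phi$ as the ramification divisor of the double cover $\pi_1\colon X^3_\Phi\to X$, and compute its class by Riemann--Hurwitz plus adjunction; your intermediate quantities all check out ($K_\Phi=-2H-3\gamma_1$, $c_1(\mathcal J^2_{\Phi/\Gr}(\mathcal E))=3((d-2)H+\gamma_1)$, hence $K_{X^3_\Phi}=(3d-8)H|_{X^3_\Phi}$ and $[Y]=2(d-2)H|_{X^3_\Phi}$), and the final evaluation against $\gamma_1$ agrees with the paper's after pushforward, since $i_*(E\beta)=H\cdot c_3(\mathcal J^2_{\Phi/\Gr}(\mathcal E))$. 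What your approach buys is the complete avoidance of the excess-intersection bookkeeping; what it costs is the need to know that $X^3_\Phi$ is smooth and that $\pi_1$ is finite with simple ramification exactly over the reduced parabolic lift. You flag this correctly, and the paper's own Remark~\ref{sec2}.4 supplies exactly the needed smoothness (correcting \cite{ABT}); finiteness follows because a positive-dimensional fiber would force ${\rm rank}\,H_f(x)\le 2$, which a general $X$ avoids. Note also that your ``sanity check'' $\pi_1^*[\Pi]=2[Y]$ with $[\Pi]=4(d-2)e$ is really the shortest self-contained derivation of $[Y]$ (and is the paper's multiplicity-$2$ argument in miniature), so the Riemann--Hurwitz computation serves as an independent confirmation rather than the only road in. One trivial slip: after $H^4=0$ only two terms survive in the final expansion, namely $(3d-8)H^3\gamma_1^2$ and $4H^2\gamma_1^3$, which indeed sum to $(3d-4)\ast$ times $2d(d-2)$.
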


\begin{proof}
We consider the incidence variety $P^3_\Phi$ that consists of all pairs $(x,L)$ in $\Phi$
with the property that $x$ is parabolic on $X$ and $L$ has contact order at least $
3$ at $x \in X$. Set-theoretically, this is the intersection of the variety
$X_\Phi^3 \subset \Phi$ of pairs $(x,L)$ such that $L$ is a principal tangent
to $X$ at $x$ with the variety $P^1_\Phi \subset \Phi$ of pairs $(x,L)$ such
that $x$ is parabolic on $X$. Since $\codim_\Phi P^3_\Phi = 4 < 3 + 2 =
\codim_\Phi X_\Phi^3 + \codim_\Phi P^1_\Phi$, we cannot simply multiply their
classes in $\CH(\Phi)$ to get the class of $P^3_\Phi$.

We shall compute all relevant classes in $\CH(\Phi_X)$, where $\Phi_X $ is the
variety of  pairs $(x,L) \in \Phi$ with $x \in X$. The varieties $X_\Phi^3$
and $P_\Phi^1$ intersect with the expected codimension in $\Phi_X$, although
this intersection is not transverse. It is difficult to describe the Chow ring
of $\Phi_X$. We find some generators and relations of $\CH(\Phi_X)$ by taking
pullbacks of elements in $\CH(\Phi)$ under the inclusion $\Phi_X
\hookrightarrow \Phi$. 
Recall that
the pullback of a class $[Z]$ under a nice
enough morphism $f$ between varieties is $[f^{-1}(Z)]$,
by~\cite[Thm.~1.23]{EH}. We denote by $E \in \CH^1(\Phi_X)$ the pullback of
the hyperplane class $H \in \CH^1(\Phi)$ and by $\Gamma_i \in \CH(\Phi_X)$ the
pullback of $\gamma_i \in \CH(\Phi)$ for $i \in \lbrace 1,2 \rbrace$.

By definition, $P^1_\Phi$ is the preimage of the parabolic curve $P \subset X$ under the map $\Phi_X \to X$.
The parabolic curve $P$ is the intersection of $X$ with the Hessian surface of degree $4(d-2)$. 
Thus, the class of $P$ in $\CH(X)$ is $4(d-2)e$, where $e \in \CH^1(X)$ is the pullback of the hyperplane class $h \in \CH^1(\PP^3)$ under the inclusion $X \hookrightarrow \PP^3$.
Since $E \in \CH^1(\Phi_X)$ is also the pullback of $e \in \CH^1(X)$ under the projection $\Phi_X \to X$, we have $[P^1_\Phi] = 4(d-2)E \in \CH^1(\Phi_X)$.
We compute $[X_\Phi^3]$ in the Chow ring of $\Phi$ using
the formula (\ref{eq:bundlesum}) with $m=2$:
\begin{align*}
  [X_\Phi^3] &= c_3 (\mathcal{J}^2_{\Phi / \Gr}(\mathcal{E})) 
= \prod_{i=0}^2 ( (d\!-\!2i) H \!  +\! i \gamma_1 ) 
= dH \left( (d^2\!-\!6d\!+\!8) H^2 + (3d\!-\!8)H \gamma_1 + 2 \gamma_1^2  \right).
\end{align*}
The right hand side lives in the Chow ring of the $5$-dimensional variety $\Phi$
and we pull it back to the Chow ring of its $4$-dimensional subvariety $\Phi_X$.
That pullback is the codimension-$3$ cycle
$dE \beta$, where $\beta := (d^2-6d+8) E^2 + (3d-8)E \Gamma_1 + 2 \Gamma_1^2 \in \CH^2(\Phi_X)$.
Since pullback preserves codimension, 
$[X_\Phi^3] \in \CH^2(\Phi_X)$ cannot be equal to $dE \beta \in \CH^3(\Phi_X)$.
Instead~\cite[Theorem~13.7]{EH} tells us that $d E \beta = [X_\Phi^3] \cdot c_1 (\mathcal{N}_{\Phi_X / \Phi}) \in \CH^3(\Phi_X)$, where $\mathcal{N}_{\Phi_X / \Phi}$ is the normal bundle of $\Phi_X$ in $\Phi$.
By~\cite[Prop.-Def.~6.15]{EH}, we have  $c_1 (\mathcal{N}_{\Phi_X / \Phi}) = dE$ and $d E \beta =d E  [X_\Phi^3]$.

We cannot yet say that $[X_\Phi^3] = \beta$ in $\CH^2(\Phi_X)$ because
multiplication with $E$ has non-trivial kernel. However, since $E$ is a factor of 
$[P^1_\Phi]$, we conclude that
$[X_\Phi^3][P^1_\Phi] = 4(d-2)E \beta$ 
in the Chow ring of $\Phi_X$.
Since a general point on $X$ has two principal tangent lines, the intersection multiplicity of the varieties $P^1_\Phi$ and $X_\Phi^3$ is 2.
Therefore, by~\cite[Theorem~1.26]{EH}, we have  $[P^3_\Phi] = \frac12 [X_\Phi^3][P^1_\Phi] = 2(d-2)E\beta$ in $\CH(\Phi_X)$.
As in the case of flecnodal lines, the degree of the parabolic surface $\mathcal{P}(X)$ is the 
number of points in the $0$-dimensional cycle
\begin{align*}
  \,\,[P^3_\Phi] \cdot \Gamma_1
  \,\,= \,\, 2(d-2) \left( (d^2-6d+8) E^3 \Gamma_1 + (3d-8)E^2 \Gamma_1^2 + 2 E\Gamma_1^3 \right).
\end{align*}
Finally, we use the pushforward of the inclusion $i: \Phi_X \hookrightarrow \Phi$ to express the above monomials in the point class $\ast$ of $\CH(\Phi)$.
The pushforward $f_\ast$ of a proper morphism $f$ maps $[Z]$ to 0 if $\dim(f(Z)) < \dim(Z)$, and otherwise to $\nu [f(Z)]$ where $\nu \in \mathbb{Z}$ denotes the degree of the restricted map $f|_Z: Z \to f(Z)$.
Using the \emph{push-pull formula}~\cite[Theorem~1.23]{EH}, we derive $i_\ast(E^2 \Gamma_1^2) = i_\ast( i^\ast( H^2 \gamma_1^2) \cdot [\Phi_X] )  = H^2 \gamma_1^2 \cdot i_\ast([\Phi_X]) = dH^3 \gamma_1^2 = d \ast$.
Similarly, we get $i_\ast (E \Gamma_1^3) = 2d \ast$ and $i_\ast(E^3 \Gamma_1) = 0$.
Hence, $\deg(\mathcal{P}(X)) \ast = i_\ast([P^3_\Phi] \cdot \Gamma_1) =  2d(d-2)(3d-4) \ast$.
\end{proof}

\begin{proposition}
  \label{prop:edgeDegree}
  The degree of the edge surface of a general surface $X$ of degree $d \geq 4$ is
\begin{equation*}
  \! {\rm deg}\bigl(\mathcal{E}(X)\bigr) \,\,=\,\,   d(d-2)(d-3)(d^2+2d-4).
\end{equation*}
\end{proposition}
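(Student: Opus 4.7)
The plan is to mimic the incidence-and-Chern-class strategy of Propositions~\ref{prop:flecDegree} and~\ref{prop:parDegree}, now on the fiber product $\Psi := \Phi \times_{\Gr} \Phi$, a smooth $6$-dimensional variety parameterizing triples $(p,q,L)$ with $p,q \in L$. An edge of $X$ is encoded by such a triple with $p \neq q$, both $p$ and $q$ on $X$, the line $L$ tangent to $X$ at both points, and $T_p X = T_q X$. Writing $E_\Psi$ for the resulting curve, the projection $E_\Psi \to \Gr$ is generically $2$-to-$1$ onto $\mathcal{E}^\ell(X)$, so the target is to show
$$ \deg(\mathcal{E}(X)) \cdot \ast \,\,=\,\, \tfrac{1}{2}\, i_\ast\!\bigl([E_\Psi]\cdot \gamma_1\bigr), $$
where $i\colon \Psi \hookrightarrow \Phi \times \Phi$ is the inclusion and $\gamma_1$ is pulled back from $\Gr$.

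First I would combine the two tangency-at-a-point conditions exactly as in Proposition~\ref{prop:flecDegree}. Let $\pi_1, \pi_2\colon \Psi \to \Phi$ be the projections, $H_i := \pi_i^{\ast} H$, and $\mathcal{E}$ the pullback of $\mathcal{O}_{\PP^3}(d)$ to $\Phi$. The class of the tangent-line variety $X^2_\Phi \subset \Phi$ equals $c_2(\mathcal{J}^1_{\Phi/\Gr}(\mathcal{E})) = dH\bigl((d-2)H + \gamma_1\bigr)$. Pulling back under each $\pi_i$ and multiplying yields a codimension-$4$ cycle on $\Psi$ representing the bitangent incidence modulo an excess contribution along the diagonal $\Delta \subset \Psi$; this I would strip off with the excess-intersection formula \cite[Thm.~13.7]{EH}, as in the parabolic computation.

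The genuinely new ingredient is the \emph{shared-tangent-plane} condition. Let $\mathcal{S}$ be the tautological rank-$2$ subbundle on $\Gr$; its annihilator $\mathcal{S}^{\perp}$ has rank $2$ and its projectivization parameterizes planes of $\PP^3$ through the given line. At a bitangent triple $(p,q,L)$, the gradients $\nabla f(p)$ and $\nabla f(q)$ are nonzero sections of $\mathcal{S}^{\perp}$, and they become proportional exactly on $E_\Psi$. Thus, inside the bitangent variety, $E_\Psi$ is cut out by the single equation expressing proportionality of $\nabla f(p)$ and $\nabla f(q)$ modulo the linear forms vanishing on $L$; its class in $\CH^1(\Psi)$ I would compute by differentiating the induced section $[\nabla f(p)] \in \PP(\mathcal{S}^{\perp})$ along $\Psi$, expecting an explicit linear combination of $H_1$, $H_2$ and $\gamma_1$ whose coefficients are affine in~$d$.

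Feeding all three factors into Whitney's sum formula expresses $[E_\Psi] \in \CH^5(\Psi)$ as a polynomial in $H_1, H_2, \gamma_1, \gamma_2$; the fiber-product relations $H_i^2 = \gamma_1 H_i - \gamma_2$, together with the push-pull formula and the multiplication table of $\CH(\Gr)$ from the examples in Section~\ref{sec5}, reduce $i_\ast([E_\Psi]\cdot \gamma_1)$ to a multiple of $\ast$. The main obstacle is pinning down the first Chern class of the shared-tangent-plane bundle and checking that no spurious multiplicity appears along $\Delta$ or along the locus where $L$ is a principal tangent; this is the exact point at which Petitjean~\cite{Pet} needs Colley's double-point theory~\cite{Col1, Col2}. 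Once those contributions are handled correctly, the remaining polynomial expansion is routine, and division by $2$ for the $p \leftrightarrow q$ symmetry recovers the desired $d(d-2)(d-3)(d^2+2d-4)$.
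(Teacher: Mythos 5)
Your route is genuinely different from the paper's. The paper never works on a fiber product: it goes through the dual curve $\mathcal{E}^p(X)\subset(\PP^3)^*$, whose degree is already known from Proposition~\ref{prop:dual_edge_parabolic}, computes the class and then the geometric genus of the node-couple curve $C\subset X$ (importing Petitjean's counts of the nodes and cusps of $C$), transfers the genus to $\mathcal{E}^p(X)$ via Riemann--Hurwitz for the $2{:}1$ map ramified at the godrons, and finally applies Piene's formula $2(\deg+\mathrm{genus}-1)-\kappa$ for the degree of the developable surface dual to a space curve. What you propose is instead the stationary multiple-point computation on $\Phi\times_{\Gr}\Phi$, i.e.\ essentially Petitjean's own route \cite{Pet}, which the paper explicitly sets aside at the end of Section~\ref{sec5} as the ``more technical'' alternative requiring Colley's theory \cite{Col1,Col2} or a blow-up of the diagonal.

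The genuine gap is precisely the diagonal, and your plan does not resolve it. The product $\pi_1^*[X^2_\Phi]\cdot\pi_2^*[X^2_\Phi]$ lives in codimension $4$ of the six-fold $\Psi$, but the actual intersection $\pi_1^{-1}(X^2_\Phi)\cap\pi_2^{-1}(X^2_\Phi)$ contains the diagonal copy of $X^2_\Phi$, which has codimension $3$: the intersection is not dimensionally proper, so its class equals the closure of the bitangent locus \emph{plus} a contribution supported on the diagonal that must be evaluated by residual/excess intersection (Segre classes of the excess component), not merely ``stripped off.'' The analogy with Proposition~\ref{prop:parDegree} fails here: there the two loci met in the \emph{expected} dimension inside $\Phi_X$ and only an intersection multiplicity (equal to $2$) had to be identified geometrically, whereas \cite[Thm.~13.7]{EH} gives you no handle on a whole component of wrong dimension. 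The problem then compounds: your shared-tangent-plane divisor $\{\nabla f(p)\wedge\nabla f(q)=0\}$ contains the diagonal identically (for $p=q$ the proportionality is vacuous), so the excess propagates into the final codimension-$5$ class as well. Removing these contributions is exactly the content of Colley's stationary multiple-point formulas, and it is the step your proposal names as ``the main obstacle'' and then defers. The remaining ingredients are sound --- the proportionality condition is the vanishing of a section of $\wedge^2$ of the rank-two bundle of linear forms vanishing on $L$, twisted by $\mathcal{O}(d-1,d-1)$, so its class is indeed an explicit combination of $H_1$, $H_2$, $\gamma_1$; the relation $H_i^2=\gamma_1H_i-\gamma_2$ and the division by $2$ for the $p\leftrightarrow q$ symmetry are correct --- but without the excess computation the plan does not yet produce $d(d-2)(d-3)(d^2+2d-4)$.
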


\begin{proof}[Proof sketch.]
  We describe an idea for computing the degree of $\mathcal{E}(X)$ with a mix of classical methods and intersection theory, similar to Petitjean's approach~\cite[Prop.~4.14-4.16]{Pet}.
The Gauss map $\gamma: X \to (\PP^3)^\ast$ assigns to each point $x \in X$ the tangent plane to $X$ at $x$.
The preimage of $\mathcal{E}^p(X)$ under $\gamma$ is the node-couple curve $C \subset X$,
and the restriction $\gamma |_C: C \to \mathcal{E}^p(X)$ is a 2-to-1 covering of $\mathcal{E}^p(X)$.
From this we get that $\gamma_\ast([C]) = 2 [\mathcal{E}^p(X)]$ in $\CH( (\PP^3)^\ast )$. 

We can compute the class $[C]$ in $\CH^1(X)$ as follows.
Let us denote by $h^\ast \in \CH^1( (\PP^3)^\ast )$ the class of a hyperplane in $(\PP^3)^\ast$, and by $e^\ast \in \CH^1(X)$ the pullback of $h^\ast$ under the Gauss map $\gamma$.
Since $e \in \CH^1(X)$ (which we defined as the pullback of the hyperplane class $h \in \CH^1(\PP^3)$ under the inclusion $j:X \hookrightarrow \PP^3$) generates $\CH^1(X)$, we know that $e^\ast = \alpha e$ for some $\alpha \in \ZZ$. 
The push-pull formula implies 
\begin{align*}
  \gamma_\ast( (e^\ast)^2 ) &= (h^\ast)^2 \gamma_\ast( [X] ) = (h^\ast)^2  \cdot d(d-1)^2h^\ast = d(d-1)^2 \ast, \\
  j_\ast( (e^\ast)^2) &= \alpha^2 j_\ast(e^2) = \alpha^2 h^2 j_\ast([X]) = \alpha^2 h^2 \cdot dh = \alpha^2 d \ast,
\end{align*}
and thus $\alpha = d-1$.
Hence, we have $(d-1) \gamma_\ast (e) = \gamma_\ast(e^\ast) = d(d-1)^2 (h^\ast)^2$
and $\gamma_\ast(e) = d(d-1) (h^\ast)^2$.
Writing $[C] = \delta e$ for $\delta \in \ZZ$, we finally derive
\begin{align*}
  2 \deg( \mathcal{E}^p(X) ) (h^\ast)^2 = 2 [\mathcal{E}^p(X)] = \gamma_\ast([C]) = \delta \gamma_\ast(e) = \delta d(d-1) (h^\ast)^2
\end{align*}
and $\delta = (d-2)(d^3-d^2+d-12)$ by Proposition~\ref{prop:dual_edge_parabolic}.

The numbers $a$ and $b$ of cusps and nodes of the curve $C$ are given by \cite[Prop.~4.15]{Pet}:
\begin{align*}
  a &= 4d(d-2)(d-3)(d^3+3d-16), \\
  b &= \frac12 d(d-2)(d^7-4d^6+7d^5-45d^4+114d^3-111d^2+548d-960).
\end{align*}
Since the curve $C$ does not have any other singularities, we can apply the intersection theoretic genus formula:
the geometric genus of $C$ is given by the number of points in the 0-dimensional cycle $\frac12([C]^2 + K_X [C])$ plus the number  $1 -a -b$~(cf. \cite[Sec.~2.4.6]{EH}), where $K_X = (d-4) e$ is the \emph{canonical class} (cf.~\cite[Sec.~1.4.3]{EH}):
\begin{align*}
  j_\ast([C]^2+K_X[C]) &= \left(\delta^2+\delta(d-4) \right) j_\ast(e^2) = \left(\delta^2+\delta(d-4) \right) d \ast \quad\quad \text{and} \\
  \mathrm{genus}(C) &= \frac12 \left( \delta^2+\delta(d-4) \right) d +1 -a-b \\ 
  &= 3d^6-15d^5+27d^4-104d^3+340d^2-336d+1.
\end{align*}
The map $\gamma': C' \to \mathcal{E}^p(X)'$ between the normalizations of $C$ and $\mathcal{E}^p(X)$ is exactly ramified at the godrons, i.e., the points of tangency of the parabolic curve $P$ and the flecnodal curve $F$ on $X$ (see~\cite[pp.~229-231]{Pie3}). 
Thus, the number of those points is the number of points in the 0-dimensional cycle $\frac12 [P][F] = \frac12 \cdot 4(d-2)e \cdot (11d-24) e$, which is $\frac12 j_\ast( [P][F]) = 2(d-2)(11d-24) j_\ast(e^2) = 2d(d-2)(11d-24) \ast$.
We find the genus of $\mathcal{E}^p(X)$ by applying the Riemann-Hurwitz formula~\cite[Cor.~2.4]{Har} to $\gamma'$:
\begin{align*}
  2\,\mathrm{genus}(C) -2 \,\,=\,\, 2 (2 \, \mathrm{genus}( \mathcal{E}^p(X)) -2 ) + 2d(d-2)(11d-24).
\end{align*}
Hence, $\,{\rm genus}(\mathcal{E}^p(X)) = \frac12(3d^6-15d^5+27d^4-115d^3+386d^2-384d+2)$.
Finally, the degree of the surface $\mathcal{E}(X) = (\mathcal{E}^p(X) )^\vee$ is the degree of the tangential surface of
the curve  $\mathcal{E}^p(X)$. The latter degree equals $\,2\bigl( \deg(\mathcal{E}^p(X) ) + \mathrm{genus} (\mathcal{E}^p(X) ) {-}1 \bigr)
-a = d(d-2)(d-3)(d^2+2d-4)$ (see~\cite[Thm.~3.2]{Pie1}), since $a$ is also the number of cusps on $\mathcal{E}^p(X)$.
 \end{proof}

Proving the degrees of $\mathcal{C}(X)$ and $\mathcal{T}(X)$ is more technical. We will not include this here.
One method is Colley's multiple point theory~\cite{Col1, Col2}. Alternatively, one can write
 $\mathcal{C}(X)$ and $\mathcal{T}(X)$ as the intersection of loci of (principal) tangents in the fiber product $\Phi \times_{\Gr} \Phi$ or $\Phi \times_{\Gr} \Phi \times_{\Gr} \Phi$ and 
 remove extra components in the intersection by blowing these~up.

\section{Computing Visual Events}
\label{sec6}

Theorem~\ref{thm:surfaces} gives the degrees of the irreducible
components of the visual event surface when
$X$ is a general surface of degree $d$ in $\PP^3$.
Table \ref{tab:surfaces} below summarizes these
degrees for $d \leq 7$.
One notices that the degrees are now much larger than those
for curves in Table~\ref{tab:curves}.

\begin{table}[h]
\centering
\begin{tabular}{ccccccc}
  \Xhline{2\arrayrulewidth}
$d$ && $ {\rm deg}(\mathcal{F}(X)) $& 
$ {\rm deg}(\mathcal{C}(X)) $& 
$ {\rm deg}(\mathcal{T}(X)) $&
$  {\rm deg}(\mathcal{E}(X))$ &
$ {\rm deg}(\mathcal{P}(X)) $\\
\hline
 3 && 0 & 0 &  0 & 0 & 30 \\
4 && 80 & 0 & 0 & 160 & 128 \\
5 && 260 & 510 & 0 & 930 & 330 \\
6 && 576 & 2448 & 624 & 3168 & 672 \\
7 && 1064 & 7308 & 3808 & 8260 & 1190 \\
\Xhline{2\arrayrulewidth}
 \end{tabular}
\caption{\label{tab:surfaces}  
Degrees of the components of the
visual event surface of a general surface}
\end{table}

The degrees in Table \ref{tab:surfaces}
pose a challenge because a homogeneous polynomial
in four unknowns of degree $\delta$ 
can have as many as $\binom{\delta+3}{3}$ terms.
For instance, if $X$ is a quintic surface then its flecnodal surface $\mathcal{F}(X)$ has
degree $\delta = 260$, so the expected number of terms is
$\binom{\delta+3}{3} = 2997411$. 
In this section we address this challenge. See Example~\ref{ex:260victory} for a solution.

Throughout this section, we make use of the multiple root loci  for binary
forms. The ideals of these varieties are defined by homogeneous  polynomials
in the coefficients $c_0,c_1,\ldots c_d$ of
\begin{equation}
\label{eq:binaryform}
 c_0 t^d + c_1 t^{d-1}  + c_2 t^{d-2}  + \cdots +
  c_{d-1} t  + c_d .
  \end{equation} 
For a partition $\lambda = (\lambda_1,\ldots,\lambda_k) \in \mathbb{N}^k$ with $\sum_{i=1}^k \lambda_i
\le d$, we write $\Delta_\lambda(d)$ for the homogeneous prime ideal in
$\RR[c_0,\ldots,c_d]$ whose variety consists of polynomials \eqref{eq:binaryform} 
that have $k$ complex roots with multiplicities $\lambda_1,\ldots,\lambda_k$. 
The varieties are called {\em multiple root loci} in \cite{LS}.
For example, $\Delta_{(4)}(d)$ is the prime ideal for polynomials of degree $d$ with one quadruple root.

\begin{example} \label{ex:viervier} \rm
Let $d=4$ in (\ref{eq:binaryform}) and consider univariate quartics
that have a single root of multiplicity four. These quartics are the points on a rational 
normal curve in $\PP^4$. The prime ideal of this curve is $\Delta_{(4)}(4)$. It
 is  generated by the six $2 {\times} 2$-minors of the $2 {\times} 4$-matrix
\begin{equation*}
\begin{pmatrix}
12 c_0 & 3 c_1 & 2 c_2 & 3 c_3 \\
  3 c_1 & 2 c_2 & 3 c_3 & 12 c_4
\end{pmatrix}.
\end{equation*}
The variety of quintics $(d=5)$ with one root of multiplicity four
is the tangential surface of the rational normal curve in $\PP^5$.
Its ideal is the complete intersection of three quadrics:
\begin{equation}
\label{eq:l^4b}
\Delta_{(4)}(5) \,\,\, = \,\,\,
\big\langle\,
     20 c_0 c_4-8 c_1 c_3+3 c_2^2 \,,\,\,
50 c_0 c_5-6 c_1 c_4+c_2 c_3 \,,\,\,
20 c_1 c_5 - 8 c_2 c_4+3 c_3^2 \,
     \big\rangle.
\end{equation}

Another multiple root locus was seen in Example \ref{ex:binonical3}.
The ideal $\Delta_{(2,2)}(5)$ is minimally generated by
$10$ quintics. We used this to compute 
the edge surface of a degree $5$ curve.
\hfill $\diamondsuit$
\end{example}

We refer to \cite[Table 1]{LS} for details on the ideals
$\Delta_\lambda(d)$. Some relevant instances are
listed in Table \ref{tab:hwangrae}. Its entries 
are copied from \cite[Table 1]{LS}. For instance, the entry $\,6^{10},
8^{38}\,$ in the last column means that $\Delta_{(3,2)}(7) $  is minimally
generated by $10$ sextics and $38$ octics.

The ideals $\Delta_\lambda(d)$ can be computed either by direct
implicitization, or by  using {\em subresultants}~\cite{ADG}. The $i$-th
subresultant $S_i(h_1,h_2)$ of two polynomials $h_1(t)$ and $h_2(t)$ is a
polynomial of degree at most $i$ whose coefficients are the determinants of
particular minors of the Sylvester matrix of $h_1$ and $h_2$. The vanishing of
$S_i(h_1,h_2)$ for $0 \le i \le d-1$ means that the greatest common divisor
(GCD) of $h_1$ and $h_2$ has degree at least $d$. Moreover, if $S_d(h_1,h_2)$
is not zero, it is exactly this GCD. If we let $h_d$ be the polynomial
\eqref{eq:binaryform} and $h_d'$ be its derivative with respect to $t$, then
the condition that $h_d$ has roots with multiplicity $\lambda =
(\lambda_1,\ldots,\lambda_k)$ is equivalent to the fact that the GCD of $h_d$
and $h_d'$ has degree $\sum_{i=1}^k (\lambda_i - 1)$ and has roots with
multiplicities $\lambda' = (\lambda_1-1,\ldots,\lambda_k-1)$. This allows us
to compute the ideal $\Delta_\lambda(d)$ recursively.


\begin{table}[h]
  \centering \begin{tabular}{cccccccccc}
    \Xhline{2\arrayrulewidth}
\hbox{Ruled surface} & {Partition}  &&   $d=4$ && $d=5$ && $d=6$ && $d=7$ \\
\hline
\rule{0pt}{2.2ex}
$\mathcal{F}(X)$ & $\lambda = (4)$ &&      $2^6$    &&    $2^3$   && $2^1,3^3,4^1$  && $ 4^{20}$           \\
$\mathcal{C}(X)$ & $\lambda = (3,2)$ &&        &&      $4^{28}$ &&  $4^1,5^3,6^{31} $   && $ 6^{10}, 8^{38} $           \\
$\mathcal{T}(X)$ & $\lambda = (2,2,2)$ &&         &&        &&   $ 4^{45}$    && $ 6^{78}$   \\
\Xhline{2\arrayrulewidth}
\end{tabular}
\caption{\label{tab:hwangrae}  
The ideals $\Delta_{(\lambda)}(d)$ of multiple root loci relevant for visual events of surfaces}
\end{table}

In what follows we assume that the ideals $\Delta_\lambda(d)$ have been 
pre-computed for $d \le 7$. We use these data to compute the
curves $\mathcal F^\ell(X)$, ${\mathcal C}^\ell(X)$ and $\mathcal{T}^\ell(X)$
in the Grassmannian ${\rm Gr}(1,\PP^3)$. The correspondence between the three
multi-local events $\mathcal{F},\mathcal{C},\mathcal{T}$ and the three special
partitions $\lambda$ was seen on the right side in Figure \ref{fig:surface-graph}, where $\lambda$ was denoted by~$m$.

Let $f = f(x_1,x_2,x_3,x_4)$ be the polynomial of degree $d$ that defines the
surface $X$. We parameterize the line in $\PP^3$ with Pl\"ucker coordinates
$q$ using a parameter $t$. For instance, we can write (\ref{eq:lineparaone})
dually as $\,z(t) =  (q_{12}: tq_{12}: tq_{13} -  q_{23}: tq_{14} - q_{24})$.
We substitute $z(t)$ into the polynomial $f$, and we regard $f(z(t))$ as a
univariate polynomial in $t$, written as in (\ref{eq:binaryform}). The
coefficients $c_i$ are now homogeneous expressions of degree $d$ in the
Pl\"ucker coordinates $q$. At this point, we  substitute these expressions
$c_i(q)$ into the generators of $\Delta_{\lambda}(d)$. The result is an ideal
in the Pl\"ucker coordinates $q$ that defines the desired curve set-theoretically. 
The same method can be applied when local coordinates on the
Grassmannian ${\rm Gr}(1,\PP^3)$ are preferred. In this case, we parameterize
the line in $\PP^3$ by $z(t) = (1 : t : \alpha + t \gamma : \beta + t \delta)$.

\begin{example} \label{ex:260victory}
\rm Let $d=5$ and consider the smooth quintic surface $X$ defined by
\[ f \,\,=\,\, x_1^5+ x_2^5 + x_3^5 + x_4^5 + (x_1 + x_2 + x_3 + x_4)^5 + x_1 x_2 x_3
x_4(x_1 + x_2 + x_3 + x_4). \] 
We compute the curve $\mathcal F^\ell(X)$ in
${\rm Gr}(1,\PP^3)$ that represents the flecnodal surface.
Its prime ideal has degree $260$ and is generated by $10$ sextics plus the
Pl\"ucker quadric~(\ref{eq:pluckerrel}). This computation was done with the
method above, starting from the ideal $\Delta_{(4)}(5)$  in
\eqref{eq:l^4b}.
\hfill $\diamondsuit$
\end{example}

Let us shift gears and focus on the local events $\mathcal{P}$ and
$\mathcal{E}$, seen on the left in Figure \ref{fig:surface-graph}. We start
with the parabolic surface $\mathcal{P}(X)$. Let $X$ be defined by a
polynomial $f \in \RR[x_1,x_2,x_3,x_4]$. The ideal $I(P)$ of the parabolic
curve $P$ is defined by $f$ and the determinant of the Hessian matrix $H_f$.
Consider the incidence variety of the parabolic curve and its tangent planes,
that is $\lbrace (x,T_x(X)) \mid x \in P \rbrace \,\subset \,\PP^3 \times
(\PP^3)^*$. We compute the ideal of the incidence variety by adding the $2
\times 2$-minors of the matrix $\left(\begin{smallmatrix}  \partial{f}/\partial{x_1} &
 \partial{f}/\partial{x_2} &  \partial{f}/\partial{x_3} &  \partial{f}/\partial{x_4} \\
y_1 & y_2 & y_3 & y_4 \end{smallmatrix} \right)$ to $I(P)$. We then
saturate the resulting ideal by $\langle x_1,x_2,x_3,x_4 \rangle$ 
and afterwards eliminate $x_1,x_2,x_3,x_4$.
This furnishes the ideal of the dual curve
$\mathcal{P}^p(X)$ in   $(\PP^3)^*$, which  encodes the developable surface $\mathcal{P}(X)$.

\begin{proposition}
\label{ref:quarticsextic} If $X$ is a general cubic surface, the curve
$\mathcal{P}^p(X)$ is a complete intersection of a quartic and a sextic,
obtained from the two basic invariants of ternary cubics.
\end{proposition}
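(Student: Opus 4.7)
My plan is to exploit the classical invariant theory of ternary cubic forms. The ring of $SL_3$-invariants of ternary cubics is freely generated by the two Aronhold invariants $S$ and $T$, homogeneous of degrees $4$ and $6$ respectively in the ten coefficients of a cubic. The discriminant of a ternary cubic is (up to a nonzero scalar) $S^3 - 27 T^2$, of degree $12$. A smooth cubic has nonvanishing discriminant; a nodal cubic satisfies $S^3 - 27 T^2 = 0$ but $(S, T) \neq (0, 0)$; and an irreducible cuspidal cubic is characterized by $S = T = 0$.

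For the cubic surface $X = \{f = 0\}$ and a variable plane $H_y = \{y \cdot x = 0\} \subset \PP^3$, the restriction $f|_{H_y}$ is a family of ternary cubic forms parameterized by $y \in (\PP^3)^*$. Applying $S$ and $T$ to this family produces polynomials $\sigma(y), \tau(y) \in \RR[y_1, y_2, y_3, y_4]$. First I would pin down their degrees in $y$: since $\sigma^3 - 27 \tau^2$ must (up to scalar) be the defining equation of the dual surface $X^\vee$, whose degree is $d(d-1)^2 = 12$ for $d = 3$, we are forced to have $\deg \sigma = 4$ and $\deg \tau = 6$. Concretely, one picks a parameterization of $H_y$ by a $4 \times 3$ matrix $A(y)$ with entries linear in $y$, evaluates $f$ on $A(y)\lambda$ to obtain the ternary cubic in $\lambda$ whose coefficients are polynomial in $y$, applies the explicit symbolic formulas for the Aronhold invariants, and extracts the irreducible factors of the expected degree.

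Next I would identify $V(\sigma, \tau) \subset (\PP^3)^*$ with $\mathcal{P}^p(X)$. A tangent plane $T_x X$ to $X$ at a smooth point $x$ cuts out a plane cubic with a singularity at $x$; this singularity is a node at generic points and a cusp precisely when $x$ is parabolic. By the invariant-theoretic criterion above, the cusp condition is $\sigma(T_x X) = \tau(T_x X) = 0$, so $\mathcal{P}^p(X) \subseteq V(\sigma, \tau)$. By Proposition~\ref{prop:dual_edge_parabolic}, $\deg \mathcal{P}^p(X) = 4 d(d-1)(d-2) = 24$, which equals the Bezout degree $4 \cdot 6 = 24$ of the complete intersection. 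Once $V(\sigma, \tau)$ is of the expected dimension one, the equality $V(\sigma, \tau) = \mathcal{P}^p(X)$ follows as schemes.

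The hard part will be verifying that $\sigma$ and $\tau$ share no common factor, i.e.\ that $V(\sigma, \tau)$ is a genuine complete intersection of dimension one and acquires no spurious components. The zero locus $\{S = T = 0\}$ on the space of all ternary cubics includes not only irreducible cuspidal cubics but several other degenerate configurations, and one must check that no such sections arise as tangent-plane sections of a general cubic $X$. My preferred resolution is to run the construction on a single sufficiently generic cubic using computer algebra: if $V(\sigma, \tau)$ is one-dimensional and of degree $24$ there, the same holds for all general $X$ by upper semicontinuity. An alternative is to argue directly that $\sigma^3 - 27 \tau^2$ is the irreducible reduced equation of the irreducible surface $X^\vee$, which forces $\gcd(\sigma, \tau) = 1$ and hence dimensional correctness of the intersection.
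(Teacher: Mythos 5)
Your overall strategy---apply the two basic invariants $S$ (degree $4$) and $T$ (degree $6$) of ternary cubics to the family of plane sections of $X$, and use the fact that $S=T=0$ characterizes cuspidal cubics---is exactly the paper's. The gap is in how you produce that family, and hence in the degree count. If you parameterize the plane $H_y$ by a $4\times 3$ matrix $A(y)$ with entries linear in $y$, the coefficients of the ternary cubic $f(A(y)\lambda)$ have degree $3$ in $y$, so $S$ and $T$ yield polynomials of degrees $12$ and $18$ in $y$, not $4$ and $6$. Moreover, no such $A(y)$ can have rank $3$ for all $y$, and its degeneracy locus contaminates the invariants with extraneous factors---which is precisely what your unexplained ``extract the irreducible factors of the expected degree'' step would have to remove (it can be justified by tracking the $\mathrm{GL}_3$-weights of $S$ and $T$ under change of the parameterization, but you have not done this). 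Your argument that $\deg\sigma=4$ is \emph{forced} because $\sigma^3-27\tau^2$ must be the degree-$12$ equation of $X^\vee$ is circular with this parameterization, since there $S^3-27T^2$ has degree $36$.

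The paper's proof avoids all of this with one move you are missing: represent the cubic surface $X$ as the image of $\PP^2$ under the linear system of cubics $f_1,\dots,f_4$ through six points (the blow-up model). The plane section of $X$ by $\{\sum_i y_ix_i=0\}$ then pulls back to the plane cubic $y_1f_1+\cdots+y_4f_4$, whose coefficients are \emph{linear} in $y$, so applying $S$ and $T$ immediately produces a quartic and a sextic in $y$. On the plus side, your closing observations strengthen a point the paper passes over in silence: the irreducibility of the degree-$12$ dual surface $X^\vee=\{\sigma^3-27\tau^2=0\}$ forces $\gcd(\sigma,\tau)=1$, so $V(\sigma,\tau)$ really is a one-dimensional complete intersection, and the degree match $4\cdot 6=24=\deg\mathcal{P}^p(X)$ from Proposition~\ref{prop:dual_edge_parabolic}, together with purity and the Cohen--Macaulay property of complete intersections, identifies it with $\mathcal{P}^p(X)$. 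If you adopt the paper's linear family and keep that endgame, the argument is complete.
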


\begin{proof}
A classical fact from invariant theory states that the ring of invariants for ternary cubics
is generated by a quartic and a sextic, and these vanish precisely when the cubic has a cusp.
We represent $X$ as the blow-up of $\PP^2$ at six points, namely as the image
of  the map to $\PP^3$ defined by four independent cubics $f_1,f_2,f_3,f_4$ in $x,y,z$
that vanish at these points. We now consider the cubic
$y_1 f_1 + y_2 f_2 + y_3 f_3 + y_4 f_4$, where $y_1,y_2,y_3,y_4$ are unknowns.
Plugging this cubic into the two basic invariants gives the condition for a plane to
meet $X$ in a cuspidal curve. Hence that locus in $(\PP^3)^*$ is the complete intersection of a quartic and a sextic.
\end{proof}

For a general parabolic point $x$ of $X$, the Hessian matrix $H_f(x)$ has rank
three. Its kernel represents a unique point $p_x$ in $\PP^3$.  We use
the following simple fact to compute $\mathcal{P}^\ell(X)$.

\begin{lemma} \label{lem:thepointpx}
For $x \in P$, the point $p_x$ lies on the unique principal tangent of $\,X$ at $x$. 
\end{lemma}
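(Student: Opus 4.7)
The plan is to verify that $p_x$ lies in the intersection $T_x(X) \cap \{y^T H_f(x) y = 0\}$ of the tangent plane and the Hessian quadric at $x$. Together with $x$ itself, this will force the whole line $\overline{x\, p_x}$ to lie in this intersection, which at a parabolic point is the unique (double) principal tangent.

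The key tool is Euler's identity for the homogeneous polynomial $f$ of degree $d$. Differentiating $\sum_i x_i \partial_i f = d f$ with respect to $x_j$ yields the matrix-vector relation $H_f(x) \cdot x = (d-1)\nabla f(x)$, which lets me shuttle between the Hessian and the gradient. Using this together with $H_f(x) \cdot p_x = 0$, I would first show $p_x \in T_x(X)$: taking inner product of $H_f(x) x = (d-1)\nabla f(x)$ with $p_x$ gives
\[
(d-1)\,\nabla f(x) \cdot p_x \,\,=\,\, x^T H_f(x)\, p_x \,\,=\,\, 0.
\]
Next, $p_x$ lies trivially on the Hessian quadric since $p_x^T H_f(x) p_x = 0$; likewise $x$ lies on it because $x^T H_f(x) x = (d-1)\, x^T \nabla f(x) = d(d-1) f(x) = 0$. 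Because the mixed term $x^T H_f(x) p_x$ also vanishes, the entire line $\overline{x\, p_x}$ is contained in the Hessian quadric, and it clearly lies in $T_x(X)$ as well.

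Since $x$ is parabolic, $\det H_f(x) = 0$, and the intersection $T_x(X) \cap \{y^T H_f(x) y = 0\}$ degenerates from a pair of lines to a single double line, which is by definition the unique principal tangent at $x$. Thus $\overline{x\, p_x}$ is this principal tangent, provided that $p_x \neq x$; but $p_x = x$ would force $H_f(x) x = 0$ and hence $\nabla f(x) = 0$ via Euler, contradicting the smoothness of $X$ at $x$. The main obstacle here is hardly an obstacle at all: once Euler's identity is recorded, every step reduces to a one-line linear-algebra check, and the only mild subtlety is ruling out the degenerate case $p_x = x$ so that the line $\overline{x\, p_x}$ is genuinely defined.
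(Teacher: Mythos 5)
Your proposal is correct and follows essentially the same route as the paper's proof: Euler's relation identifies $x\,H_f(x)$ with the gradient $\nabla f(x)$ (up to the factor $d-1$), the identity $x^T H_f(x)\, p_x = 0$ places $p_x$ on the tangent plane, and $p_x^T H_f(x)\, p_x = 0$ places it on the Hessian quadric, so $x$ and $p_x$ span the principal tangent. Your additional checks (that the whole line lies in the quadric and that $p_x \neq x$) are correct refinements of the same argument rather than a different approach.
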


\begin{proof}
The relation $x\, H_f(x) \, p_x^T = 0$ holds. Euler's relation shows that
$\,x \,H_f(x)$ is the gradient vector of $f$ at $x$. Hence $p_x$  lies on the tangent plane to $X$ at $x$.
Furthermore,  $p_x$ belongs to the principal tangent since
$\,p_x \, H_f(x)\, p_x^T $ is zero. 
Hence $x$ and $p_x$ span the principal tangent.
\end{proof}

The curve $\mathcal{P}^\ell(X) \subset {\rm Gr}(1,\PP^3)$ is the
collection of the lines spanned by a general parabolic point $x$ and the
corresponding point $p_x$ from Lemma~\ref{lem:thepointpx}. This allows us to
compute the ideal of $\mathcal{P}^\ell(X)$ in dual Pl\"ucker coordinates
$q_{12},q_{13},\ldots,q_{34}$. First, we recover the ideal ${I}$ of the
incidence variety $\lbrace (x,y) \,|\, x \in P, \,y \in \ker H_f(x)
\rbrace$ by adding the four entries of the column vector $\,H_f(x) \cdot y\,$
to the ideal $I(P) = \langle f, \det H_f(x) \rangle$. Secondly, we consider
the map from the coordinate ring of the Grassmannian to the quotient ring of
${I}$ that maps  Pl\"ucker coordinates $q_{ij}$ to the $2 \times 2$-minors of
$\left( \begin{smallmatrix}  x_1 & x_2 & x_3 & x_4\\ y_1 & y_2 & y_3 & y_4
\end{smallmatrix} \right)$. The kernel of this ring map is the ideal of the
curve $\mathcal{P}^\ell(X) \subset {\rm Gr}(1,\PP^3)$. This ideal is generated
by $4$~cubics and $6$ quartics, plus the Pl\"ucker quadric, in the unknowns
$q_{12},\ldots,q_{34}$. One verifies computationally that the ideal defines a
curve of degree $30$ in $\PP^5$ for a cubic surface $X$. Of course, this curve
is $\mathcal{P}^\ell(X)$.

\begin{example}
\rm Let $d=3$ and consider the Fermat cubic $X$ defined by  $\,f \,=\, x_1^3 + x_2^3 + x_3^3 + x_4^3$.
We can easily compute the ideal of the curve $\mathcal{P}^\ell(X)$ as described above,
and from this we find  the parabolic surface $\mathcal{P}(X)$.
It decomposes into irreducible components of low degree:
\begin{align*}
&(x_0+x_1) \cdot (x_0+x_2) \cdot(x_0+x_3) \cdot(x_1+x_2) \cdot(x_1+x_3) \cdot(x_2+x_3) \\
\cdot 
&(x_0^2-x_0x_1+x_1^2) \cdot (x_0^2-x_0x_2+x_2^2) \cdot (x_0^2-x_0x_3+x_3^2)\\
 \cdot
&(x_1^2-x_1x_2+x_2^2) \cdot (x_1^2-x_1x_3+x_3^2) \cdot (x_2^2-x_2x_3+x_3^2) \\
\cdot
& (x_1^3+x_2^3+x_3^3) \cdot (x_0^3+x_2^3+x_3^3) \cdot (x_0^3+x_1^3+x_3^3) \cdot (x_0^3+x_1^3+x_2^3).
\end{align*}
This is one of the few cases where symbolic computation of the equation of $\mathcal{P}(X)$ is easy.
\hfill $\diamondsuit$
\end{example}

\begin{example}\label{ex:stabbing_victory} \rm Let $d=3$ and fix  the cubic
$\,f \,=\, x_1^3 + x_2^3 + x_3^3 + x_4^3 + (x_1 + 2 x_2 + 3 x_3 + 4 x_4)^3$.
It defines our surface $X$.
Using the method above, we rapidly compute the ideal of $\mathcal{P}^\ell(X)$.
We demonstrate how to find the visual events of type $\mathcal{P}$ as the camera moves along a line. 

Consider the line with parametric representation $\, z(t) \,=\, (t: 1: t-1:
t+1)\,$ in $\PP^3$. Let $Q$ be the skew-symmetric $4 \times 4$ matrix obtained from
(\ref{eq:skewsymP}) by substituting to dual Pl\"ucker coordinates. 
We add the four coordinates of $ z(t) \cdot Q$ to the
ideal of $\mathcal{P}^\ell$, we then saturate with respect to $\langle
q_{12},\ldots,q_{34} \rangle$, and thereafter we eliminate the unknowns $q_{ij}$. The result is
$$ 
\begin{matrix}
495403946635821355157683145728 t^{30}+4349505253226024309192581220352 t^{29} \\ + 
18437739306679654261938338946432 t^{28}+50562321054013553614808463278912 t^{27} \\
+ \,\cdots\,\, \cdots\,\,
-81509153943200707008 t^2
-1885273424647073088 t -19650742648215232.
\end{matrix}
$$
This polynomial has $30$ distinct complex roots. Precisely $8$ of them are real, namely
$$ \begin{small} \begin{matrix} \!\!
\bigl\{ -1.01358602985259876, -1.011352289518814, -0.600974923580648806,
    -0.35014676100811994, \\ \quad -0.2668550692437184,\, -0.191676056625314,\,
    -0.0811161566932513655, \, 0.378943747730770221 \bigr\}.
\end{matrix}    
\end{small}
    $$
    These $8$ roots mark the visual events of type $\mathcal{P}$ 
    as the viewpoint travels along the line $z(t)$.    
    
    The univariate polynomial of degree $30$ can also be computed from the
    dual curve $\mathcal{P}^p$. Let $g_1$ and $g_2$ be  the polynomials in $y_1,y_2,y_3,y_4$ 
    of degree four and six promised in   Proposition  \ref{ref:quarticsextic}. We
    augment the ideal $\,I(\mathcal{P}^p) = \langle g_1,g_2 \rangle \,$
    by the $3 \times 3$-minors of the $3 \times 4$-matrix
    $$ \begin{pmatrix}
    \partial g_1 / \partial x_1 &  \partial g_1 / \partial x_2 &  \partial g_1 / \partial x_3 &  \partial g_1 / \partial x_4 \\
    \partial g_2 / \partial x_1 &  \partial g_2 / \partial x_2 &  \partial g_2 / \partial x_3 &  \partial g_2 / \partial x_4 \\
  t & 1 & t-1 & t+1 \end{pmatrix}. $$
  We then saturate the resulting ideal by the ideal of the six $2 \times 2$-minors in
  first two rows, and finally we eliminate $x_1,x_2,x_3,x_4$.
  This gives the same polynomial of degree $30$ in $t$.    
\hfill $\diamondsuit$
\end{example}

We found that the computation of the edge surface $\mathcal{E}(X)$ is more
challenging than that of the parabolic surface $\mathcal{P}(X)$. Consider the
case when $X$ is a general quartic. Here,  the surface $\mathcal{E}(X)$ has
degree $160$, and hence so does the curve $\mathcal{E}^\ell(X)$ in ${\rm
Gr}(1,\PP^3)$. We succeeded in computing the ideal of this curve only for
quartics $X$ that are singular or very special. For instance, if $X $ is the
Fermat quartic then $\mathcal{E}(X)$ a surface of degree $80$, with
multiplicity $2$. Since $\mathcal{E}(X)$ is developable, we could also try to
use $\mathcal{E}^p(X)$ as an encoding. Unfortunately, the degree is then even
higher. Namely, by Proposition~\ref{prop:dual_edge_parabolic}, the dual curve
$\mathcal{E}^p(X)$ has degree $480$ in $(\PP^3)^*$. The  computation of edge
surfaces $\mathcal{E}(X)$ definitely requires further research.

\bigskip \bigskip 

\begin{small}

\noindent
{\bf Acknowledgments.} This project started in May 2016 at the GOAL workshop
  in Paris. We thank Mohab Safey El Din, Jean-Charles Faug\`ere, Jon
  Hauenstein, and Jean Ponce for their help in the initial stages. We are also
  grateful to Joachim Rieger for an inspiring discussion on singularity
  theory, and to Emre Sert\"oz for helpful comments on intersection theory.
  Kathl\'en Kohn was funded by   the Einstein Foundation Berlin.  Bernd
  Sturmfels received partial support from the US National Science Foundation
  (DMS-1419018) and the Einstein Foundation Berlin. Matthew Trager was supported in part by the ERC advanced grant
  VideoWorld, the Institut Universitaire de France, the Inria-CMU
  associated team GAYA, and the ANR grant RECAP.
\end{small}

\bigskip \bigskip

\begin{small}

\end{small}

\bigskip \bigskip

\noindent
\footnotesize {\bf Authors' addresses:}

\smallskip

\noindent Kathl\'en Kohn,
TU Berlin, Germany,
{\tt kohn@math.tu-berlin.de}.

\smallskip

\noindent Bernd Sturmfels,
MPI Leipzig, Germany,
 and UC Berkeley, USA,
 {\tt bernd@mis.mpg.de},
{\tt bernd@berkeley.edu}.

\smallskip

\noindent Matthew Trager,
Inria, \'Ecole Normale Sup\'erieure Paris, CNRS, PSL Research University,
France, {\tt matthew.trager@inria.fr}.


\begin{thebibliography}{10}

\setlength{\itemsep}{-0.4mm}

\bibitem{ADG}
J.~Abdeljaoued, G.M.~Diaz-Toca and L.~Gonzalez-Vega:
{\em Minors of B\'ezout matrices, subresultants
and the parametrization of the degree of the polynomial greatest 
common divisor}, International Journal of Computer Mathematics
{\bf 81} (2004) 1223--1238.

\bibitem{Arn} V.~I.~Arnol'd: {\em Singularities of smooth mappings}, Russian
Math.~Surveys {\bf 23} (1968) 1--43.

\bibitem{ABT} E.~Arrondo, M.~Bertolini and C.~Turrini:
{\em A focus on focal surfaces},
Asian Journal of Mathematics {\bf 5} (2001) 535--560.

\bibitem{BCMG} M.C.~Beltrametti, E.~Carletti, G.~Monti Bragadin
and  D.~Gallarati: {\em Lectures on Curves, Surfaces and Projective Varieties: a Classical View of Algebraic Geometry}, 
EMS Textbooks in Mathematics, Vol. 9, European Mathematical Society, 2009.

\bibitem{Ber}
M-A.~Bertin: {\em On the singularities of the trisecant surface to 
a space curve},  Matematiche (Catania) {\bf 53} (1998) 15--22.
 
\bibitem{BD} K.~W.~Bowyer and C.~R.~Dyer: {\em Aspect graphs: An introduction
and survey of recent results}, International Journal of Imaging Systems and
Technology {\bf 2} (1990) 315--328.

\bibitem{cayley}
A.~Cayley: {\em On the singularities of surfaces}, The Cambridge and Dublin Mathematical Journal {\bf VII} (1852) 166--171.

\bibitem{Col1}
S.~Colley: {\em Lines having specified contact with projective varieties},
  Proceedings of the 1984 Vancouver Conference in Algebraic Geometry {\bf 6} (1986) 47--70.

\bibitem{Col2}
S.~Colley: {\em Enumerating stationary multiple points},
   Advances in Math.~{\bf 66} (1987) 149--170.
 
\bibitem{Dol} I.V.~Dolgachev: {\em Classical Algebraic Geometry: a Modern View},
Cambridge Univ.~Press, 2012.

\bibitem{edge} W.L.~Edge: {\em The Theory of Ruled Surfaces},
Cambridge University Press, 1931.

\bibitem{EH} D.~Eisenbud and J.~Harris:
{\em 3264 and All That: A Second Course in Algebraic Geometry},
Cambridge University Press, 2016.

\bibitem{ESW} D.~Eisenbud and F-O.~Schreyer:
{\em Resultants and Chow forms via exterior syzygies},
 Journal of the American  Mathematical Society {\bf 16} (2003) 537--579. 
 
\bibitem{ForPon}  D.A.~Forsyth and J.~Ponce:
{\em Computer Vision: A Modern Approach}, 2nd ed.,
Pearson, 2012.

\bibitem{GKZ} I.M.~Gel'fand, M.M.~Kapranov and A.V.~Zelevinsky:
{\em Discriminants, Resultants and Multidimensional Determinants},
Birkh\"auser, Boston, 1994.

\bibitem{Har} R.~Hartshorne: {\em Algebraic Geometry}, Springer-Verlag, New York and Heidelberg, 1977.

\bibitem{Joh} P.~Johansen: {\em The geometry of the tangent developable},
Computational methods for algebraic spline surfaces, Springer, Berlin, 2005,
pp 95-106.

\bibitem{Ker}   Y.~L.~Kergosien: {\em La famille des projections orthogonales
d'une surface et ses singularit\'es}, Comptes Rendus Acad. Sc. Paris S\'er.I Math {\bf 292} (1981) 929--932.

\bibitem{Koen} J.~J.~Koenderink: {\em Solid Shape}, MIT Press, 1990.

\bibitem{KD76}  J.~J.~Koenderink and A.~J.~van~Doorn: {\em The singularities
of the visual mapping}, Biological Cybernetics {\bf 24} (1976) 51--59.


\bibitem{Kohn} K.~Kohn: {\em Coisotropic hypersurfaces in the Grassmannian}, {\tt arXiv:1607.05932}.

\bibitem{KNT} K.~Kohn, B.I.U.~N\o dland and P.~Tripoli: {\em Secants,
bitangents, and their congruences},
in Combinatorial Algebraic Geometry, 
Fields Institute, (eds. G. Smith and B. Sturmfels), to appear.

\bibitem{LS} H.~Lee and B.~Sturmfels: {\em Duality of multiple root loci},
  Journal of Algebra {\bf 446} (2016) 499--526. 

\bibitem{Pae} S.~Pae and J.~Ponce: {\em On computing structural changes in evolving surfaces and their appearance}, International Journal of Computer Vision {\bf 43.2} (2001) 113-131.
 
\bibitem{Pet}   S.~Petitjean: 
{\em The complexity and enumerative geometry of aspect graphs of
   smooth surfaces}, Algorithms in algebraic geometry and applications (Santander,
   1994), 317--352,  Progr. Math., 143, Birkh\"auser, Basel, 1996.

\bibitem{PPK} S.~Petitjean, J.~Ponce and D.~Kriegman: {\em Computing exact aspect graphs
   of curved objects: Algebraic Surfaces}, International Journal of Computer Vision
 {\bf  9} (1992) 231--255.

\bibitem{Pie1} R.~Piene: {\em Numerical characters of a curve in projective n-space},
in Real and Complex Singularities, Oslo, (ed.~P.~Holm), 1976.

\bibitem{Pie3} R.~Piene: {\em Some formulas for a surface in $\PP^3$},
  in Algebraic Geometry, (ed.~L.D.~Olson), Lecture Notes in Mathematics {\bf 687}, Springer, Berlin, Heidelberg, 1978.

\bibitem{Pie} R.~Piene: {\em Cuspidal projection of space curves},
Mathematische Annalen {\bf 256} (1981) 95--119.

\bibitem{Plat} O.~A.~Platonova: {\em Projections of smooth surfaces}, Journal of Mathematical Sciences {\bf 35.6} (1986) 2796-2808.


\bibitem{PH} J.~Ponce and M.~Hebert: {\em On image contours of projective
shapes}, European Conference on Computer Vision. Springer International
Publishing, 2014.

\bibitem{Pon90} J.~Ponce and D.~J.~Kriegman: {\em Computing exact aspect
graphs of curved objects: Parametric surfaces}, Department of Computer
Science, University of Illinois at Urbana-Champaign, 1990.

\bibitem{RS2} K.~Ranestad and B.~Sturmfels:
{\em On the convex hull of a space curve}, Advances in Geometry
{\bf 12} (2012) 157--178.

\bibitem{Rie92} J. H.~Rieger: {\em Global bifurcation sets and stable
projections of nonsingular algebraic surfaces}, International Journal of
Computer Vision {\bf 7} (1992) 171--€"194.

\bibitem{Rie93} J.H.~Rieger: {\em Computing view graphs of algebraic surfaces},
 Journal of Symbolic Computation {\bf 16} (1993) 259--272.

\bibitem{Sal} G.~Salmon: {\em A Treatise on the Analytic Geometry of Three Dimensions}, 4th ed., Dublin, 1882.

\bibitem{SS} A.~Seigal and B.~Sturmfels:
{\em Real rank two geometry}, Journal of Algebra {\bf 484} (2017) 310--333. 

\bibitem{Stu} B.~Sturmfels: {\em The Hurwitz form of a projective variety},
 Journal of Symbolic Computation {\bf 79} (2017) 186--196. 

\bibitem{Thom} R.~Thom:
{\em Structural Stability and Morphogenesis}, W.A.~Benjamin, 1972.
    

\end{thebibliography}
\end{document}